\newtheoremstyle{fancy}{}{}{\itshape}{}{\textsc\bgroup}{.\egroup}{ }{}
\newtheoremstyle{fancy2}{}{}{\rm}{}{\textsc\bgroup}{.\egroup}{ }{}
\newcounter{intro}
\newtheorem{theo}[intro]{Theorem}
\newcounter{theorem}
\theoremstyle{fancy}
\newtheorem{cor}[theorem]{Corollary}
\newtheorem{lem}[theorem]{Lemma}
\newtheorem{thm}[theorem]{Theorem}
\newtheorem{sublem}{Sublemma}
\theoremstyle{fancy2}           
\newtheorem{rem}[theorem]{Remark}
\newcommand{\cref}[1]{Corollary~\ref{#1}}
\newcommand{\lref}[1]{Lemma~\ref{#1}}
\newcommand{\lrefs}[2]{Lemmas~\ref{#1} and~\ref{#2}}
\newcommand{\tref}[1]{Theorem~\ref{#1}}
\newcommand{\sref}[1]{Section~\ref{#1}}
\newcommand\C{\mathbb C}
\newcommand\I{\mathbb{I}}
\newcommand\N{\mathbb{N}}
\newcommand\R{\mathbb R}
\newcommand\Z{\mathbb{Z}}
\newcommand{\Sphere}{\mathbb S} 
\newcommand\mcV{\mathcal V}  \newcommand\mcC{\mathcal C}
\newcommand\mcU{\mathcal U}   \newcommand\mcL{\mathcal L}
  \newcommand\mcH{\mathcal H}
\newcommand\mcA{\mathcal A}  \newcommand\mcM{\mathcal M}
\newcommand{\eps}{\varepsilon} 
\newcommand{\e}{\mathrm e} 
\renewcommand{\phi}{\varphi}   
\newcommand{\im}{\mathrm i} 
\newcommand{\wt}{\widetilde}           
\newcommand {\qf}[1]{\mathfrak{#1}}    
\newcommand{\id}{\operatorname{id}}
\newcommand{\Ker}{\operatorname{Ker}}
\newcommand{\spec}{\operatorname{spec}}
\newcommand{\dom}{\operatorname{dom}}
\newcommand{\vol}{\operatorname{vol}}
\newcommand{\Sobsymb}[1][{}]  {\mathsf H^{#1}} 
\newcommand{\Contsymb}[1][{}] {\mathsf C^{#1}} 
\newcommand{\Lsymb}[1][{}]    {\mathsf L^{#1}} 
\newcommand{\Cont} [2][{}]{\Contsymb[#1] ({#2})} 
\newcommand{\Contc}[2][{}]{\Contsymb[#1]_{\mathrm c}({#2})}
\newcommand{\Ci} [1]    {\Cont[\infty]{#1}}      
\newcommand{\Ccic}[1]   {\Contc[\infty]{#1}}
\newcommand{\Lsqr}[1]{\Lsymb[2]({#1})} 
\newcommand{\Sob}[2][1]{\Sobsymb[#1] ({#2})} 
\newcommand{\Sobloc}[2][1]{\Sobsymb[#1]_{\mathrm{loc}}({#2})}
\newcommand{\bigSob}[2][1]{\Sobsymb[#1] \bigl({#2}\bigr)} 
\newcommand{\norm}[2][{}]{\|{#2}\|_{{#1}}}    
\newcommand{\iprod}[3][{}]{\langle{#2},{#3}\rangle_{#1}}  
\newcommand{\bigiprod}[3][{}]{\bigl\langle{#2},{#3}\bigr\rangle_{#1}}
\newcommand{\Bigiprod}[3][{}]{\Bigl\langle{#2},{#3}\Bigr\rangle_{#1}}
\newcommand{\set}[2]{\{ \, #1  \, ; \, #2 \, \} } 
\newcommand{\bigset}[2]{\bigl\{ \, #1 \, ; \, #2 \, \bigr\} }
\newcommand{\map}[3]{ #1 \colon #2 \longrightarrow #3 } 
\newcommand{\clo}[1]{\overline{{#1}}} 
\newcommand{\Neu}{{\mathrm N}}              
\newcommand{\Dir}{{\mathrm D}}              
\begin{document}
\title[Gaps in the differential forms spectrum on cyclic coverings]
{Gaps in the differential forms spectrum on cyclic coverings}
      
\date{\today}

\author{Colette ANN\'E} 
\address{Laboratoire de Math\'ematiques Jean
  Leray, Universit\'e de Nantes, CNRS, Facult\'e des Sciences, BP 92208,
  44322 Nantes, France} 
\email{anne@math.univ-nantes.fr}

\author{Gilles CARRON}
\address{Laboratoire de Math\'ematiques Jean
  Leray, Universit\'e de Nantes, CNRS, Facult\'e des Sciences, BP
  92208, 44322 Nantes, France}
\email{carron@math.univ-nantes.fr}

\author{Olaf POST}
\address{Institut f\"ur Mathematik,
         Humboldt-Universit\"at zu Berlin,
         Rudower Chaus\-see~25,
         12489 Berlin,
         Germany}
\email{post@math.hu-berlin.de}
\date{\today}

\begin{abstract}
  We are interested in the spectrum of the Hodge-de Rham operator on a
  $\Z$-covering $X$ over a compact manifold $M$ of dimension $n+1$. Let
  $\Sigma$ be a hypersurface in $M$ which does not disconnect $M$ and such 
  that $M-\Sigma$ is a fundamental domain of the covering. If the cohomology 
  group $H^{n/2}(\Sigma)$ is trivial, we can construct for each $N \in \N$ a 
  metric $g=g_N$ on $M$, such that the Hodge-de Rham operator on the covering 
  $(X,g)$ has at least $N$ gaps in its (essential) spectrum. If
  $H^{n/2}(\Sigma) \ne 0$, the same statement holds true for the
  Hodge-de Rham operators on $p$-forms provided $p \notin \{n/2,n/2+1\}$. 
\end{abstract}

\maketitle

\section{Introduction}
\label{sec:intro}
A common feature of periodic operators is its \emph{band-gap} nature
of the spectrum. It is natural to ask how we can create gaps between
the bands of the spectrum. Here we will extend the analysis done by
the third named author in~\cite{P} to the Hodge-de Rham operator on
\emph{forms}. However, there are topological obstructions for the
existence of gaps in the spectrum of the Hodge-de Rham operator. The
following \tref{thmA} is a direct consequence of
\cite[Theorem~0.1]{car}:
\begin{theo}
  \label{thmA}
  Let $(M^{4k+1},g)$ be a compact oriented Riemannian manifold. Assume
  that $\Sigma\subset M$ is an oriented hypersurface, with
  \emph{non-zero signature} and not disconnecting $M$. Let
  $\Z\rightarrow\wt M \rightarrow M$ be the cyclic covering associated
  to $\Sigma$, then for any complete Riemannian metric on $\wt M$
  (periodic or not) the spectrum of the Hodge-de Rham Laplacian on
  $\wt M$ is ${[}0,\infty{[}$.
\end{theo}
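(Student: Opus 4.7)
The plan is to deduce Theorem~\ref{thmA} directly from \cite[Theorem~0.1]{car}, which provides a topological criterion for the Hodge-de Rham Laplacian on a complete Riemannian manifold to have spectrum equal to $[0,\infty)$. That criterion, as I understand it, requires the manifold to contain a compact, oriented, codimension-one submanifold with non-zero signature that separates the ambient manifold into two non-compact connected pieces. So the task reduces to verifying these topological hypotheses for $\wt M$ and a suitable lift of $\Sigma$.

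First I would lift $\Sigma$. Since $\Sigma$ is non-separating in $M$ and $M-\Sigma$ is a fundamental domain of the $\Z$-covering, the preimage $\pi^{-1}(\Sigma)$ splits as a disjoint union $\bigsqcup_{n\in\Z}\Sigma_n$, where each $\Sigma_n$ is diffeomorphic to $\Sigma$ as an oriented manifold. In particular each $\Sigma_n$ is compact and has the same non-zero signature as $\Sigma$, so the signature hypothesis passes automatically to the covering.

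Next I would check the separation property of $\Sigma_0$ in $\wt M$. Combinatorially the covering looks like an infinite chain $\cdots|\Sigma_{-1}|F_{-1}|\Sigma_0|F_0|\Sigma_1|F_1|\cdots$, where each $F_n$ is a copy of the fundamental domain $M-\Sigma$ and consecutive copies are glued along translates of $\Sigma$. Removing $\Sigma_0$ then cleaves $\wt M$ into two connected components, each an infinite union of fundamental domains and therefore non-compact. (Note the mild subtlety: in $M$ the hypersurface $\Sigma$ is non-separating, whereas its lift $\Sigma_0$ in $\wt M$ is separating; this is exactly what one needs for Carron's criterion.)

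Finally, since $\wt M$ is complete by assumption and the above topological data are in place, \cite[Theorem~0.1]{car} applies and yields $\spec(\Delta)=[0,\infty)$ on $\wt M$, independently of the chosen complete metric. There is no genuine analytic obstacle here: the work of the proof consists entirely in lifting $\Sigma$ correctly and recognising that $\Sigma_0$ satisfies the hypotheses of Carron's theorem, the rest being a black-box citation.
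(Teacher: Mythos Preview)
Your proposal is correct and matches the paper's approach exactly: the paper simply states that Theorem~\ref{thmA} is a direct consequence of \cite[Theorem~0.1]{car}, and your argument spells out precisely how that citation applies, namely by lifting $\Sigma$ to a separating hypersurface $\Sigma_0$ in $\wt M$ with the same non-zero signature and two non-compact complementary pieces.
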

The result we present here has also a topological restriction:
\begin{theo}
  Assume that $\Sigma^n\subset M^{n+1}$ is a hypersurface in a compact
  manifold $M$ and assume that $\Sigma$ does not disconnect $M$. Let
  $\Z\rightarrow\wt M\rightarrow M$ be the cyclic covering associated
  to $\Sigma$.

  If $p\ne n/2$ and $p \ne n/2+1$, then there is a family of periodic
  Riemannian metrics $g_{\eps}$ on $\wt M$ such that the spectrum of
  the Hodge-de Rham Laplacian acting on $p$-forms has $N_\eps$ gaps
  with $\lim_{\eps\to 0}N_\eps=+\infty$.

  If $p=n/2$ or $p=n/2+1$, the same conclusion holds provided that the
  $(n/2)$-Betti number of $\Sigma$ vanishes, i.e.,
  $b_{n/2}(\Sigma)=0$.
\end{theo}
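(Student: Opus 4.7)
The plan is to adapt the third author's analysis of the scalar Laplacian in~\cite{P} to $p$-forms, combining Hodge theory on manifolds with thin necks with Floquet--Bloch decomposition.

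\emph{Construction of the metric.} I would fix a tubular neighborhood $U\cong \Sigma\times(-1,1)$ of $\Sigma$ in $M$ on which a reference metric is a product $g_\Sigma + dt^2$, and define $g_\eps$ on $M$ by replacing $g_\Sigma|_U$ by $\eps^2 g_\Sigma$, while on $M\setminus U$ one uses a (possibly $\eps$-dependent) metric chosen so that the absolute Hodge Laplacian on $M\setminus U$ has many well-separated eigenvalues in a bounded interval. Lifting to $\wt M$ yields a $\Z$-periodic metric resembling a chain of ``fat'' vertex pieces joined by thin necks of cross-section $\eps\Sigma$. The deck action decomposes $\Delta^\eps_p$ as a direct integral $\int^\oplus \Delta^\eps_{p,\theta}\,d\theta$ over the dual circle, so producing $N_\eps$ gaps in $\spec\Delta^\eps_p$ reduces to separating the bands $\theta\mapsto\lambda_k(\theta,\eps)$.

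\emph{Convergence to a finite-dimensional limit.} The analytic core will be to show that, as $\eps\to 0$, the finitely many low-lying eigenvalues of $\Delta^\eps_{p,\theta}$ converge, uniformly in $\theta$, to those of a finite-dimensional self-adjoint operator $L_p(\theta)$. A min-max argument should force bounded-energy $p$-forms to be transversally harmonic on the shrinking necks, so that only forms of type $\alpha$ and $\beta\wedge dt$ with $\alpha\in\mathcal H^p(\Sigma)$, $\beta\in\mathcal H^{p-1}(\Sigma)$ contribute along the necks, while each vertex piece contributes harmonic $p$-forms on $M\setminus\Sigma$ with absolute boundary conditions. Then $L_p(\theta)$ acts on a finite-dimensional space built from these three ingredients, with $\theta$ entering only through the equivariant gluing across $\Sigma$.

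\emph{Gap creation and the Betti number hypothesis.} Since $L_p(\theta)$ is finite-dimensional, it is a real-analytic family in $\theta$, and I would exploit the following mechanism: tuning the metric on $M\setminus U$ to provide $N$ well-separated eigenvalues below a fixed threshold, and taking the vertex pieces large compared to the coupling across $\Sigma$, makes the bands $\theta\mapsto\lambda_k(\theta,0)$ into short arcs clustered around those eigenvalues, opening $N$ gaps in $\spec L_p(\cdot)$ which persist for small $\eps$ by spectral stability. The degrees $p=n/2$ and $p=n/2+1$ are exactly those in which the middle cohomology $H^{n/2}(\Sigma)$ contributes to $\mathcal H^p(\Sigma)\oplus\mathcal H^{p-1}(\Sigma)$; the intersection form on $H^{n/2}(\Sigma)$ then feeds a block into $L_p(\theta)$ whose band is forced -- via the mechanism behind \tref{thmA} -- to cover a full half-line, obstructing gap creation. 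The hypothesis $b_{n/2}(\Sigma)=0$ removes this summand from the limit space.

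\emph{Main difficulty.} The delicate step is the uniform-in-$\theta$ spectral convergence: one has to simultaneously handle the $p$- and $(p-1)$-harmonic contributions on the necks, identify the correct $\theta$-twisted matching conditions at $\Sigma$ in the limit, and control the error independently of $\theta$. It is exactly here that the Betti number hypothesis intervenes, ensuring that the limit operator $L_p(\theta)$ has the structure required for the gap-opening mechanism above to apply.
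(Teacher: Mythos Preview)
Your overall strategy (Floquet decomposition, spectral convergence, band localization) matches the paper's, but both the limit operator and the gap mechanism are misidentified. The paper's metric is not a plain thin cylinder: it is two truncated cones of radius ranging from $1$ down to $\eps$, joined by a thin handle of radius $\eps$ and fixed length $L$. As $\eps\to 0$, $(M,g_\eps)$ converges to a manifold $\overline M$ with two conical singularities together with a segment $[0,L]$, and the limit operator on $p$-forms (\tref{thmC}) is a closed extension of the Hodge--de Rham operator on $\overline M$ plus Dirichlet/Neumann interval Laplacians of multiplicities $b_{p-1}(\Sigma)$, $b_p(\Sigma)$ --- an \emph{infinite}-dimensional operator with discrete spectrum; the metric on $M\setminus\mcV$ is fixed once and for all, and no tuning is used. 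The gaps then arise not because bands are short arcs around well-separated vertex eigenvalues, but because this limit spectrum is \emph{independent of $\theta$}: once the collar is pinched to two cone points, every loop in the regular part of $\overline M$ has intersection number zero with $\Sigma$, so the flat bundle $E_\theta$ trivializes there, and the interval carries plain Dirichlet/Neumann conditions with no $\theta$-twist. Hence each band $B_k^p(\eps)$ shrinks to the single point $\{\lambda_k^p(0)\}$ as $\eps\to 0$, and arbitrarily many gaps open.

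Your account of the obstruction at $p\in\{n/2,n/2+1\}$ also conflates two distinct phenomena. The intersection form (signature) of $\Sigma$ is what drives \tref{thmA}, which holds for \emph{all} metrics. The hypothesis $b_{n/2}(\Sigma)=0$ here is needed for a different reason: when $H^{n/2}(\Sigma)\ne 0$ and $p\in\{n/2,n/2+1\}$, the limit problem no longer decouples --- one obtains transmission conditions linking $\overline M$ to the segment (see \rref{nu=0} and~\eqref{eq:defn2}--\eqref{eq:defn2pu}) --- and the resulting coupled limit operator genuinely depends on the Floquet parameter, so the band-shrinking argument fails.
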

Our result is obtained through a convergence result of the
differential form spectrum which generalises the study of the first
author and B.~Colbois \cite{AC2}. The family of metrics $g_\eps$ is
defined on $M$ as follows: outside a collar neighbourhood of $\Sigma,$
the metric is independent of $\eps$ and on this collar neighbourhood of
$\Sigma$ the Riemannian manifold $(M,g_\eps)$ is isometric to the
union of two copies of the truncated cone $([\eps,1]\times \Sigma,
dr^2+r^2h),$ where $h$ is a fixed Riemannian metric on $\Sigma,$ and of a
thin handle $[0,L]\times \Sigma$ endowed with the Riemannian metric
$dr^2+\eps^2 h$.

\begin{figure}[h]
  \centering
  \begin{picture}(0,0)
    \includegraphics{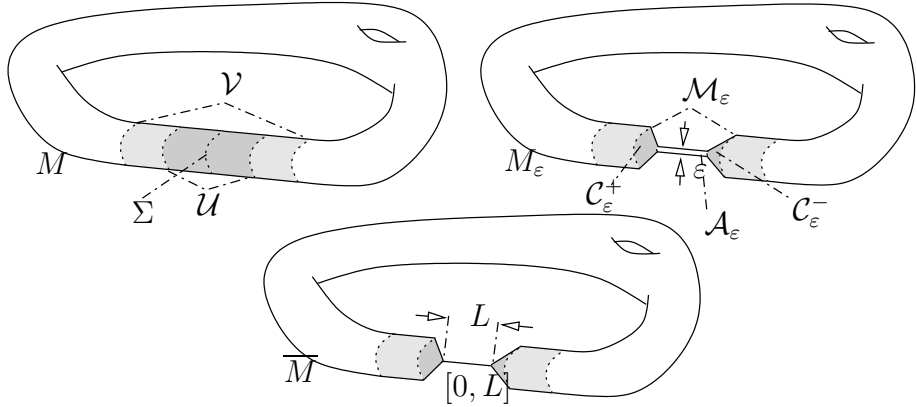}
  \end{picture}
  \setlength{\unitlength}{4144sp}
  \begin{picture}(5447,2407)(152,-1676) 
    \put(203,-310){$M$}
    \put(1310,174){$\mcV$}
    \put(1175,-553){$\mcU$}
    \put(781,-600){$\Sigma$}
    \put(3006,-289){$M_\eps$}
    \put(4071,138){$\mcM_\eps$} 
    \put(4140,-332){$\eps$}
    \put(4736,-581){$\mcC_\eps^-$}
    \put(4206,-669){$\mcA_\eps$}
    \put(3500,-487){$\mcC_\eps^+$} 
    \put(1689,-1552){$\overline M$}
    \put(2800,-1216){$L$}
    \put(2649,-1639){$[0,L]$}
  \end{picture}
  \caption{\small Construction of the manifold $M_\eps$ and the limit
    manifold $\overline M.$ We start with a manifold $M$ having
    product structure on $\mcU$. The cones on $M_\eps$ have length
    $1-\eps$, and the handle has length $L$ and radius $\eps$. The
    limit consists of the manifold $\overline M$ with two conical
    singularities, and the line segment $[0,L]$.}
  \label{fig:cone-mfd}
\end{figure}

Geometrically, the manifold $(M,g_\eps)$ is converging in the
Gromov-Hausdorff topology to the union of a manifold $(\overline M,
\overline g)$ with two conical singularities and of a segment of
length $L$ joining the two singularities.  On $(\overline M, \overline
g)$, the operator $D:=d+d^*$, a priori defined on the space of smooth
forms with support in the regular part of $\overline M$, is not
necessary essentially self adjoint. After the pioneering work~\cite{C}
of J.~Cheeger dealing with the Friedrichs extension $D_{\max}\circ
D_{\min}$, the closed extensions of $D$ have been studied carefully
(see for instance~\cite{BS}, \cite{L}, \cite{seeley:92}
and~\cite{HM}).

Denote by $\sigma^\Dir=\set{(\pi k/L)^2}{k = 1,2,\dots}$ the Dirichlet
spectrum of the Laplacian on functions on the interval $[0,L]$ and
similarly by $\sigma^\Neu := \sigma^\Dir \cup \{0\}$ the Neumann
spectrum.  Our main theorem is the following:
\begin{theo}
  \label{thmC} 
  Suppose, in the case when $n$ is even, that the cohomology group
  $H^{n/2}(\Sigma)=0$. The spectrum of the Hodge-de Rham operator
  acting on $p$-forms of the manifold $(M,g_\eps)$ converges to the
  spectrum $\sigma_p$ of the limit problem, where $\sigma_p$ is given
  as follows:
  \begin{description}
  \item[$\mathbf{p < (n+1)/2}$] The limit spectrum $\sigma_p$ is the
    union of the spectrum of the operator $D_{\max}\circ D_{\min}$ on
    $p$-forms on $\overline M$, the Neumann spectrum $\sigma^\Neu$
    with multiplicity $\dim H^{p-1}(\Sigma)$ and the Dirichlet
    spectrum $\sigma^\Dir$ with multiplicity $\dim H^{p}(\Sigma)$.
  \item[$\mathbf{p > (n+1)/2}$] The limit spectrum $\sigma_p$ is the
    union of the spectrum of the operator $D_{\max}\circ D_{\min}$ on
    $p$-forms on $\overline M$, the Neumann spectrum $\sigma^\Neu$
    with multiplicity $\dim H^{p}(\Sigma)$ and the Dirichlet spectrum
    $\sigma^\Dir$ with multiplicity $\dim H^{p-1}(\Sigma)$,
  \item[$\mathbf{p = (n+1)/2}$] The limit spectrum $\sigma_p$ is the
    union of the spectrum of the operator $D_{\min}\circ D_{\max}$
    on $p$-forms on $\overline M$, and the Neumann spectrum $\sigma^\Neu$
    with multiplicity $\dim H^{p}(\Sigma)\oplus\dim H^{p-1}(\Sigma).$ 
\end{description}
\end{theo}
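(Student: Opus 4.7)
I plan to attack \tref{thmC} through the min-max / quasi-mode machinery, exploiting the fact that $(M,g_\eps)$ naturally splits into three pieces on which the Hodge--de Rham operator can be analysed separately: the body $M\setminus\mcU$ where the metric is $\eps$-independent, the two truncated cones $\mcC_\eps^\pm=([\eps,1]\times\Sigma,\,dr^2+r^2 h)$, and the cylindrical handle $\mcA_\eps=([0,L]\times\Sigma,\,dr^2+\eps^2 h)$. On each cone the Hodge--de Rham operator diagonalises through the spectral resolution of the Hodge Laplacian on $(\Sigma,h)$ into Bessel-type radial ODEs, while on the handle the cross-sectional Laplacian $\Delta_{\eps^2 h}=\eps^{-2}\Delta_h$ has its first non-zero eigenvalue blowing up like $\eps^{-2}$; therefore in any fixed energy window only the cross-sectional harmonic modes contribute in the limit $\eps\to 0$.

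\textbf{Upper bound.} For every $\lambda\in\sigma_p$ I would produce quasi-modes on $(M,g_\eps)$ whose Rayleigh quotient is $\lambda+o(1)$, counted with the correct multiplicities. When $\lambda\in\spec(D_{\max}\circ D_{\min})$ (or $\spec(D_{\min}\circ D_{\max})$ in the middle degree) on $\overline M$, take the corresponding eigenform, truncate it near the conical tips with a logarithmic cut-off whose gradient contributes $o(1)$ to the energy, and extend it by zero through $\mcA_\eps$. When $\lambda\in\sigma^\Dir$ or $\sigma^\Neu$, take the corresponding eigenfunction $\psi$ of the $1$-dimensional Laplacian on $[0,L]$ together with a harmonic form $\omega$ on $(\Sigma,h)$ of the appropriate degree, form $\psi(r)\omega$ or $\psi(r)\,dr\wedge\omega$ on $\mcA_\eps$, normalise by the relevant power of $\eps$ (arising from the scaling $\|\omega\|^2_{g_\eps}=\eps^{n-2p}\|\omega\|^2_h$ of the induced norms), and extend smoothly into the cones using the harmonic solution of the radial Bessel ODE with the matching data at $r=\eps$.

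\textbf{Lower bound.} The reverse inclusion requires showing that no spectrum of the Hodge operator on $(M,g_\eps)$ escapes $\sigma_p$. Given a sequence of eigenforms $\alpha_\eps$ with eigenvalues bounded by some $\Lambda$, the spectral gap $\lambda_1(\Sigma)/\eps^2$ on the handle forces $\alpha_\eps|_{\mcA_\eps}$ to be $o(1)$-close in $L^2$ to a form of the type $\psi_\eps(r)\,\omega+dr\wedge\phi_\eps(r)\,\omega'$ with $\omega\in\mcH^p(\Sigma)$ and $\omega'\in\mcH^{p-1}(\Sigma)$, where the radial profiles $\psi_\eps,\phi_\eps$ satisfy the $1$-dimensional Laplace equation. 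On the body and cones, standard interior elliptic compactness yields (after extraction) a limit $\bar\alpha\in L^2(\overline M)$ lying in the domain of a closed self-adjoint extension of $(d+d^*)^2$. Integration by parts on $\mcA_\eps$ together with the matching of $\alpha_\eps$ across the junctions $\{r=\eps\}$ then translate, in the limit, into precisely the boundary conditions at the conical tips of $\overline M$ and at the endpoints of $[0,L]$ that characterise $\sigma_p$.

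\textbf{Identification of the extensions and main obstacle.} The precise assignment of Dirichlet or Neumann endpoint data to each harmonic class on $\Sigma$, together with the selection of the Friedrichs extension $D_{\max}\circ D_{\min}$ versus the alternative $D_{\min}\circ D_{\max}$, is governed by the asymptotics of the two radial modes on the cone: the $L^2$-weights $r^{n-2p}$ (for $\omega\in\mcH^p$) and $r^{n-2p+2}$ (for $dr\wedge\omega'$ with $\omega'\in\mcH^{p-1}$) dictate which cone extensions are admissible and how they couple through $d+d^*$. The critical values $p=n/2$ and $p=n/2+1$ are exactly those where these weights place the tangential or normal modes at the threshold of square-integrability, so that the Friedrichs and the alternative closed extension first cease to agree; the hypothesis $H^{n/2}(\Sigma)=0$ (when $n$ is even) removes precisely the harmonic classes responsible for non-trivial deficiency indices and thereby restores the clean three-case pattern of the theorem, the middle degree $p=(n+1)/2$ being the symmetric case where both harmonic types extend freely and the operator jumps to $D_{\min}\circ D_{\max}$. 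The main obstacle I anticipate is the uniform control of the quadratic forms through the degeneration---producing cut-off and extension operators that preserve the domains of the limiting closed extensions, and establishing quantitative trace inequalities at the junctions $\{r=\eps\}$---so that the separate upper and lower bounds assemble into a genuine convergence of the spectra in the spirit of \cite{AC2}, rather than mere pointwise limits of eigenvalues.
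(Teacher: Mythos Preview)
Your overall architecture---min-max in both directions, transplantation of limit eigenforms for the upper bound, extraction plus decomposition for the lower bound---is exactly what the paper does. The upper bound sketch is fine and matches the paper's transplantation constructions closely.

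There is, however, a genuine gap in your account of the middle degree $p=(n+1)/2$. You attribute the appearance of $D_{\min}\circ D_{\max}$ (instead of the Friedrichs extension $D_{\max}\circ D_{\min}$) to the \emph{harmonic} cross-sectional modes ``extending freely''. That is not the mechanism. The harmonic classes $\mcH^{p-1}(\Sigma)$ and $\mcH^p(\Sigma)$ are what populate the interval spectrum; they play no role in selecting the closed extension on $\overline M$. What actually forces $D_{\min}\circ D_{\max}$ is the \emph{non-harmonic} low-energy part: when $\Delta_\Sigma$ has exact $p$-form eigenvalues $\mu^2\in{]}0,1{[}$, the radial Bessel ODE on the cone has indicial root $\gamma_-(\mu^2)=\tfrac12-\mu\in{]}{-}\tfrac12,\tfrac12{[}$, and the corresponding singular solution $t^{-\gamma}G_\gamma$ lies in $\dom(D_{\max})\setminus\dom(D_{\min})$. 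The paper isolates exactly when this singular branch survives in the limit via an algebraic operator $N_{\gamma,\mu,p}=\gamma+\mu+(\tfrac n2-P)J$ acting on the transversal eigenspaces (their Sublemma): $N_{\gamma,\mu,p}$ is invertible---killing the singular branch---except precisely when $p=(n+1)/2$ and $\gamma=\gamma_-(\mu^2)$ with $\mu\in{]}0,1]$. Your sketch, which stops at ``spectral gap on the handle plus elliptic compactness on the cones'', does not see this; you would need the explicit Bessel expansion on $[\eps,1]$ combined with the first-order transmission conditions at $r=\eps$ to derive it.

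A smaller but related correction: the hypothesis $H^{n/2}(\Sigma)=0$ does not ``remove the harmonic classes responsible for non-trivial deficiency indices'' in the sense you suggest. Its role is that for $p\in\{n/2,\,n/2+1\}$ the harmonic component has $\nu=n/2-p+1\in\{0,1\}$; when $\nu=0$ the radial problem on the cone and the interval fail to decouple (the limit transmission conditions~\eqref{tran} couple $\overline M$ and $[0,L]$), so one no longer obtains a direct sum of a conical operator and an interval Laplacian. Assuming $H^{n/2}(\Sigma)=0$ simply eliminates this coupled harmonic channel. The deficiency-index language is misleading here because the obstruction is a coupling phenomenon, not a self-adjointness ambiguity on $\overline M$ alone.
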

\subsection*{Remarks.}  Our \tref{sansgap} gives also a convergence
result in the case when $n$ is even, the $(n/2)$-cohomology group of
$\Sigma$ is non-trivial and $p=n/2$ or $p=(n+1)/2$. In this case the
limit spectrum is obtained by a coupled problem between the manifold
$\overline M$ and the line segment. Consequently, the result of
\tref{sansgap} does not help for the determination of the spectrum on
the periodic manifold: The spectrum depends in fact on the spectral
flow (see~\cite[p.93]{APS3} for a definition) of the family of
operators defined by the Floquet parameter.

We remark also that the presence of the handle influences the
definition of the limit problem on the manifold $\overline M$, namely
in the case ${p = (n+1)/2}$ where in fact the operator $D_{\min}\circ
D_{\max}$ appears.  If the handle is not present (i.e.~$L=0$), the
index of the Gau\ss -Bonnet operator in this situation has been
studied by R.~Seeley in~\cite{seeley:92}, and the convergence of the
spectrum of the Hodge-de Rham operator acting on $p$-forms by
P.~Macdonald (\cite{Mac}), and next by R.~Mazzeo and J.~Rowlett
(\cite{Maz,Row}).  The result is that, {\em with the topological
  hypothesis $H^{n/2}(\Sigma)=0,$ this spectrum converges to the
  spectrum of the Friedrichs extension $D_{\max}\circ D_{\min}$ of the
  Hodge-de Rham operator on $\overline M$ for any degree $p$.}  This
fact can be recovered by our analysis.
  
Finally, our work has also an extension to the Dirac operator: there is an
analogue of \tref{thmA} due to J.~Roe for the Dirac operator
(\cite{Roe}). On the other  hand, if we consider a compact spin manifold 
$M^{n+1}$ and an oriented hypersurface $\Sigma$ with trivial 
$\widehat A$-invariant or trivial $\alpha$-invariant, then the recent work 
of B.~Ammann, M.~Dahl and E.~Humbert~\cite{ADH} provides a Riemannian metric 
$h$ on $\Sigma$ with no harmonic spinors. Then we can scale this metric so 
that its associated Dirac operator on $\Sigma$ has no eigenvalue in a large 
symmetric interval. Then  our construction also applies in this case, 
and gives, with J.~Roe's results, the following
\begin{theo}\label{thmD}
  Assume that $\Sigma^n\subset M^{n+1}$ is an oriented hypersurface in
  a compact spin manifold $M$, which does not disconnect $M$, and
  consider $\Z\rightarrow \wt M\rightarrow M,$ the associated cyclic
  covering.  Then there is a family $g_\eps$ of periodic Riemannian
  metrics on $\wt M$, whose Dirac operator has a large number of gaps
  in its spectrum {\em if and only if} $\widehat A (\Sigma)=0 ,$ in the case
  $n=4k,$ or $\alpha(\Sigma)=0,$ in the case $n=8k+1$ or $n=8k+2$.
\end{theo}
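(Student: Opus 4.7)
My plan is to establish the two implications in \tref{thmD} separately. For the \textbf{only if} direction, the work of J.~Roe~\cite{Roe} supplies the Dirac-operator analogue of \tref{thmA}: under the assumption that $\widehat A(\Sigma)\ne 0$ (in the case $n=4k$) or $\alpha(\Sigma)\ne 0$ (in the case $n=8k+1$ or $n=8k+2$), every complete Riemannian metric on $\wt M$---and in particular every periodic one---yields a Dirac operator whose spectrum has no gaps, ruling out the conclusion. The remaining work therefore lies in the converse direction.

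For the \textbf{if} direction I would mirror the construction used for the Hodge-de~Rham operator. By the theorem of Ammann, Dahl and Humbert~\cite{ADH}, vanishing of $\widehat A(\Sigma)$ or of $\alpha(\Sigma)$ in the relevant dimension guarantees a Riemannian metric $h_0$ on $\Sigma$ whose intrinsic Dirac operator $D_{\Sigma,h_0}$ has trivial kernel. A homothetic rescaling $h=\lambda^{-2}h_0$ multiplies the spectrum of $D_\Sigma$ by $\lambda$, so that one may choose $\lambda$ as large as needed to ensure that $D_\Sigma$ has no eigenvalue in a prescribed symmetric interval $(-\Lambda,\Lambda)$, where $\Lambda$ will be fixed below according to how many gaps one wishes to produce. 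With this $h$ fixed on $\Sigma$, I would equip $M$ with the family of metrics $g_\eps$ described in the paper: two truncated cones $[\eps,1]\times\Sigma$ attached to a thin cylindrical handle $[0,L]\times\Sigma$ of cross-sectional radius $\eps$.

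The main step is a Dirac analogue of \tref{thmC}, showing that on any fixed bounded window $(-\Lambda,\Lambda)$ the Dirac spectrum of $(M,g_\eps)$ converges, as $\eps\to 0$, to the union of (i)~the spectrum of the (essentially self-adjoint) Dirac operator on the limit space $(\overline M,\overline g)$ with two conical singularities, and (ii)~a contribution from the segment $[0,L]$ weighted by $\dim\ker D_\Sigma$. Since $\ker D_\Sigma=0$ by construction, the segment contributes nothing in the window, and the bulk Dirac operator on $\overline M$ has purely discrete spectrum inside $(-\Lambda,\Lambda)$. Applying this convergence fibrewise to the Floquet decomposition of the $\Z$-cover $\wt M\to M$ gives, for sufficiently small $\eps$, Bloch bands that cluster around the discrete eigenvalues of the limit bulk operator with wide gaps between them; letting $\Lambda$ grow then yields any prescribed number $N_\eps$ of gaps with $N_\eps\to\infty$.

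The \textbf{main obstacle} will be the careful justification of this spectral-convergence result for spinors. Two points have to be handled simultaneously. First, at each conical singularity of $\overline M$ the Dirac operator is essentially self-adjoint precisely when the cross-section $(\Sigma,h)$ has no Dirac eigenvalue in $(-1/2,1/2)$; choosing $\Lambda\ge 1/2$ in the rescaling ensures this, so the defect-index subtleties that forced the distinction $p\in\{n/2,n/2+1\}$ in the Hodge setting do not appear here. Second, one has to show by a separation-of-variables analysis on the thin handle, together with uniform cut-off and commutator estimates, that low-lying Dirac modes of $(M,g_\eps)$ cannot concentrate on the handle: the transverse eigenvalues there are all bounded below by $\Lambda$ thanks to the rescaling, so any mode with eigenvalue in $(-\Lambda,\Lambda)$ must decouple from the handle in the limit. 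The absence-of-harmonic-spinors hypothesis furnished by~\cite{ADH} is precisely what powers this decoupling.
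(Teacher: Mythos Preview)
Your proposal is correct and follows essentially the same route as the paper's proof: Roe's theorem for the necessity, Ammann--Dahl--Humbert plus rescaling to obtain a cross-sectional metric with $\spec D_\Sigma\cap(-\Lambda,\Lambda)=\emptyset$, then the same cone-plus-handle family $g_\eps$ and a Dirac analogue of the convergence result in which, because $A=A_0$ and the harmonic/low-energy components are absent, the high-energy estimate (\lref{lem:high.ev}) applies to the full eigenspinor and the handle contribution vanishes; the Floquet argument then produces the gaps. The paper's proof differs only in presentation, noting explicitly that the quadratic-form boundary term disappears and citing Chou~\cite{Ch} for the explicit form of the Dirac operator on the cone.
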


Recall that the spin cobordism $\alpha$-invariant satisfies
$\alpha(\Sigma)\in \Z /2\Z$. This last result can be compared with the
recent one of D.~Ruberman and N.~Saveliev. Indeed they prove in
\cite[Theorem 2]{RS} that, the Dirac operator on a cyclic covering
$\wt M \rightarrow M$ is invertible for a generic set of $\Z$-periodic
metric , if and only if $\alpha_{n+1}(M)=0$ and
$\alpha_{n}(\Sigma)=0.$ The topological invariant $\alpha_{n}(X)$ for
a closed manifold $X$ of dimension $n$ is defined as an elemant of
$KO_n$, and we have
  \begin{equation*}
    \alpha_n(X) =
    \begin{cases}
      \widehat A (X), & \text{if $n=8k$,}\\
      \widehat A (X)/2, & \text{if $n=8k+4$,}\\
      \alpha(X), & \text{if $n \in\{8k+1,8k+2\}$,}\\
      0, & \text{otherwise}.
    \end{cases}
  \end{equation*}
  They use also the construction of B.~Ammann, M.~Dahl and
  E.~Humbert~\cite{ADH}.  In particular, the results of Ruberman and
  Saveliev imply that generically, the first band of the spectrum of
  the Dirac operator does not touch $0$; it is not a result about the
  presence of many gaps in the spectrum.

  It is tempting to ask whether an equivalence as in \tref{thmD} also
holds for the Hodge-de Rham operator, but we have no guess about the
validity of such an extension. We think that it is an interesting
question and we intend to consider this question in a future work.

The paper is organised as follows: In the next section, we fix the
geometric setting for the quotient manifold $M$, namely the family of
metrics $g_\eps$. In \sref{sec:lap} we describe the Hodge-de Rham
operator in natural coordinates on the collar neighbourhood of
$\Sigma$. In \sref{sec:asymp} we provide basic estimates on a sequence
of eigenforms used in the main convergence result, which will be
presented in \sref{sec:limit}. In \sref{sec:covering} we deduce the
existence of spectral gaps and in \sref{sec:harm.forms} we discuss
the possible appearance of small eigenvalues in the setting of
\tref{thmC}.
\subsection*{Acknowledgements}
This work began with a one month visit of O.~Post at the University of
Nantes; O.~Post would like to thank for this invitation. G. Carron
thanks R. Mazzeo for useful discussions. We thank the referee for
drawing our attention to the work of Ruberman and Saveliev~\cite{RS}.
\section{The geometric set-up }
\label{sec:geo}

In this section, we explain the construction of the deforming family
of metrics~$g_\eps$. We assume that $M$ is a compact manifold of
dimension $n+1$ and that $\Sigma$ is a compact hypersurface in $M$
which does not disconnect the manifold (note that this hypothesis,
needed for the construction of a \emph{connected} periodic manifold,
does not play any role in the proof of the preliminary \tref{thmC}).
We choose a metric $g$ on $M$ such that there exists a collar
neighbourhood $\mcU={]}{-}2,2{[} \times \Sigma$ of $\Sigma$ where $g$
is of the form $dt^2+h$ for a (fixed) metric $h$ on $\Sigma$.

For $\eps \in {]}0,1]$, we construct a family of continuous, piecewise
smooth metrics $g_\eps$ on $M$ as follows:
\begin{itemize}
\item Outside the collar neighbourhood $\mcV:={]}-1,1{[} \times \Sigma
  \subset \mcU$, we do not change the metric, i.e. $g_\eps=g$ on $M\setminus \mcV$.
\item On the collar neighbourhood $\mcV$, the metric is chosen in such a way that
  $(\mcV,g_\eps)$ is isometric to the union $\mcM_\eps = \mcC_\eps^-
  \cup \mcA_\eps\cup \mcC_\eps^+$, where $\mcC_\eps^\pm$ are cones
  ${]}\eps,1{[} \times \Sigma$ endowed with the metric $dt^2+t^2 h$
  and with distinct orientation, and where $\mcA_\eps$ is the handle
  ${]}0,L{[} \times \Sigma$ endowed with the metric $dt^2+\eps^2 h$.
  Using a coordinate $\tau$ on all three parts, $(\mcM_\eps,g_\eps)$ is
  isometric to ${]}{-}(L/2+1-\eps), L/2+1-\eps{[} \times \Sigma$,
  endowed with the warped product metric $d\tau^2+\rho_\eps(\tau)^2 h$,
  where
  \begin{equation*}
    \rho_\eps(\tau)=
    \begin{cases}
      \eps          & \text{if $|\tau|\le L/2$}\\
      |\tau|-L/2+\eps  & \text{if $|\tau|\ge  L/2$.}
    \end{cases}
 \end{equation*}
\end{itemize}
We denote by $M_\eps$ the new Riemannian manifold. We are interested
in studying the limit, as $\eps \to 0$, of the spectrum
$\lambda_k^p(\eps)=\lambda_k^p(g_\eps)$, $k\geq 1$, of the Hodge-de
Rham operator acting on $p$-forms defined on the manifold $M_\eps$. We
remark that $g_\eps$ is only continuous. 

The Hodge-de Rham operator is defined in this case as follows
(see~\cite{AC2} for more details).  The manifold is the union of two
smooth parts with boundary. For a manifold $M=M_1\cup M_2$, denote by
$D_1,D_2$ the Gau\ss-Bonnet operator on each part. The quadratic form
$q(\phi)=\int_{M_1}|D_1(\phi{\restriction}_{M_1})|^2 +
\int_{M_2}|D_2(\phi{\restriction}_{M_2})|^2$ is well defined and
closed on the domain
  \begin{equation*}
    \dom(q) =\bigset{\phi=(\phi_1,\phi_2) \in \Sob{M_1} \times \Sob{M_2}}
                { \phi_1{\restriction}_{\partial M_1} 
                    \stackrel{\Lsymb_2} = 
                  \phi_2{\restriction}_{\partial M_2} },
  \end{equation*}
  and on this space the total Gau\ss-Bonnet operator is defined and
  selfadjoint.  The Hodge-de Rham operator of $M$ is then defined as
  the operator obtained by the polarization of the quadratic form $q$.
  This gives compatibility conditions between $\phi_1$ and $\phi_2$ on
  the commun boundary, these conditions are explained in detail in the
  next section.

  Finally we remark that it is not a loss of generality to concider
  only continuous metrics: the family $(g_\eps)_{\eps>0}$ can be
  approched by a family of smooth Riemannian metrics
  $(g_{\eps,\eta})_{\eps>0}$ such that
  \begin{equation}
    \label{eq:met.est}
    \e^{-\eta} g_\eps \le g_{\eps,\eta}\le \e^\eta g_\eps
  \end{equation}
   for all $\eps$.
\begin{proof}
  Let $f_\eta$ be a smooth, increasing function on $\R^+$ such that
 \begin{equation*}
    f_\eta(r)=1 \qquad \text{for $r \leq 1$} 
        \qquad \text{and} \qquad 
    f_\eta(r)=r \qquad \text{for $r\geq 1+\eta$}.
 \end{equation*}
 Then the metric $g_{\eps,\eta}=d\tau^2+f_{\eps,\eta}(\tau)^2 h$ on
 $\mcM_\eps$ with
 $f_{\eps,\eta}$ defined by
 \begin{equation*}
    f_{\eps,\eta}(\tau)=
     \begin{cases}
       \eps & \text{if $|\tau|\leq L/2$}, \\
       \eps f_\eta \Bigl(\frac{|\tau|-L/2+\eps}{\eps} \Bigr)
            & \text{if $|\tau|\geq L/2$}
   \end{cases}
 \end{equation*}
 satisfies the estimate~\eqref{eq:met.est}.
\end{proof}
A result of Dodziuk~\cite[Prop.~3.3]{D} implies now, that the
corresponding eigenvalues satisfy
\begin{equation*}
  \e^{-(n+2p)\eta} \lambda_k^p(g_\eps) \le 
  \lambda_k^p(g_{\eps,\eta}) \le 
  \e^{(n+2p)\eta} \lambda_k^p(g_\eps).
\end{equation*}
Note that the result of Dodziuk also applies to our singular metrics,
based on the Hodge decomposition and the fact that the spectrum away
from $0$ is given by exact forms.  Hence, it is enough to prove our
results only for a family of continuous (but piecewise smooth)
metrics, and the convergence results extend also to a family of smooth
metrics. The above definition of a family of non-smooth metrics will
simplify some of our arguments in the next section. Namely, we can
solve certain differential equations explicitly due to the special
form of the metric on the cones and the handle.

\section{Description of the Hodge-de Rham operator on $\mcM_\eps$}
\label{sec:lap}

In this section we express the norm of a $p$-form, the Gau\ss -Bonnet,
the Hodge-de Rham operator and its associated quadratic form in the
new coordinates.  On the cones $\mcC_\eps^\pm$, we use the same
parametrisation of the forms as the one introduced in \cite{BS} and
\cite{BMS}, namely a $p$-form $\phi$ can be written as
\begin{equation*}
  \phi = 
  dt \wedge t^{-(n/2-p+1)}\beta_\pm + 
              t^{-(n/2-p)}\alpha_\pm
\end{equation*}
and we set
\begin{equation*}
  U_\pm \phi := 
  \sigma_\pm := (\beta_\pm,\alpha_\pm)
  \in \Ci { {]}\eps, 1{[}, 
        \Ci{\Lambda^{p-1}T^*\Sigma} \oplus \Ci{\Lambda^p T^*\Sigma}}.
\end{equation*}

Similarly, on the handle, we have
\begin{equation*}
 \phi =
 dr \wedge \eps^{-(n/2-p+1)} \beta +
              \eps^{-(n/2-p)}\alpha 
\end{equation*}
and we set
\begin{equation*}
  U \phi :=\sigma:=(\beta,\alpha)
  \in \Ci { {]}0,L{[}, 
        \Ci{\Lambda^{p-1}T^*\Sigma} \oplus \Ci{\Lambda^p T^*\Sigma}}.
\end{equation*}
Since we included the factor $\rho_\eps$ of the (warped) product
$g_\eps = dt^2 + \rho_\eps(t)^2 h$ in the definition of the
transformation, it is now straightforward to see that $U_\pm$ extends
to a unitary operator on the corresponding $\Lsymb[2]$-spaces
and similarly for $U$. In particular, we have
\begin{multline}
  \label{eq:L2conser}
  \| \phi \|^2_{\Lsqr{\Lambda^\bullet T^* \mcM_\eps, g_\eps}} = 
  \int_{\mcM_\eps} |\phi|_{g_\eps}^2 d\vol_{g_\eps} \\= 
  \sum_{s=\pm} \int_\eps^1 \bigl[ 
        |\beta_s(t)|^2 + |\alpha_s(t)|^2 \bigr] dt
    + \int_0^L \bigl[ |\beta(t)|^2 + |\alpha(t)|^2 \bigr] dt,
\end{multline}
where $|\cdot|$ denotes the $\Lsymb[2]$-norm on $\Lsqr{\Lambda^\bullet
  T^* \Sigma,h}$.

We can now transform the \emph{Gau\ss -Bonnet operator} $D:=d+d^\ast$,
which in fact depends on $\eps$ as the metric does, using the
transformations $U_\pm$ resp.~$U$ and obtain
\begin{align*}
  UDU^*=&\begin{pmatrix} 0&1\\
               -1&0   
        \end{pmatrix}
       \Big( \partial_t+\frac 1 \eps 
                 \begin{pmatrix} 0&-D_0\\
                        -D_0&0\end{pmatrix} \Big)& \text{on the handle
                        $\mcA_\eps$ and}\\
  U_\pm DU_\pm^*=&\begin{pmatrix} 0&1\\
               -1&0   
        \end{pmatrix}
       \Big( \partial_t+\frac 1 t
                 \begin{pmatrix} \dfrac n 2 -P&-D_0\\
                -D_0 & P-\dfrac n 2 \end{pmatrix} \Big)
        &\text{on the cones $\mcC_\eps^\pm$,}
\end{align*}
where $D_0=d_0+d_0^\ast$ denotes the Gau\ss -Bonnet operator on the
compact Riemannian manifold $(\Sigma,h)$ and where $P$ is the linear
operator multiplying with the degree of the form.  For further
purposes, it will be useful to denote
\begin{equation*}\label{opeA}
  A_0 = \begin{pmatrix} 0&-D_0\\
                -D_0&0\end{pmatrix} \qquad\text{and}\qquad
  A   = \begin{pmatrix} \dfrac n 2 -P&-D_0\\
                -D_0 & P - \dfrac{n}{2} \end{pmatrix}
\end{equation*}
the parts, in the transformed Gau\ss -Bonnet operators, acting
non-trivially in the transversal direction $\Sigma$.

In this representation, a piecewise smooth form $\phi$ is in the
domain of $D$ if and only if $\phi$ is in the $\Sobsymb[1]$-space of
each part and if the components of $U\phi$ and $U_\pm\phi$ satisfy
the \emph{compatibility} or \emph{transmission conditions}
\begin{equation}
  \label{recolfq}
  \begin{cases}
    \beta(L)=\beta_+(\eps),& \quad \beta(0)=-\beta_-(\eps)\\
    \alpha(L)=\alpha_+(\eps),& \quad \alpha(0)=\alpha_-(\eps).
  \end{cases}
\end{equation}
The \emph{Hodge-de Rham operator} is now given by $D^2$. A simple
calculation shows that on the handle $\mcA_\eps$, we have
\begin{equation*}
  UD^2U^* = -\partial_t^2+\frac{1}{\eps^2}A_0^2
  =-\partial_t^2+\frac{1}{\eps^2}\begin{pmatrix} \Delta_\Sigma&0\\0&
 \Delta_\Sigma\end{pmatrix},\end{equation*}
 where $\Delta_\Sigma=D_0^2$ denotes the Hodge-de
Rham operator of the Riemannian manifold $(\Sigma,h)$. Similarly, on
the cones $\mcC_\eps^\pm$ we have the expression
\begin{equation*}
  U_\pm D^2U_\pm^* = -\partial_t^2+\frac{1}{t^2}(A+A^2),
\end{equation*}
where
\begin{equation}
  \label{opcone}
  A + A^2 =
  \begin{pmatrix} 
     \Delta_\Sigma + \Bigl(\dfrac{n}{2}-P\Bigr) \Bigl(\dfrac{n}{2}-P+1\Bigr)
          &  -2d_0^\ast\\
     -2d_0&\Delta_\Sigma + \Bigl(\dfrac{n}{2}-P \Bigr)
                                   \Bigl(\dfrac{n}{2}-P-1\Bigr)
   \end{pmatrix}.
\end{equation}
The domain of $D^2$ consists of those forms $\phi$ in the domain of
$D$ such that $D\phi$ is also in the domain of $D$. In particular, the
domain of the transformed operator $UD^2U^*$ consists of pairs of
forms satisfying --- in addition to~\eqref{recolfq} --- the following
\emph{compatibility} or \emph{transmission conditions} (of first
order) on the derivatives:
\begin{subequations}
  \label{recolop}
  \begin{align}
      \beta'(L) &= \beta_+'(\eps) +\dfrac{1}{\eps} \Bigl(\dfrac{n}{2}-P
      \Bigr) \beta_+(\eps), &  \beta'(0) &=
      \beta_-'(\eps)+\dfrac{1}{\eps}
      \Bigl(\dfrac{n}{2}-P \Bigr)\beta_-(\eps)\\
      \alpha'(L) &= \alpha_+'(\eps)- \frac{1}{\eps} \Bigl(
      \dfrac{n}{2}-P \Bigr)\alpha_+(\eps), &  \alpha'(0) &=
      -\alpha_-'(\eps)+\dfrac{1}{\eps} \Bigl(\dfrac{n}{2}-P
      \Bigr)\alpha_-(\eps).
  \end{align}
\end{subequations}
Let us now compute the expression
\begin{equation*}
  \| D\phi \|^2_{\Lsqr{\Lambda^\bullet T^*\mcM_\eps,g_\eps}} 
  = \int_{\mcM_\eps} |D\phi|_{g_\eps}^2 d \vol_{g_\eps},
\end{equation*}
i.e., the quadratic form on $\mcM_\eps$, for a form $\phi$ in the
domain of $D$, and with support in $\mcM_\eps$ in terms of
\begin{equation*}
  \sigma_\pm = \begin{pmatrix}\beta_\pm\\ \alpha_\pm \end{pmatrix} 
             = U_\pm \phi \qquad \text{and} \qquad
  \sigma     = \begin{pmatrix}\beta\\ \alpha \end{pmatrix}=U\phi
\end{equation*}
using the isometries $U_\pm$ and $U$.

Denote by $\iprod \cdot \cdot$ the scalar product in $\Lsqr
{\Lambda^\bullet T^*\Sigma,h} \oplus \Lsqr {\Lambda^\bullet
  T^*\Sigma,h}$ and by $\mcC_\eps$ one of the two cones, oriented by
$dt \wedge d\vol_\Sigma$. The expression of the transformed quadratic
form on the cone is then
\begin{align*}
  \int_{\mcC_\eps} |D\phi|_{g_\eps}^2 d \vol_{g_\eps}
    &= \int_\eps^1 
     \left| \Bigl(\partial_t + \frac{1}{t} A \Bigr) 
            \sigma_\pm\right|^2 dt\\
    &= \int_\eps^1
     \Bigl[\, |\sigma_\pm'|^2+\frac{2}{t} \iprod {\sigma_\pm'} {A\sigma_\pm}
             +\frac{1}{t^2} |A\sigma_\pm|^2\,\Bigr] dt\\
    &= \int_\eps^1
     \Bigl[\, |\sigma_\pm'|^2+\partial_t 
           \Bigl( \frac{1}{t} \iprod {\sigma_\pm} {A\sigma_\pm} \Bigr)
       + \frac{1}{t^2} \bigl( \iprod {\sigma_\pm} {A\sigma_\pm} 
       + |A\sigma_\pm|^2 \bigr)\,\Bigr] dt\\
    &= \int_\eps^1
     \Bigl[\, |\sigma_\pm'|^2+\frac{1}{t^2}
              \iprod{\sigma_\pm} {(A+A^2)\sigma_\pm} \,\Bigr] dt
       - \frac{1}{\eps} \bigiprod {\sigma_\pm(\eps)}{A\sigma_\pm(\eps)}.
\intertext{Similarly, on the handle we have}
  \int_{\mcA_\eps} |D \phi|_{g_\eps}^2 d \vol_{g_\eps}
    &= \int_0^L 
     \left| \Bigl(\partial_t + \frac{1}{\eps} A_0 \Bigr)
            \sigma \right|^2 dt \\
    &= \int_0^L
     \Bigl[\, |\sigma'|^2 + \frac{1}{\eps^2} |A_0\sigma|^2 +
        \frac{2}{\eps} \iprod {\sigma'} {A_0\sigma} \,\Bigr]dt\\
    &= \int_0^L
     \Bigl[\, |\sigma'|^2 + \frac{1}{\eps^2} |A_0\sigma|^2 \,\Bigr]dt\\
      &\hspace{3cm} 
         + \frac{1}{\eps}
             \Bigl( \bigiprod{\sigma(L)} {A_0\sigma(L)} -
                    \bigiprod{\sigma(0)} {A_0\sigma(0)} \Bigl).
\end{align*}
The total boundary term is
\begin{multline*}
  \qf b(\phi,\phi) =
  \Bigl(- \bigiprod {\sigma_+(\eps)} {A \sigma_+(\eps)} -
          \bigiprod {\sigma_-(\eps)} {A \sigma_-(\eps)} \\
        + \bigiprod {\sigma(L)} {A_0 \sigma(L)} -
          \bigiprod {\sigma(0)} {A_0 \sigma(0)}\Bigr).
\end{multline*}
Using the compatibility conditions~\eqref{recolfq} and the relation
\begin{equation*}
  A = A_0 - \begin{pmatrix} P-\frac{n}{2}&0\\
                0&\frac{n}{2}-P\end{pmatrix},
\end{equation*}
we obtain for the boundary term $\qf b(\phi,\phi)$ the following expression
\begin{equation*}
  \qf b (\phi,\phi) 
   =\sum_{s=\pm} \Bigiprod {\sigma_s(\eps)} 
         {\begin{pmatrix} P-\frac{n}{2}&0\\
                      0&\frac{n}{2}-P\end{pmatrix}\sigma_s(\eps)},
\end{equation*}
which does not contain derivatives any more.  Finally, we can express
the quadratic form associated to the Hodge-de Rham operator on
$\mcM_\eps$ as
\begin{multline}
  \label{locform}
  \int_{\mcM_\eps} |D\phi|^2 d\vol_{g_\eps} = \sum_{s=\pm}
  \int_\eps^1 \Bigl( |\sigma'_s|^2 +\frac{1}{t^2}
  \iprod {\sigma_s} {(A+A^2) \sigma_s} \Bigr) dt\\
+ \int_0^L
    \Bigl( |\sigma'|^2 + \frac{1}{\eps^2}
           |A_0 \sigma |^2 \Bigr) dt  + \frac{1}{\eps} \qf b(\phi,\phi)
\end{multline}
for $p-$forms supported in $\mcM_\eps$.

\section{Asymptotic estimates}
\label{sec:asymp}
\subsection{Spectrum of the operator $A+A^2$}\label{specA}
The expression of $A+A^2$ was given in formula~\eqref{opcone}.  We
remark that the function
\begin{equation*}
  f(p)= \Bigl( \frac n 2 - p \Bigr) \Bigl( \frac n 2 - p - 1 \Bigr)
\end{equation*}
has zeros for the
values $n/2$ and $n/2-1$. In particular, for $p \in \N$ we have always
$f(p) \geq 0$ if $n$ is even and $f(p) \geq - 1/4$ if $n$ is odd. The
value $-1/4$ is obtained only for $p=(n-1)/2$. Setting
\begin{equation*}
  a_p:= \frac{n+1}{2}-p
\end{equation*}
we have the relation $f(p)=a_{p+1}^2- 1/4$.

The following lemma is a direct consequence of the Hodge decomposition
theorem for the compact manifold $(\Sigma,h)$ and the expression of
the operator $A+A^2$ on each of the subspaces given in the lemma:
\begin{lem}
  \label{decompos}
  The space $\Lsqr {\Lambda^{p-1}\Sigma} \oplus \Lsqr {\Lambda^p
    \Sigma}$ is the orthonormal sum of the following five spaces, and
  $A+A^2$ acts on these spaces as indicated:
  \begin{align*}
    \mcH_1 &= \{(\beta,0);\,\Delta_\Sigma \beta =0\,\},
        & (A+A^2)(\beta,0)  &= (f(p-2)\beta,0),\\
    \mcH_2 &=\{(0,\alpha);\,\Delta_\Sigma \alpha =0\}, 
        & (A+A^2)(0,\alpha) &= (0,f(p)\alpha),\\
     \mcH_3&=\{(\beta,0);\, \text{$\beta$ exact}\,\}, 
        & (A+A^2)(\beta,0)  &= 
             \bigl((\Delta_\Sigma+f(p-2))\beta,0\bigr),\\
     \mcH_4&=\{(0,\alpha);\, \text{$\alpha$ co-exact}\,\}, 
        & (A+A^2)(0,\alpha) &=
            \bigl(0,(\Delta_\Sigma+f(p))\alpha\bigr),\\
     \mcH_5&=\{(\beta,\alpha);\, 
          \text{$\beta$ co-exact, $\alpha$ exact} \, \},
        \hspace*{-1ex}&  (A+A^2)(\beta,\alpha) &=\\
      &&&  \hspace*{-25ex} 
         = \bigl((\Delta_\Sigma+f(p-2))\beta - 2d_0^\ast\alpha,
                  (\Delta_\Sigma+f(p))\alpha-2d_0\beta \bigr).
  \end{align*}
  In addition, this decomposition is preserved by $A+A^2$ and $A_0^2$.
\end{lem}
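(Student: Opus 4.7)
The plan is to deduce the decomposition from the Hodge decomposition on $(\Sigma,h)$ applied in two degrees, and then to verify the action of $A+A^2$ summand by summand using the matrix expression~\eqref{opcone}.

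First I would recall the Hodge decomposition on the compact manifold $(\Sigma,h)$: for every $k$ the space $\Lsqr{\Lambda^k T^*\Sigma}$ splits as an orthogonal sum of harmonic forms, the closure of the exact forms $d_0\Omega^{k-1}$, and the closure of the co-exact forms $d_0^\ast\Omega^{k+1}$, and each of these three summands is preserved by $\Delta_\Sigma$. Applied at degrees $p-1$ for the $\beta$-slot and $p$ for the $\alpha$-slot, this yields an orthogonal decomposition of $\Lsqr{\Lambda^{p-1}T^*\Sigma}\oplus\Lsqr{\Lambda^pT^*\Sigma}$ into nine mutually orthogonal blocks. I would then check by inspection that the five subspaces $\mcH_1,\dots,\mcH_5$ regroup these nine blocks without overlap: $\mcH_1,\mcH_2$ collect the two harmonic components, $\mcH_3,\mcH_4$ collect respectively the exact $\beta$-part (with $\alpha=0$) and the co-exact $\alpha$-part (with $\beta=0$), and $\mcH_5$ combines the co-exact $\beta$-part with the exact $\alpha$-part. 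Orthogonality between the $\mcH_i$ is inherited from the Hodge orthogonality.

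Next I would check the formula for $A+A^2$ on each $\mcH_i$, using~\eqref{opcone}. On $\mcH_1$ and $\mcH_2$ both $d_0$ and $d_0^\ast$ annihilate the argument and $\Delta_\Sigma$ acts as zero, leaving only the multiplicative constants $f(p-2)$ and $f(p)$. On $\mcH_3$ one has $d_0\beta=0$ (since $\beta$ is exact and $d_0^2=0$), while $\Delta_\Sigma\beta$ stays exact because $\Delta_\Sigma$ preserves the Hodge components; this gives $(A+A^2)(\beta,0)=((\Delta_\Sigma+f(p-2))\beta,0)\in\mcH_3$. The argument for $\mcH_4$ is symmetric, using $d_0^\ast\alpha=0$ for co-exact $\alpha$. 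For $\mcH_5$ the key observation is that if $\beta=d_0^\ast\xi$ and $\alpha=d_0\eta$, then $d_0\beta=d_0 d_0^\ast\xi$ is exact and $d_0^\ast\alpha=d_0^\ast d_0\eta$ is co-exact, so the off-diagonal entries of~\eqref{opcone} map $\mcH_5$ into itself, and the diagonal entries preserve the Hodge types of each component; hence $\mcH_5$ is stable and the action is as stated.

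Finally, $A_0^2$ equals the diagonal operator $\mathrm{diag}(\Delta_\Sigma,\Delta_\Sigma)$, and since $\Delta_\Sigma$ preserves each Hodge summand on $\Sigma$ at every degree, each $\mcH_i$ is automatically invariant under $A_0^2$. I do not expect a serious obstacle here; the only step requiring care is the combinatorial check that the five listed subspaces really exhaust the nine blocks of the product Hodge decomposition, and in particular that $\mcH_5$ is correctly identified as the subspace on which the off-diagonal terms of $A+A^2$ are non-trivial.
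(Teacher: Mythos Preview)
Your approach is exactly the paper's: invoke the Hodge decomposition on $(\Sigma,h)$ in each factor and then read off the action of $A+A^2$ from the matrix expression~\eqref{opcone}. One small slip to fix: since the space is a \emph{direct sum} $\Lsqr{\Lambda^{p-1}T^*\Sigma}\oplus\Lsqr{\Lambda^pT^*\Sigma}$, applying the three-term Hodge decomposition in each summand yields six orthogonal blocks, not nine; your regrouping then reads $\mcH_1,\mcH_2,\mcH_3,\mcH_4$ as one block each and $\mcH_5$ as the sum of the remaining two (co-exact $\beta$ paired with exact $\alpha$), which is exactly what you describe and does not affect the argument.
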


We can now compute explicitly the eigenvalues of the operator $A^2+A$
in terms of the spectrum of the Hodge-de Rham operator
on $\Sigma$.  Clearly, on the spaces $\mcH_i$, $i=1,\dots,4$, the
operator $A^2+A$ is already diagonalised provided $\alpha$ and $\beta$
are eigenforms of $\Delta_\Sigma$.

If $(\beta,\alpha) \in \mcH_5$ is an eigenvector of $A+A^2$ for the
eigenvalue $\lambda$ then they satisfy the equations
\begin{align}
  \label{eq1}
  \bigl( \Delta_\Sigma + f(p-2) - \lambda \bigr) \beta 
   &= 2 d_0^\ast \alpha\\
  \label{eq2}
  \bigl( \Delta_\Sigma + f(p)   - \lambda \bigr) \alpha
   &= 2d_0\beta.
\end{align}
Applying $d_0$ to the first and
$\bigl(\Delta_\Sigma+f(p-2)-\lambda\bigr)$ to the second equation, and
substituting the $\beta$ term, leads to the equation
\begin{equation}
  \label{eq3}
  4\Delta_\Sigma\alpha 
  = \bigl( \Delta_\Sigma + f(p-2) - \lambda \bigr)
    \bigl( \Delta_\Sigma + f(p)   - \lambda \bigr) \alpha
\end{equation}
for $\alpha$. If $\alpha$ is an exact eigenform with
$\Delta_\Sigma\alpha=\mu^2\alpha$ then $\lambda$ is a solution of the
second order polynomial equation
\begin{equation}
  \label{eq4}
  4\mu^2 = \bigl( \mu^2 + f(p-2) - \lambda \bigr) 
           \bigl( \mu^2 + f(p)   - \lambda \bigr).
\end{equation}
A direct computation shows that the solutions of this equation
are
\begin{equation*}
  \lambda_\pm(\mu^2)=\gamma_\pm(\mu^2)(\gamma_\pm(\mu^2)+1)
\end{equation*}
where
\begin{equation}
  \label{gam5}
  \gamma_\pm(\mu^2) = -\frac12 +\Bigl|\sqrt{\mu^2+a_p^2}\pm 1\Bigr|.
\end{equation}

Now if $(\alpha_k)_{k\in \N}$ is a complete family of exact eigenforms
with corresponding eigenvalues $(\mu^2_k)_{k\in \N}$, then it is easily
seen that the family
\begin{equation}
  \label{h5.ev}
  \Bigl( \frac{2} {{\mu_k}^2+f(p-2)-\lambda_s(\mu_k^2)} d^\ast_0\alpha_k,
     \alpha_k \Bigr)_{k\in \N, s=\pm}
\end{equation}
defines a basis of $\mcH_5$ of eigenvectors of $A+A^2$ with
corresponding eigenvalues $\{\lambda_s(\mu_k^2)\}_{k\in \N,s=\pm}$.

For further purpose, it will be very convenient to write the
eigenvalues of $A+A^2$ in the form $\gamma(\gamma+1)$, as we have
already done in the above calculation of the spectrum of $A+A^2$ on
$\mcH_5$.  The spectrum of the restriction of $A+A^2$ on $\mcH_3$ is
given by $\gamma(\mu^2)(\gamma(\mu^2)+1)$, where
\begin{equation}\begin{split}
  \label{gam3}
  \gamma(\mu^2) 
 & = - \frac12 + \sqrt{\mu^2+\Bigl(\frac{n+3}{2}-p\Bigr)^2}\\
  &= - \frac12 + \sqrt{\mu^2+\Bigl(a_p+1\Bigr)^2}\\
  &= - \frac12 + \sqrt{\mu^2+a_{p-1}^2}
\end{split}\end{equation}
for $\mu^2$ running over the exact spectrum of $\Delta_\Sigma$ acting
on $(p-1)$-forms.

Similarly, the spectrum of $A+A^2$ restricted to $\mcH_4$ is given by
$\gamma(\mu^2)(\gamma(\mu^2)+1)$, where
\begin{equation}\begin{split}
  \label{gam4}
  \gamma(\mu^2)
  &= - \frac12 + \sqrt{\mu^2+\Bigl(\frac{n-1}{2}-p\Bigr)^2}\\
  &= - \frac12 + \sqrt{\mu^2+\Bigl(a_p-1\Bigr)^2} \\
  &= - \frac12 + \sqrt{\mu^2+a_{p+1}^2}
\end{split}\end{equation} 
for $\mu^2$ running over the co-exact spectrum of $\Delta_\Sigma$
acting on $p$-forms.

The spectrum of $A+A^2$ on $\mcH_1$, is $\gamma(\gamma+1)$ with multiplicity $b_{p-1}(\Sigma)$
where \begin{equation}
  \label{gam1}
  \gamma=-\frac12+\left|a_{p-1}\right|=-\frac12+\left|\frac{n+1}{2}-p+1\right|.
\end{equation} 
The spectrum of $A+A^2$ on $\mcH_2$, is $\gamma(\gamma+1)$ with multiplicity $b_{p}(\Sigma)$
where \begin{equation}
  \label{gam2}
  \gamma=-\frac12+\left|a_{p+1}\right|=-\frac12+\left|\frac{n+1}{2}-p-1\right|.
\end{equation}
\begin{rem}
  The decomposition given in \lref{decompos} is also preserved by
  $A_0^2$. Therefore, the expression~\eqref{locform} of the quadratic
  form for a $p$-form supported in $\mcM_\eps$ shows that the
  pointwise decomposition of a form is preserved by the quadratic
  form.  Namely, if $\phi=\sum_{1 \leq i\leq 5}\phi^i$ with
  $\phi^i(t)\in \mcH_i$ for all $t$, then
  \begin{equation*}\label{decomposition}
    \int|D \phi|^2 = \sum_{1\leq i\leq 5}\int|D \phi^i|^2.
  \end{equation*}

  For our asymptotic analysis below we need a spectral decomposition
  in a low and high eigenvalue part. Namely, we need the decomposition
  \begin{equation}
    \label{eq:decomp2}
    \phi^3+\phi^4+\phi^5=\phi_\Lambda+\phi^\Lambda,
  \end{equation}
  where $U_\pm \phi_\Lambda$ and $U \phi_\Lambda$ belong (pointwise)
  to the orthogonal sum of the eigenspace of $A_0^2$ associated to the
  eigenvalues smaller that $\Lambda^2$. Similarly, $U_\pm
  \phi^\Lambda$ and $U \phi^\Lambda$ belong (pointwise) to the
  orthogonal sum of the ei\-gen\-space of $A_0^2$ associated to the
  eigenvalues strictly larger than $\Lambda^2$.
\end{rem}

\subsection{Study of a sequence of eigenforms}
\label{suite}
We consider now a sequence $\eps_m$ converging to $0$ such that there
is a sequence $\lambda_m$ of eigenvalues of the Hodge-de Rham operator
$\Delta_{\eps_m}$ on $M_{\eps_m}$ and converging to some $\lambda$. Let 
$\phi_m$ be the corresponding normalised $p$-eigenform. In the following we 
will write $\eps=\eps_m.$ Thus
\begin{equation*}
  \Delta_\eps \phi_m=\lambda_m \phi_m, \qquad \|\phi_m \|=1.
\end{equation*}
We want to understand the behaviour of $\phi_m$ when $m\to\infty$.
Since this sequence is bounded in $\Sobsymb[1]_{\mathrm{loc}}$ and by
elliptic regularity, after passing to a subsequence, we can assume
that $\phi_m$ converges to $\phi$ on $M\setminus \mcV $ in the
$\Sobsymb[1]$-topology and also in $\Contsymb[\infty]_{\mathrm{loc}}$.
Similarly, we can also assume that $\phi_m$ converge to $\phi$ on each
of the cones $\mcC^\pm_\eta$ for fixed $\eta>0$ such that $\eps_m \le
\eta$.  The main difficulty is to understand the behaviour of $\phi_m$
on $\mcM_\eps$. For this purpose we introduce a smooth cut-off
function $\chi$ with support in $\mcM_\eps,\, 0\le \chi\le 1,$ and
such that $\chi=1$ on $\mcA_\eps\cup (\mcC^+_\eps\setminus
\mcC^+_{1/2})\cup(\mcC^-_\eps\setminus \mcC^-_{1/2}).$

On $\mcM_\eps$ we have the decomposition
\begin{equation}\label{decomp}
  \phi_m = \phi_m^1+\phi_m^2+\phi_{m,\Lambda}+\phi_m^\Lambda
\end{equation}
of \lref{decompos} and~\eqref{eq:decomp2}.

\subsubsection{Non-harmonic terms}
We study here the behaviour of the last two terms.
\begin{lem}
  \label{lem:high.ev}
  The high-energy component of the eigenforms can be
  estimated near the handle by
  \begin{equation*}
    \|\phi_m^\Lambda\|_{\Lsqr {\mcA_\eps}}^2
      \le C \frac{\eps^2}{\Lambda^2}
      \qquad \text{and} \qquad
    \|\phi_m^\Lambda\|^2_{\Lsqr {\mcC^\pm_\eps\setminus \mcC^\pm_{\eta}}}
      \le C\frac{\eta^2}{\Lambda^2}
  \end{equation*}
  provided $\Lambda$ is large enough and $\eps=\eps_m \le \eta$. The
  estimate is uniform in $m \to \infty$.
\end{lem}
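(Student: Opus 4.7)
The plan is to derive both bounds simultaneously from the single global energy inequality
\[
\int_{\mcM_\eps}|D\phi_m^\Lambda|^2\,d\vol_{g_\eps}\ \leq\ \int_M|D\phi_m|^2\,d\vol_{g_\eps}\ =\ \lambda_m,
\]
which follows from the fibrewise orthogonality of both the $\Lsqr{}$-norm and the quadratic form with respect to the splitting $\phi_m=\phi_m^1+\phi_m^2+\phi_{m,\Lambda}+\phi_m^\Lambda$ noted in the Remark of \sref{specA}, together with $\|\phi_m\|=1$ and $\lambda_m\to\lambda$ bounded.

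The first step is to substitute the explicit expression~\eqref{locform} for the left-hand side and move the boundary contribution $\tfrac{1}{\eps}\qf b(\phi_m^\Lambda,\phi_m^\Lambda)$ to the right, using $|\qf b(\phi_m^\Lambda,\phi_m^\Lambda)|\leq\tfrac{n}{2}\sum_{\pm}|\sigma^\Lambda_\pm(\eps)|^2$. This yields
\[
\sum_{\pm}\!\int_\eps^1\!\Bigl(|\sigma^{\Lambda\prime}_\pm|^2+\tfrac{1}{t^2}\iprod{\sigma^\Lambda_\pm}{(A+A^2)\sigma^\Lambda_\pm}\Bigr)\,dt+\int_0^L\!\Bigl(|\sigma^{\Lambda\prime}|^2+\tfrac{1}{\eps^2}|A_0\sigma^\Lambda|^2\Bigr)\,dt\ \leq\ \lambda_m+\tfrac{C}{\eps}\sum_{\pm}|\sigma^\Lambda_\pm(\eps)|^2+R,
\]
where $R=O(1)$ absorbs contributions at $t=1$ uniformly controlled by elliptic regularity and the previously noted $\Contsymb[\infty]_{\mathrm{loc}}$-convergence of $\phi_m$ away from $\mcV$.

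The spectral information from \sref{specA} then gives $A_0^2\geq\Lambda^2$ and, via~\eqref{gam5},~\eqref{gam3},~\eqref{gam4}, $A+A^2\geq c_0\Lambda^2$ for some $c_0=c_0(n,p)>0$ once $\Lambda$ exceeds a dimensional threshold (each $\gamma$ satisfies $\gamma\geq\Lambda-1$). Consequently the potential term on the handle dominates $\tfrac{\Lambda^2}{\eps^2}\|\sigma^\Lambda\|^2_{\Lsqr{0,L}}$, while the cone potential term restricted to $[\eps,\eta]$ dominates $\tfrac{c_0\Lambda^2}{\eta^2}\|\sigma^\Lambda_\pm\|^2_{\Lsqr{\eps,\eta}}$, so once the energy is bounded independently of $m$ the claimed $\Lsqr{}$-estimates follow.

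The main step, and the principal obstacle, is the absorption of the singular boundary term $\tfrac{1}{\eps}|\sigma^\Lambda_\pm(\eps)|^2$. For this I would fix a small $\alpha>0$ and apply the one-dimensional trace inequality
\[
|\sigma^\Lambda_\pm(\eps)|^2\ \leq\ \tfrac{2}{\alpha\eps}\|\sigma^\Lambda_\pm\|^2_{\Lsqr{\eps,(1+\alpha)\eps}}+2\alpha\eps\,\|\sigma^{\Lambda\prime}_\pm\|^2_{\Lsqr{\eps,(1+\alpha)\eps}}.
\]
After division by $\eps$, the second piece becomes at most $2\alpha\,\|\sigma^{\Lambda\prime}_\pm\|^2_{\Lsqr{\eps,1}}$, absorbable into the derivative term of the cone if $\alpha$ is small; the first piece, using $\tfrac{1}{t^2}\geq\tfrac{1}{(1+\alpha)^2\eps^2}$ on $[\eps,(1+\alpha)\eps]$, is bounded by $\tfrac{2(1+\alpha)^2}{\alpha c_0\Lambda^2}\int_\eps^1\tfrac{1}{t^2}\iprod{\sigma^\Lambda_\pm}{(A+A^2)\sigma^\Lambda_\pm}\,dt$, absorbable into the cone potential term for $\Lambda$ large. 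Choosing first $\alpha$ small and then $\Lambda$ large, the inequality closes to give a uniform bound on each term on the left-hand side; combined with the lower bounds above, this produces exactly $\|\sigma^\Lambda\|^2_{\Lsqr{0,L}}\leq C\eps^2/\Lambda^2$ and $\|\sigma^\Lambda_\pm\|^2_{\Lsqr{\eps,\eta}}\leq C\eta^2/\Lambda^2$, uniformly in $m$.
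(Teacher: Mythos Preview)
Your argument is correct in substance, but the route differs from the paper's in two independent ways.

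First, the paper multiplies by a cut-off $\chi$ supported in $\mcM_\eps$ and equal to $1$ on $\mcA_\eps\cup(\mcC^\pm_\eps\setminus\mcC^\pm_{1/2})$, so that $\chi\phi_m^\Lambda$ vanishes near $t=1$ and formula~\eqref{locform} applies with no boundary contribution there; the price is the harmless term $|d\chi|_\infty^2$.  You instead keep $\phi_m^\Lambda$ uncut and carry the resulting boundary term $\sum_\pm\langle\sigma^\Lambda_\pm(1),A\sigma^\Lambda_\pm(1)\rangle$ as your $R$.  Your justification of $R=O(1)$ via ``elliptic regularity'' is imprecise, since the metric is only $C^0$ across $t=1$; the clean argument is that $\|\phi_m\|_{\Sob{M_\eps}}^2\le 1+\lambda_m$, the cone metric on $[1/2,1]\times\Sigma$ is uniformly equivalent to the product metric, and hence the trace theorem bounds $\|\sigma_\pm(1)\|_{\Sobsymb[1/2](\Sigma)}$, which controls $\langle\sigma,A\sigma\rangle$ for the first-order operator $A$.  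This repair is routine, and the cut-off simply avoids it.

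Second, and more interestingly, the singular boundary term $\tfrac{1}{\eps}|\sigma^\Lambda_\pm(\eps)|^2$ is absorbed differently.  The paper feeds it into the \emph{handle} integral via the sharp inequality
\[
\int_0^L\Bigl(|v'|^2+\tfrac{\Lambda^2}{\eps^2}|v|^2\Bigr)dt\ \ge\ \tfrac{\Lambda}{\eps}\tanh\!\Bigl(\tfrac{\Lambda L}{2\eps}\Bigr)\bigl(|v(0)|^2+|v(L)|^2\bigr),
\]
using the transmission condition $\sigma_\pm(\eps)=\pm J\sigma(L\text{ or }0)$ to match the endpoints.  You instead absorb it into the \emph{cone} potential via a standard trace inequality on $[\eps,(1+\alpha)\eps]$, splitting into a derivative piece (absorbed for $\alpha$ small) and an $\Lsymb[2]$ piece (absorbed for $\Lambda$ large via $A+A^2\ge c_0\Lambda^2$).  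Both mechanisms work; the paper's uses the handle structure explicitly and gives a cleaner constant, while yours relies only on the generic Schr\"odinger-type lower bound on the cone and would go through even without the handle.

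A minor constant: the bound on $|\qf b|$ should be $\tfrac{n+1}{2}$ (or $\tfrac{n}{2}+1$), not $\tfrac{n}{2}$, but this is immaterial.
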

\begin{proof}
Let $\sigma_m=U \chi\phi^\Lambda_m$ and
  $\sigma_{\pm,m}=U_\pm \chi\phi^\Lambda_m,$ we have that
  \begin{align*}
\|D\chi\phi^\Lambda_m\|^2_{\Lsymb[2]}&=
\int_{\mcM_\eps}|d\chi|^2|\phi^\Lambda_m|^2|+\int_{\mcM_\eps}\chi^2
|D\phi^\Lambda_m|^2\\
&\leq |d\chi|_\infty^2+|\chi|_\infty^2\int_{\mcM_\eps}|D\phi^\Lambda_m|^2
\leq |d\chi|_\infty^2+|\chi|_\infty^2 \lambda_m=C_\chi(\lambda_m)  
\end{align*}
is uniformly bounded, but on the other hand
  \begin{multline}  
         \|D\chi\phi^\Lambda_m\|^2_{\Lsymb[2]}\ge
      \sum_{s=\pm} \int_\eps^{1/2} 
       \Bigl[ |\sigma_{\pm,m}'|^2+ 
         \frac{\iprod {(A^2+A)\sigma_{\pm,m}} {\sigma_{\pm,m}}} 
              {t^2} \Bigr] dt\\
    \label{eq:est1}
      + \int_0^L \Bigl[ |\sigma_m'|^2 +
         \frac{|A_0\sigma_m|^2} {\eps^2} \Bigr] dt
       - \frac{n+1}{2}\bigl[ |\sigma_m(0)|^2+\sigma_m(L)|^2\bigr].
  \end{multline}
  The boundary term can be estimated using the following optimal
  inequality
  \begin{equation*}
    \int_0^L \Bigl[ |v'(t)|^2 + 
                \frac{\Lambda^2}{\eps^2}|v(t)|^2 \Bigr]dt 
    \ge \frac {\Lambda}{\eps} 
               \tanh \Bigl(\frac{\Lambda L}{2\eps}\Bigr)
               \bigl[|v(0)|^2+|v(L)|^2\bigr],
  \end{equation*}
  which is true for all $v \in \Sob{[0,L]}$. Namely, if we choose
  $\Lambda>0$ sufficiently large such that
  \begin{equation*}
    \frac{\Lambda}{\eps} \tanh \Bigl(\frac{\Lambda L}{2\eps} \Bigr)
    \ge (n+1)
  \end{equation*}
  for all $\eps \in {]}0,1]$, then the boundary term can be estimated
  in terms of the last integral in~\eqref{eq:est1}.  In addition, the
  spectrum of the restriction of the operator $A^2+A$ to the
  orthogonal sum of the eigenspaces of $A_0^2$ associated to the
  eigenvalues strictly larger that $\Lambda^2$, is bounded from below
  by $\Lambda^2/2$. Consequently, we obtain
  \begin{align*}
    C_\chi(\lambda_m) &
      \ge \sum_{s=\pm} 
           \int_\eps^{1/2} \left[ |\sigma_{\pm,m}'|^2 + 
                \frac{\Lambda^2|\sigma_{\pm,m}|^2}{2 t^2}\right]dt
         +\frac12  \int_0^L 
           \left[ |\sigma_m'|^2+\frac{|A_ 0\sigma_m|^2}{\eps^2}\right]dt\\
      &\ge \sum_{s=\pm} \int_\eps^{1/2}
      \frac{\Lambda^2|\sigma_{\pm,m}|^2}{2 t^2}dt+
      \int_0^L \frac{\Lambda^2|\sigma_m|^2}{2\eps^2}dt\\
      &\ge \sum_{s=\pm} \int_\eps^\eta
      \frac{\Lambda^2|\sigma_{\pm,m}|^2}{2 \eta^2}dt+
      \int_0^L \frac{\Lambda^2|\sigma_m|^2}{2\eps^2}dt \\
  \end{align*}
  and we are done with $C=4C_\chi(\lambda)$ since $\lambda_m \to \lambda$.
\end{proof}

The next lemma says that also the non-harmonic low energy part of
$\phi_m$ goes to zero on the handle when $m\to\infty$.
\begin{lem}
  \label{lem:low.ev}
  \begin{equation*}
    \lim_{m\to\infty} \|\phi_{m,\Lambda}\|^2_{\Lsqr{\mcA_\eps}}=0.
  \end{equation*}
\end{lem}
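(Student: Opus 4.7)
My plan is to analyze, on the handle $\mcA_\eps=]0,L[\times\Sigma$, the ODE inherited by $\sigma_m:=U\phi_{m,\Lambda}|_{\mcA_\eps}$ from the eigenvalue equation $D^2\phi_m=\lambda_m\phi_m$. Using the description of $D^2$ on the handle from \sref{sec:lap}, together with the preservation of the decompositions of \lref{decompos} and~\eqref{eq:decomp2} by $D^2$ (the Remark after \lref{decompos}), $\sigma_m$ satisfies on $]0,L[$
\[
-\sigma_m''+\frac{1}{\eps^2}A_0^2\sigma_m=\lambda_m\sigma_m.
\]
Since $\sigma_m$ takes pointwise values in the non-harmonic low-energy subspace of $A_0^2$, one has $A_0^2\ge\mu_1^2>0$ there, where $\mu_1^2$ denotes the smallest positive eigenvalue of $\Delta_\Sigma$. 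As $\mu_1^2/\eps^2\to\infty$ while $\lambda_m$ stays bounded, the equation forces the solutions to concentrate in boundary layers at the endpoints $t=0$ and $t=L$, and consequently the $\Lsqr$-mass on $]0,L[$ to vanish.

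The analysis can be made explicit by expanding $\sigma_m(t)=\sum_k c_{k,m}(t)\,e_k$ in an orthonormal $A_0^2$-eigenbasis $\{e_k\}$ of the \emph{finite-dimensional} non-harmonic subspace with eigenvalues $\mu_k^2\in(0,\Lambda^2]$. Each coefficient satisfies the scalar ODE $c_{k,m}''=\omega_{k,m}^2 c_{k,m}$ with $\omega_{k,m}^2:=\mu_k^2/\eps^2-\lambda_m$; for $m$ large, $\omega_{k,m}\ge\mu_1/(\eps\sqrt{2})$. Solving the two-point boundary value problem and integrating the resulting $\cosh$-$\sinh$ representation yields the standard exponential-boundary-layer estimate
\[
\int_0^L|c_{k,m}(t)|^2\,dt\;\le\;\frac{C}{\omega_{k,m}}\bigl(|c_{k,m}(0)|^2+|c_{k,m}(L)|^2\bigr)
\]
for a universal constant $C$. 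Summing over the finitely many $k$ and rewriting $\sigma_m(0),\sigma_m(L)$ as cone traces via the compatibility conditions~\eqref{recolfq} gives
\[
\|\phi_{m,\Lambda}\|^2_{\Lsqr{\mcA_\eps}}\;\le\;\frac{C\eps}{\mu_1}\bigl(|\sigma_{+,m,\Lambda}(\eps)|^2+|\sigma_{-,m,\Lambda}(\eps)|^2\bigr).
\]
It now suffices to establish a uniform bound on the boundary traces $|\sigma_{\pm,m,\Lambda}(\eps)|^2$.

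For this last step I would use the standard one-dimensional trace inequality on $[\eps,1]$, whose constant is uniform in $\eps\in(0,1]$ because the interval length does not degenerate, thereby reducing the task to a uniform $\Sobsymb[1]$-bound for $\sigma_{\pm,m,\Lambda}$ on $[\eps,1]$. The $\Lsqr$ part is immediate from $\|\phi_m\|=1$; the derivative part is to be extracted from the cone contribution to~\eqref{locform}, bounded from above by $\int_{\mcM_\eps}|D\phi_{m,\Lambda}|^2\le\lambda_m$ plus the boundary term $\eps^{-1}\qf b(\phi_{m,\Lambda},\phi_{m,\Lambda})$. The main obstacle I expect is controlling the negative part of the potential $t^{-2}\langle\sigma_\pm,(A+A^2)\sigma_\pm\rangle$: on the non-harmonic low-energy subspace its eigenvalues $\gamma(\gamma+1)$ satisfy $\gamma>-\tfrac12$ \emph{strictly} by~\eqref{gam3}, \eqref{gam4}, \eqref{gam5}, so only finitely many modes approach the Hardy-critical threshold $-1/(4t^2)$; a sub-critical Hardy inequality on $[\eps,1]$, applied mode by mode, should absorb this potential into $\int|\sigma_\pm'|^2$ at the price of a $\Lambda$-dependent constant. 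Once this is done the uniform bound $|\sigma_{\pm,m,\Lambda}(\eps)|^2\le C$ follows, and we conclude $\|\phi_{m,\Lambda}\|^2_{\Lsqr{\mcA_\eps}}=O(\eps_m)\to 0$.
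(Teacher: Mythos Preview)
Your boundary-layer estimate on the handle,
\[
\|\phi_{m,\Lambda}\|^2_{\Lsqr{\mcA_\eps}}\;\le\;\frac{C\eps}{\mu_1}\bigl(|\sigma_{+,m,\Lambda}(\eps)|^2+|\sigma_{-,m,\Lambda}(\eps)|^2\bigr),
\]
is correct, but the plan to close it by a uniform bound $|\sigma_{\pm,m,\Lambda}(\eps)|^2\le C$ does not work. In fact this trace is generically of order $\eps^{-1}$: in the paper's normalisation on the handle one has $\sigma_m(0)=\eps^{-1/2}(a_m+b_m\e^{-\delta_m L})$ with $|a_m|^2+|b_m|^2\sim\|\phi_{m,\Lambda}\|^2_{\Lsqr{\mcA_\eps}}$ bounded, so $|\sigma_{-,m}(\eps)|^2\sim|a_m|^2/\eps$. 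Plugging such a trace back into your inequality only reproduces $\|\phi_{m,\Lambda}\|^2\lesssim|a_m|^2+|b_m|^2$, i.e.\ no decay. The whole content of the lemma is precisely that $a_m,b_m\to 0$, and this cannot be obtained from the zeroth-order data alone.

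Your proposed route to the trace bound is moreover circular. To control $\int_\eps^1|\sigma_{\pm}'|^2$ from the cone part of~\eqref{locform} you must move the boundary term $\eps^{-1}\qf b(\phi_{m,\Lambda},\phi_{m,\Lambda})$ to the right-hand side; but that term is exactly $\eps^{-1}$ times a multiple of $|\sigma_{\pm,m}(\eps)|^2$, the quantity you are trying to estimate. (This is why \lref{lem:high.ev} only works for $\Lambda$ large: one needs the handle term $\Lambda/\eps$ to dominate the boundary coefficient $(n+1)/(2\eps)$, which fails in the low-energy range $\mu\le\Lambda$.) A secondary issue: your assertion that $\gamma>-\tfrac12$ strictly on $\mcH_5$ is false; for $p=(n+1)/2$ and $\mu=1$ one has $\gamma_-(\mu^2)=-\tfrac12$, so the sub-critical Hardy step would also need care.

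The paper's proof is genuinely different in that it uses the \emph{first-order} transmission conditions~\eqref{recolop} in addition to~\eqref{recolfq}. Writing the cone solutions explicitly as Bessel combinations $c_{\pm,m}f_\gamma+d_{\pm,m}g_\gamma$, the two orders of matching yield an overdetermined system for the limits $a_\infty,b_\infty$ which, via the operator $N_{\gamma,\mu,p}=\gamma+\mu+(n/2-P)J$ analysed in the Sublemma, forces $a_\infty=b_\infty=0$. The derivative matching is the missing ingredient in your argument.
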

\begin{proof}
  Let $U_s\chi\phi_m=\sigma_{s,m}$ for $s=\emptyset,+,-$.  Since
  $\phi_{m,\Lambda}$ is a finite sum of forms, which transversally are
  (non-harmonic) eigenforms of $A^2+A$ and $ A^2_0$, we can assume
  that $A_0^2\sigma_{s,m}=\mu^2\sigma_{s,m}$ with $\mu\neq 0$, and
  $(A^2+A)\sigma_{s,m}=\gamma(\gamma+1)\sigma_{s,m}$ where $\gamma\ge
  -1/2$ depends on $\mu$ as in~\eqref{gam5},~\eqref{gam3}
  and~\eqref{gam4}.

  On the handle, i.e., on $[0,L]$, $\sigma_m$ satisfies the
  (form-valued) equation
  \begin{equation*}
    -\sigma_m''(t)+\frac{\mu^2}{\eps^2}\sigma_m(t)=\lambda_m \sigma_m(t).
  \end{equation*}
  Consequently, if $\delta_m=\sqrt{\mu^2/\eps^2-\lambda_m}$, we can
  write
  \begin{equation*}
    \sigma_m(t) 
    = \frac{1}{\sqrt{\eps}}
      \bigl[ a_m \e^{-\delta_m t}+b_m \e^{-\delta_m (L-t)} \bigr],
  \end{equation*}
  where the coefficients $a_m$, $b_m$ are pairs of forms on
  $\Sigma$.
  
  It is not hard to check that there is a constant $C$
  independent of $m$ such that
  \begin{equation*}
    C^{-1} (|a_m|^2+ |b_m|^2)
    \le \int_0^L  |\sigma_m(t)|^2 dt
    \le C(|a_m|^2 + |b_m|^2)
  \end{equation*}
  where $|\cdot|$ denotes the $\Lsymb[2]$-norm of pairs of forms on
  $\Sigma$.

  Our aim is to show that $a_m$ and $b_m$ converge to $0$. To do so,
  we need also the behaviour of solutions on the cones.  Namely, on
  $[\eps,1/2]$, the transformed eigenform $\sigma_{\pm,m}$ solves the
  equation
  \begin{equation*}
    -\sigma_{\pm,m}''(t) + \frac{\gamma(\gamma+1)}{t^2}\sigma_{\pm,m}(t)
    =\lambda_m\sigma_{\pm,m}.
  \end{equation*}
  Hence we can express the solution of the equation in terms of
  Bessel's functions. As a result, there are entire functions
  $F_\gamma$ and $G_\gamma$ with $F_\gamma(0)=G_\gamma(0)=1$, such
  that the solutions are linear combinations of
\begin{align*}
    f_\gamma(t)&=t^{\gamma+1} F_\gamma(\lambda_m t^2)
\\   g_\gamma(t)&=
  \begin{cases}
     t^{-\gamma} G_\gamma(\lambda_m t^2) & 
                \text{if $\gamma +1/2 \notin \N$}\\
     t^{-\gamma} G_\gamma(\lambda_m t^2) + a\log(t) f_\gamma(t) &
                \text{if $\gamma +1/2 \in \N$.}
  \end{cases}
\end{align*}
Namely, there exist pairs of forms $c_{\pm,m}$, $d_{\pm,m}$ on
$\Sigma$ (independent of $t$), such that
\begin{equation}\label{bessel}
    \sigma_{\pm,m}(t)
    =  c_{\pm,m}f_\gamma(t)
     + d_{\pm,m}g_\gamma(t)
  \end{equation}
    
  In both cases, we obtain the estimate
  \begin{multline}
  \label{growth}
        C^{-1} \bigl( |c_{\pm,m}|^2 + h_\gamma(\eps) |d_{\pm,m}|^2\bigr)
    \le \int_\eps^{1/2} |\sigma_{\pm,m}(t)|^2 dt \\
    \le C      \bigl( |c_{\pm,m}|^2 + h_\gamma(\eps) |d_{\pm,m}|^2\bigr)
  \end{multline}
  where
  \begin{equation*}
    h_\gamma(\eps) \sim
   \begin{cases}
      \eps^{-2\gamma+1} &\text{if $\gamma > 1/2$}\\
      |\log(\eps)|      &\text{if $\gamma = 1/2$}\\
      1                 &\text{if $\gamma < 1/2$.}
    \end{cases}
  \end{equation*}

  We now use the transmission conditions~\eqref{recolfq} to combine
  the solutions on the handle and the cones. Let
  \begin{equation}
    \label{eq:def.j}
    J = \begin{pmatrix} -\id & 0\\ 0 & \id \end{pmatrix}.
  \end{equation}
  Then the transmission conditions~\eqref{recolfq} read as
  \begin{equation}
    \label{transi1}
    \frac{1}{\sqrt{\eps}} \bigl[a_m \e^{-\delta_m L}+b_m \bigr]
     = \sigma_{+,m}(\eps)
  \end{equation}
  and
  \begin{equation}
    \label{transi2}
    \frac{1}{\sqrt{\eps}} \bigl[a_m +b_m \e^{-\delta_m L} \bigr]
    = J \sigma_{-,m}(\eps).
  \end{equation}
  Since $a_m$ and $b_m$ belong to a compact set (namely, to a ball of
  the finite-dimen\-sio\-nal space of eigenforms of $A_0^2$ below
  $\Lambda^2$), we can assume (after passing to a subsequence) that
  \begin{equation*}
    \lim_{m\to\infty} a_m = a_\infty
    \qquad \text{and} \qquad
    \lim_{m\to\infty} b_m = b_\infty.
  \end{equation*}
  Recall that the main point is to show that $a_\infty=b_\infty=0$.

  The transmission conditions~\eqref{transi1} and~\eqref{transi2}, the
  behaviour of $\delta_m \sim \mu/\eps$ as $\eps$ goes to 0, and the
  fact that the sequence $c_{\pm,m}$ is bounded (cf.~\eqref{growth}),
  imply that
  \begin{equation}
    \label{b1}
    \lim_{m\to\infty} d_{+,m}\eps^{-\gamma+1/2}=b_ \infty
  \end{equation}
  and
  \begin{equation}
    \label{b2} 
    \lim_{m\to\infty} d_{-,m}\eps^{-\gamma+1/2}=Ja_ \infty.
  \end{equation}
  We conclude from these last equalities together with~\eqref{growth}
  that $a_\infty=b_\infty=0$ in the case $\gamma < \frac12$. A similar
  argument holds in the case $\gamma=\frac12$.

  It remains to consider the case $\gamma>\frac12$. From the transmission
  condition of first order~\eqref{recolop} we obtain
  \begin{equation}
    \label{transi1'}
    \frac{\delta_m}{\sqrt\eps} \bigl[ -a_m \e^{-\delta_m L}+b_m \bigr]
    = \sigma'_{+,m}(\eps)-\frac{1}{\eps} 
                    \Bigl( \frac{n}{2}-P \Bigr) J\sigma_{+,m}(\eps)
  \end{equation}
  for $\sigma_{+,m}$ and
  \begin{equation}
    \label{transi2'}
    \frac{\delta_m}{\sqrt\eps} \bigl[ -a_m + b_m \e^{-\delta_m L} \bigr]
    = - J\sigma'_{-,m}(\eps)+\frac{1}{\eps}
                    \Bigl( \frac{n}{2}-P \Bigr) \sigma_{-,m}(\eps)
  \end{equation}
  for $\sigma_{-,m}$.
  
  In the remaining part of the proof, we want to show that
  $b_\infty=0$ using~\eqref{transi1'}.  The argument for $a_\infty=0$
  follows similarly from~\eqref{transi2'}.  Namely,
  from~\eqref{transi1'}, we obtain the additional information
  \begin{equation*}
     b_\infty
     = \lim_{m\to\infty} \frac{\sqrt\eps}{\delta_m} 
       \Bigl(
         - \gamma \eps^{-\gamma-1}                      d_{+,m}
         - \Bigl(\frac{n}{2}-P \Bigr) \eps^{-\gamma-1} Jd_{+,m} 
       \Bigr)
  \end{equation*}
  for $b_\infty$.  This equality combined with~\eqref{b1}
  give the necessary condition on $b_ \infty$, namely
  \begin{equation*}
    (\mu+\gamma)b_\infty
    = \begin{pmatrix} 
      \frac n 2-p+1 & 0\\
      0            & p-\frac n 2
    \end{pmatrix}
    b_\infty.
  \end{equation*}
  Using now the result of the following sublemma, we conclude that
  $b_\infty=0.$ Indeed we restrict ourself to the case where
  $\gamma>1/2$, so the last case $\gamma=1/2-\mu$ is not possible
  with $\mu\geq 0$.
\end{proof}
\begin{sublem}
  \label{souslem}
  Let $b$ be an eigenform of $A(A+1)$ with eigenvalue
  $\gamma(\gamma+1)$ relative to a non-zero eigenvalue $\mu^2$ of
  $\Delta_\Sigma$ and denote
  \begin{equation*}
     N_{\gamma,\mu,p}
     = \gamma + \mu + \Bigl( \frac n2 - P \Bigr) J
     = \gamma + \mu -
     \bigg( 
     \begin{array}{cc} \frac n2 - p + 1 & 0\\
       0                & p-\frac n2
     \end{array}
     \bigg).
   \end{equation*}
   The operator $N_{\gamma,\mu,p}$ restricted to $\mcH_j$ is
   identically $0$ iff $j=5$, $\gamma=\gamma_-(\mu^2)$ and
   $p=(n+1)/2$. In this case, $\mu\in {]}0,1]$ and $\gamma=1/2-\mu$.

   In all other cases, i.e., if $b\in\mcH_j$, $j=3,4$, or $b \in
   \mcH_5$ and $\gamma=\gamma_+(\mu^2)$ or $\gamma=\gamma_-(\mu^2)$
   but $p \ne (n+1)/2$ or $\mu>1$, then $N_{\gamma,\mu,p}(b)=0$
   implies $b=0$.
\end{sublem}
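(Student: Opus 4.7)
My plan is to exhibit $N_{\gamma,\mu,p}$ as a diagonal matrix and then reduce the statement to a case analysis on the subspaces of \lref{decompos}. Using $J=\begin{pmatrix}-1&0\\0&1\end{pmatrix}$ and the fact that $P$ acts as $p-1$ on the first component (a $(p-1)$-form) and as $p$ on the second (a $p$-form), one obtains
\begin{equation*}
  N_{\gamma,\mu,p}
  = \begin{pmatrix} \gamma + \mu - \bigl(\tfrac n 2 - p + 1\bigr) & 0 \\ 0 & \gamma + \mu - \bigl(p - \tfrac n 2\bigr) \end{pmatrix}.
\end{equation*}
Since $\mu>0$, the harmonic subspaces $\mcH_1$ and $\mcH_2$ do not occur, so only the cases $b\in\mcH_3,\mcH_4,\mcH_5$ need to be examined.

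For $b=(\beta,0)\in\mcH_3$ with $\beta\neq 0$, the equation $N(b)=0$ reduces to the scalar condition $\gamma+\mu=\tfrac n2-p+1=a_p+\tfrac12$. Substituting the explicit formula $\gamma=-\tfrac12+\sqrt{\mu^2+(a_p+1)^2}$ from \eqref{gam3} gives $\sqrt{\mu^2+(a_p+1)^2}=a_p+1-\mu$; squaring (valid only if $a_p+1-\mu\geq 0$) yields $(a_p+1)\mu=0$. Since $\mu>0$ this would force $a_p=-1$, but then the right-hand side becomes $-\mu<0$, a contradiction. Hence $N$ has trivial kernel on $\mcH_3$, and the treatment of $\mcH_4$ is entirely symmetric via \eqref{gam4}.

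The decisive case is $\mcH_5$. From the explicit description \eqref{h5.ev}, any eigenform $(\beta,\alpha)\in\mcH_5$ with $\mu^2\neq 0$ has both components nonzero (if $d_0^\ast\alpha$ vanished then $\Delta_\Sigma\alpha=d_0^\ast d_0\alpha=0$, contradicting $\mu>0$). Consequently $N(b)=0$ requires both diagonal entries of $N$ to vanish simultaneously, which forces $\tfrac n 2-p+1=p-\tfrac n 2$, i.e.\ $p=(n+1)/2$, and then the common relation $\gamma+\mu=\tfrac12$. Setting $a_p=0$ in \eqref{gam5} yields $\gamma_+(\mu^2)=\mu+\tfrac12$ and $\gamma_-(\mu^2)=-\tfrac12+|\mu-1|$. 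The branch $\gamma_+$ produces $\mu=0$ (excluded), while the branch $\gamma_-$ satisfies $\gamma+\mu=\tfrac12$ precisely when $\mu\in{]}0,1]$, in which case $\gamma=\tfrac12-\mu$; for $\mu>1$ it would demand $\mu=1$, contradicting $\mu>1$. This recovers exactly the admissible regime claimed in the sublemma. The only delicate point in the whole argument is the bookkeeping of the positivity condition when squaring and of the split $\mu\leq 1$ versus $\mu>1$ in the $\gamma_-$ branch; everything else is routine linear algebra once $N_{\gamma,\mu,p}$ is diagonalised in the frame of \lref{decompos}.
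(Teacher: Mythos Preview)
Your proof is correct and follows essentially the same route as the paper: a case analysis on $\mcH_3,\mcH_4,\mcH_5$ using the diagonal form of $N_{\gamma,\mu,p}$, the observation that eigenforms in $\mcH_5$ have both components nonzero (forcing $p=(n+1)/2$), and then the split $\gamma_+/\gamma_-$ with the $\mu\le1$ versus $\mu>1$ dichotomy. The only cosmetic difference is that for $\mcH_3$ and $\mcH_4$ the paper dispatches the condition directly via the inequality $\mu+\sqrt{\mu^2+c^2}>c$ for $\mu>0$, whereas you square and derive a contradiction; the two arguments are equivalent.
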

\begin{proof}
  We distinguish the three cases $b\in \mcH_3$,
  $b\in \mcH_4$ and $b\in \mcH_5$.

  If $b\in\mcH_3$ then we have
  \begin{equation*}
    b=\begin{pmatrix} b_1\\ 0 \end{pmatrix}
  \end{equation*}
  and
  \begin{equation*}
    \gamma = -\frac12 + \sqrt{\mu^2+\Bigl (\frac{n+3}{2}-p\Bigr)^2}.
  \end{equation*}
  So $N_{\gamma,\mu,p}(b)=0$ means 
  \begin{equation*}
    \bigg( \mu+ \sqrt{\mu^2+\Bigl( \frac{n+3}{2}-p \Bigr)^2} \bigg)
      b_{1} 
    = \Bigl( \frac{n+3}{2}-p \Bigr)b_{1}.
  \end{equation*}
  Since $\mu>0$, it follows that $b_{1}=0$.

  If $b\in\mcH_4$ then we have
  \begin{equation*}
    b= \begin{pmatrix} 0\\b_{2} \end{pmatrix}
  \end{equation*}
  and
  \begin{equation*}
    \gamma = -\frac12 + \sqrt{\mu^2+\Bigl(\frac{n-1}{2}-p \Bigr)^2}.
  \end{equation*}
  Consequently, we obtain if $N_{\gamma,\mu,p}(b)=0$,
  \begin{equation*}
    \bigg( \mu + \sqrt{\mu^2+\Bigl(\frac{n-1}{2}-p \Bigr)^2} \bigg)
      b_{2}
    = \Bigl( p-\frac{n-1}{2} \Bigr)b_{2},
  \end{equation*}
  hence $b_{2}=0$ since again, $\mu>0$.

  It remains to treat the case where $b \in \mcH_5$. Here, we
  have
  \begin{equation*}
    b= \begin{pmatrix} b_{1}\\b_{2} \end{pmatrix}.
  \end{equation*}
  Moreover, we know that $b_{1}=0$ if and only if
  $b_{2}=0$ due to the expression~\eqref{h5.ev} for the
  eigenforms.  In addition,
  \begin{equation*}
    \gamma_\pm 
    = -\frac12 
      + \bigg| \sqrt{\mu^2+\Bigl( \frac{n+1}{2}-p \Bigr)^2}\pm 1\bigg|.
  \end{equation*}
  Then, $N_{\gamma,\mu,p}(b)=0$ means
  \begin{equation*}
    \bigg( \mu + \bigg| \sqrt{\mu^2+\Bigl(\frac{n+1}{2}-p \Bigr)^2}
    \pm 1 \bigg| \bigg) b_{1}
    = \Bigl( \frac{n+3}{2}-p \Bigr) b_{1}
  \end{equation*}
   and also
   \begin{equation*}
     \bigg( \mu + \bigg| \sqrt{\mu^2+\Bigl(\frac{n+1}{2}-p\Bigr)^2}
     \pm 1 \bigg| \bigg) b_{2}
      = \Bigl( p-\frac{n-1}{2} \Bigr) b_{2}.
   \end{equation*}
   If $b\not=0$, we obtain first, that $\Bigl( p-\frac{n-1}{2} \Bigr)
   =\Bigl( \frac{n+3}{2}-p \Bigr)$, or $p=\frac{n+1}{2}$ (i.e.
   $a_{p}=0$) and secondly, that $\mu+|\mu\pm 1|=1$.

   So assume that $p=\frac{n+1}{2}.$ Since $\mu>0$, we have $\mu+|\mu
   +1|>1$ and as a consequence
   \begin{equation*}
     N_{\gamma_+(\mu^2),\mu,\frac{n+1}{2}}(b)=0
     \quad \Rightarrow \quad b=0.
   \end{equation*}

   As well, if $\mu>1,$ then $\mu+|\mu -1|=2\mu-1>1,$ so we have also
   \begin{equation*}
     \mu>1 \qquad \text{and} \qquad
     N_{\gamma_-(\mu^2),\mu,\frac{n+1}{2}}(b) = 0 
          \quad \Rightarrow \quad b=0.
   \end{equation*}

In the remaining case $\mu\in {]}0,1]$, we obtain $\mu+|\mu- 1|=1$
(and $\gamma_-(\mu^2)=1/2-\mu$), and
$N_{\gamma_-(\mu^2),\mu,(n+1)/2}=0$.
\end{proof}

 We study now the behaviour of the low energy forms $\phi_{m,\Lambda}$
 on the cones $\mcC_\eps^\pm$:
\begin{lem}
    \label{lem5}
  On $\mcC_\eps^\pm$, we have
  \begin{equation*}
    U_\pm\chi\phi_{m,\Lambda}=u_m+v_m
  \end{equation*}
  where
  \begin{equation*}
    \lim_{m\to\infty} \|v_m\|_{\Lsqr{\mcC_\eps^\pm}}=0
  \end{equation*}
  and $u_m$ is given as follows:
  \begin{enumerate}
  \item If $p \neq (n+1)/2$ or if there is no eigenvalue of $\Delta_{\Sigma}$
  for exact $p$-forms in the interval ${]}0,1{[}$, then
    \begin{equation*}
      u_m = \sum_{\gamma} 
         c_\gamma(m) t^{\gamma+1} F_\gamma(\lambda_m t^2)\sigma_\gamma
    \end{equation*}
    where the sum is finite over $\gamma\in [-\frac12,\infty{[}$, and the
    sequence $(c_\gamma(m))_m$ is bounded. Moreover, $F_\gamma \in \Ci{
      {[}0,\infty{[} } $ and
    $F_\gamma(0)=1$. Finally, $(\sigma_\gamma)_\gamma$ is
    independent of $t$ and is an orthonormal family in $\Lsqr
    {\Lambda^{p-1}T^*\Sigma} \oplus \Lsqr{\Lambda^{p}T^*\Sigma}$.
  \item If $p=(n+1)/2$ and if there is an eigenvalue $\mu^2$ of
    $\Delta_{\Sigma}$ for exact $p$-forms in the interval ${]}0,1{[}$,
    then we have
    \begin{equation*}
      u_m = \sum_{\gamma} 
         c_\gamma(m) t^{\gamma+1} F_\gamma(\lambda_m t^2)\sigma_\gamma
          + \sum_{\mu }
         c_\mu(m) t^{\mu-1/2} G_\gamma(\lambda_m t^2)\sigma_\mu,
    \end{equation*}
    where the first sum satisfies the same properties as in~(i) but we
    only have $\gamma\in \{-\frac12\}\cup [\frac12,\infty{[}$. In
    addition, the second sum is finite and runs over $\mu\in{]}0,1{[}$
    such that $\mu^2$ is an exact eigenvalue of $\Delta_{\Sigma}$,
    and the sequence $(c_\mu(m))_m$ is bounded.  In addition,
    $G_\gamma \in \Ci{ {[}0,\infty{[} } $ with $G_\gamma(0)=1$.
    Moreover, the family $\{U^{-1}_\pm \sigma_\gamma \}_\gamma \cup
    \{U^{-1}_\pm \sigma_\mu \}_\mu $ is orthonormal.
  \end{enumerate}
\end{lem}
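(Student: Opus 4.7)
My approach is to use the spectral decomposition on $\Sigma$ to reduce to a family of scalar Bessel-type ODEs on the cones, and then apply the analysis already carried out in \lref{lem:low.ev} and \lref{souslem}. Since $\phi_{m,\Lambda}$ lies pointwise in the finite-dimensional low-energy subspace of $A_0^2$, I choose a simultaneous orthonormal eigenbasis $(\tilde\sigma_\gamma)$ of $A_0^2$ (with eigenvalue $\mu^2<\Lambda^2$) and of the decomposition of \lref{decompos} (so that $A+A^2$ acts diagonally with eigenvalue $\gamma(\gamma+1)$). Writing $U_\pm\chi\phi_{m,\Lambda}=\sum_\gamma \sigma_{\gamma,\pm,m}(t)\,\tilde\sigma_\gamma$, the eigenvalue equation reduces on each cone to the scalar ODE of \sref{suite}, so that, as in~\eqref{bessel},
$$\sigma_{\gamma,\pm,m}(t)=c_{\gamma,\pm}(m)\,f_\gamma(t)+d_{\gamma,\pm}(m)\,g_\gamma(t).$$
The plan is to assemble the $f_\gamma$-terms (together with a finite family of genuinely-surviving $g_\gamma$-terms appearing only in case~(ii)) into $u_m$, and to push every other $g_\gamma$-contribution into $v_m$.

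The normalization $\|\phi_m\|=1$ together with the growth estimate~\eqref{growth} keeps the coefficients $c_{\gamma,\pm}(m)$ uniformly bounded. To control the $d$-coefficients I reuse verbatim the reasoning of \lref{lem:low.ev}: the zeroth-order transmission conditions~\eqref{transi1}--\eqref{transi2} give $d_{+,m}\eps^{-\gamma+1/2}\to b_\infty$ and $d_{-,m}\eps^{-\gamma+1/2}\to J a_\infty$, while the first-order ones~\eqref{transi1'}--\eqref{transi2'} force $N_{\gamma,\mu,p}(a_\infty)=N_{\gamma,\mu,p}(b_\infty)=0$. By \lref{souslem} this implies $a_\infty=b_\infty=0$, \emph{except} precisely when $p=(n+1)/2$, $\gamma=\gamma_-(\mu^2)=\frac12-\mu$ and $\mu\in{]}0,1]$. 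In every non-exceptional situation, the product $d_{\gamma,\pm}(m)\cdot\|g_\gamma\|_{\Lsqr{[\eps,1/2]}}$ then tends to zero, and the corresponding $g_\gamma$-contribution is absorbed into~$v_m$.

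Under the hypothesis of case~(i) the exceptional configuration cannot occur, so every $g_\gamma$-term vanishes in $\Lsymb[2]$ and $u_m$ reduces to a single $f_\gamma$-sum. Under the hypothesis of case~(ii), for each exact eigenvalue $\mu^2$ of $\Delta_\Sigma$ on $p$-forms with $\mu\in{]}0,1[$, we have $\gamma+\frac12=1-\mu\notin\N$, so the second solution $g_\gamma(t)=t^{-\gamma}G_\gamma(\lambda_m t^2)=t^{\mu-1/2}G_\gamma(\lambda_m t^2)$ is free of logarithmic corrections, and these finitely many terms are retained as the second sum. The claimed orthonormality of the family $(\sigma_\gamma)$ is inherited from the orthonormality of the initial eigenbasis.

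The main obstacle is the regime $\gamma<1/2$ in the non-exceptional case: there $\|g_\gamma\|_{\Lsymb[2]}$ stays uniformly bounded, so one genuinely needs $d_{\gamma,\pm}(m)\to 0$ rather than merely the product $d_{\gamma,\pm}(m)\cdot\|g_\gamma\|$. To obtain this, one must refine the expansion of the transmission condition~\eqref{transi1} to its subleading order in $\eps^{1/2-\gamma}$ and combine it with~\eqref{transi1'} together with the full strength of \lref{souslem} in order to kill the limit of $d_{\gamma,\pm}(m)$ itself.
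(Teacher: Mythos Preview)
Your overall strategy --- decompose along a simultaneous eigenbasis of $A_0^2$ and $A+A^2$, write each component as $c_m f_\gamma + d_m g_\gamma$, and sort the pieces into $u_m$ and $v_m$ --- is exactly the paper's. For $\gamma>\tfrac12$ your argument via \lref{lem:low.ev} and~\eqref{b1} is correct and matches the paper verbatim.

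There are, however, two genuine gaps in the range $\gamma\in[-\tfrac12,\tfrac12]$.

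\textbf{First}, your proposed fix for the ``main obstacle'' is not the right maneuver. You suggest refining~\eqref{transi1} to subleading order; this does not by itself yield $d_m\to 0$. What the paper does is \emph{eliminate} the handle coefficient $b_m$ by combining~\eqref{transi1} and~\eqref{transi1'}: on the handle side one has $\sigma_m'(L)=\delta_m\sigma_m(L)+O(\e^{-c/\eps})$, so the transmission conditions give the single relation
\[
  \delta_m\sigma_{+,m}(\eps)-\sigma_{+,m}'(\eps)+\tfrac{1}{\eps}\bigl(\tfrac n2-P\bigr)J\,\sigma_{+,m}(\eps)=O(\e^{-c/\eps}).
\]
Expanding the left side using~\eqref{bessel} and $\delta_m=\mu/\eps+O(\eps)$, the leading $\eps^{-\gamma-1}$ term is precisely $N_{\gamma,\mu,p}(d_m)$, and the remaining terms are $O(\eps^\gamma)$ (with a $|\log\eps|$ correction at $\gamma=\pm\tfrac12$). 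Hence $N_{\gamma,\mu,p}(d_m)=O(\eps^{1+2\gamma})$ directly, and invertibility of $N_{\gamma,\mu,p}$ from Sublemma~\ref{souslem} gives $d_m\to 0$.

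\textbf{Second}, in case~(ii) you omit a necessary step. The first sum in the statement runs only over $\gamma\in\{-\tfrac12\}\cup[\tfrac12,\infty[$, so for the exceptional values $\gamma=\tfrac12-\mu\in{]}{-}\tfrac12,\tfrac12{[}$ the term $c_m f_\gamma$ must go into $v_m$; you must therefore show $c_m\to 0$. The paper does this by rewriting the same eliminated relation as $\eps^{-\gamma}\frac{d}{dt}(t^\gamma\sigma_m)\big|_{t=\eps}+O(\eps)\sigma_m(\eps)=O(\e^{-c/\eps})$ and reading off $c_m=O(\eps^{1-2\gamma})$. Relatedly, for the borderline value $\mu=1$ (so $\gamma=-\tfrac12$) the operator $N_{\gamma,\mu,p}$ still vanishes, yet the paper shows $d_m=O(\eps^2\log\eps)\to 0$ from the same expansion, which is why the second sum in the statement is over $\mu\in{]}0,1{[}$ and not ${]}0,1]$.
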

\begin{proof}
  We continue with the same notation as in \lref{lem:low.ev}. We will
  only work on the behaviour on $\mcC_\eps^+$ since the other is
  similar. 
  We assume that $U_+(\phi_{m,\Lambda})=\sigma_m$ is a common
  eigenvector of both $A_ 0^2$ and $A^2+A$ for each $t$, i.e.,
  \begin{equation*}
    A_0^2\sigma_m=\mu^2\sigma_m 
         \qquad \text{and} \qquad 
    (A^2+A)\sigma_m=\gamma(\gamma+1)\sigma_m,
  \end{equation*}
  where we have dropped the subscript $+$.  The expression of
  $\sigma_m$ is given in~\eqref{bessel}.
  
  From~\eqref{growth}, \eqref{b1} and \lref{lem:low.ev} we conclude
  that for $\gamma>\frac12$ we have
  \begin{equation*}
    \|d_m g_\gamma\|^2\simeq h_\gamma(\eps) |d_m|^2=o(1),
  \end{equation*}
  if $m$ tends to $\infty$.

  We concentrate now on the case where $\gamma\in [-\frac12,\frac12]$.
  Equations~\eqref{transi1'} and~\eqref{transi1} imply, by elimination
  of $b_m$, that for a certain constant $c$ we have
   \begin{equation}
     \label{DN}
     \delta_m\sigma_m(\eps)-\sigma_m'(\eps) + \frac{1}{\eps} 
     \Bigl( \frac n2 - P \Bigr) J\sigma_m(\eps) =O\left(\e^{-c/\eps}\right).
   \end{equation}
   But from~\eqref{bessel} we conclude that for $\gamma\in
   {]}{-}\frac12,\frac12{[}$ we have
   \begin{multline*}
       \sigma_m'(\eps) 
       =  c_m \eps^\gamma (\gamma+1) F_\gamma(\lambda_m\eps^2)
        + 2 c_m \eps^{\gamma+2} \lambda_m F'_\gamma(\lambda_m \eps^2)\\
         {} -\gamma d_m \eps^{-\gamma-1} G_\gamma (\lambda_m\eps^2)
           + 2 d_m \eps^{-\gamma+1} \lambda_m G'_\gamma(\lambda_m\eps^2).
   \end{multline*}
    and if $\gamma=\pm1/2$ we obtain
    \begin{multline*}
        \sigma_m'(\eps) 
        = \eps^\gamma \bigl( 
             (c_m+ad_m\log\eps) (\gamma+1)+d_m a
          \bigr) F_\gamma(\lambda_m\eps^2) \\
         + 2 \eps^{\gamma+2}(c_m+d_m a \log\eps)  
                 \lambda_m F'_\gamma(\lambda_m \eps^2)\\
          {} -\gamma d_m \eps^{-\gamma-1} G_\gamma (\lambda_m\eps^2)
            + 2 d_m \eps^{-\gamma+1} \lambda_m G'_\gamma(\lambda_m\eps^2).
    \end{multline*}
   Note that $\delta_m$ also depends on $\eps$,
   namely, $\delta_m(\eps)=\mu/\eps +O(\eps)$. Therefore,
   equation~\eqref{DN} together with  the fact that the sequences
   $\{c_m\}_m$ and $\{d_m\}_m$ are bounded and the previous
   expressions for $\sigma_m'(\eps)$ leads to
   \begin{equation*}
     \Bigl( \gamma + \mu + \Bigl( \frac n2 - P \Bigr) J \Bigr)
     \eps^{-\gamma-1}G_\gamma(\lambda\eps^2) d_m
      =
      \begin{cases}
        O(\eps^\gamma)&\ \text{if $\gamma\ne \frac12$}\\
      O(\eps^{\frac12}|\log\eps|)&\ {\rm if\ } \gamma=\frac12
      \end{cases}
   \end{equation*}
   for $\gamma\in {]} {-}\frac12,\frac12{]}$ and
   \begin{equation*}
     \Bigl( \gamma + \mu + \Bigl( \frac n2 - P \Bigr) J \Bigr)
     \eps^{-\frac12}(\log\eps) aF_\gamma(\lambda\eps^2) d_m
      = O(\eps^{-\frac12})
   \end{equation*}
   for $\gamma=-1/2$.  Using the operator $N_{\gamma,\mu,p}$
   introduced in Sublemma~\ref{souslem} we have obtained
  \begin{equation*}
        N_{\gamma,\mu,p} (d_m)
       = \begin{cases}
           O(\eps^{1+2\gamma}) & \text{if $\gamma > -1/2$ 
              and $\gamma\not=1/2$}\\
	   O(\eps^{2}|\log\eps|) & \text{if $\gamma=1/2$}\\
           O(|\log\eps|^{-1})  & \text{if $\gamma = -1/2$}.
       \end{cases}
   \end{equation*}
   Hence, if the operator $N_{\gamma,\mu,p}$ is invertible, we have
   the same type of estimates for $d_m$ itself and
   $\lim_{m\to\infty}h_\gamma(\eps)|d_m|^2=0$ or
  \begin{equation*}
\lim_{m\to\infty}\|\sigma_m-c_m f_\gamma\|=0.
  \end{equation*}

  The result of Sublemma~\ref{souslem} shows that the operator
  $N_{\gamma,\mu,p}$ is invertible except in the case where $d_m$ is
  in $\mcH_5$, $p=(n+1)/2$ and $\gamma=\gamma_-(\mu^2)=1/2-\mu$.  The
  last equality imposes $\mu \in {]}0,1{]}$.  In particular, we have
  $\gamma\in {[}{-}\frac12,\frac12{[}$. Returning to
  Equation~\eqref{DN} we conclude then that
   \begin{equation*}
     \sigma_m'(\eps) + \frac{\frac12-\mu}{\eps} \sigma_m(\eps)
          + O(\eps) \sigma_m(\eps)
          = O(\e^{-c/\eps})
   \end{equation*}
   or equivalently
   \begin{equation*}
       \eps^{-\gamma} \frac{d}{dt} 
                 \Bigl( t^\gamma \sigma_m(t) \Bigr) \Bigl|_{t=\eps}\Bigr.
       + O(\eps)\sigma_m(\eps)
       = O(\e^{-c/\eps}).
   \end{equation*}
   Hence, if $\gamma=-1/2$, we obtain
   \begin{equation*}
     d_m = O(\eps^2\log \eps)
   \end{equation*}
   and as before $\lim_{m \to \infty} d_m=0$ and
   $\lim_{m\to\infty}\|\sigma_m-c_m f_\gamma\|=0$.\\
  If $\gamma \in {]}{-}\frac 12,\frac12{[}$, we have
   \begin{multline*}
       \eps^{-\gamma} \frac{d}{dt} 
            \Bigl( t^\gamma \sigma_m(t) \Bigr)\Bigl|_{t=\eps}\Bigr. 
       = c_m(2\gamma+1) \eps^\gamma
       F_\gamma(\lambda_m\eps^2)+
      c_m 2\eps^{\gamma+2} \lambda_mF'_\gamma(\lambda_m\eps^2)\\
       +\eps^{-\gamma+1} 2\lambda_mG'_\gamma(\lambda_m\eps^2)d_m.
   \end{multline*}
   and therefore
   \begin{equation*}
     c_m = O(\eps^{-2\gamma+1}), 
        \qquad \text{i.e.} \qquad
     \lim_{m\to\infty} c_m=0.
   \end{equation*}
 \end{proof}

\subsubsection{Harmonic terms}
It remains now to describe the behaviour of the harmonic components
$\phi_m^1$ and $\phi_m^2$.  We restrict our analysis to the space
$\mcH_1$, since $\mcH_1$ and $\mcH_2$ are dual by the Hodge-$*$
operator.

Again, we let $(\beta_{s,m},0)=U_s \chi\phi_m^1$ for $s=\emptyset,+,-$ be
the transformed pair of forms corresponding to the the handle and the
cones, respectively.  We know that on the handle, $\beta_m$ satisfies
the equation
\begin{equation}
  \label{EDO1}
  -\beta_m''=\lambda_m \beta_m \qquad \text{on $[0,L]$},
\end{equation}
whereas on the cones, $\beta_{\pm,m}$ fulfills
\begin{equation}
  \label{EDO2}
  -\beta_{\pm,m}''+\frac{\nu(\nu+1)}{t^2}
  \beta_{\pm,m}=\lambda_m\beta_{\pm,m},\hspace{1cm}\text{ with }\nu=n/2 - p +1.
\end{equation}

If $\nu\neq 0$ we put $\gamma=-\frac12+ \bigl| \frac{n+3}{2} -p
\bigr|=-\frac12+\bigl|\nu+ \frac 12 \bigr|$ as in \eqref{gam3} with $\mu=0$. 
The transmission conditions~\eqref{recolfq} and~\eqref{recolop} now reads as
\begin{equation}
  \label{trans1}
  \beta_m(L)=\beta_{+,m}(\eps),\qquad \beta_m(0)=-\beta_{-,m}(\eps)
\end{equation}
and
\begin{equation}
  \label{trans2}
  \beta_m'\Bigl(\frac{L\pm L}{2}\Bigr)
  = \beta_{\pm,m}'(\eps)+\frac  \nu \eps \beta_{\pm,m}(\eps)
  = \eps^{-\nu} \frac{d}{d t}
        \Bigl(t^\nu \beta_{\pm,\eps}(t)\Bigr)\Bigl|_{t=\eps}\Bigr..
\end{equation}

Since the $\Lsymb[2]$-norm of $\beta_{s,m}$ is bounded, it follows
from equation~\eqref{EDO1} and the transmission
conditions~\eqref{trans1} and~\eqref{trans2}, that $\beta_m(0)$,
$\beta_m(L)$, $\beta'_m(0)$, $\beta'_m(L)$ and $\beta_{\pm,m}(\eps)$
are all bounded sequences. Hence after passing to a subsequence, we
can assume that these sequences converge.  Moreover, from the
quadratic form expression~\eqref{locform}, we also know that there is
a uniform constant $C$ such that
\begin{equation}
  \label{fq}
  \sum_{s=\pm} \Bigl( \int_\eps^{1}
          \Bigl(|\beta'_{s,m}|^2 
          +\frac{\nu(\nu+1)}{t^2}  |\beta_{s,m}|^2\Bigr)dt
           -\frac 1 \eps \nu|\beta_{s,m}(\eps)|^2 \Bigr)
           + \int_0^L|\beta_m'|^2 dt \le C.
\end{equation}

We express the solutions of~\eqref{EDO2} as in \eqref{bessel}:
\begin{equation*}
 \beta_{\pm,m}(t) =c_{\pm,m} f_\gamma(t) + d_{\pm,m} g_\gamma(t).
\end{equation*}
As a consequence of the estimate \eqref{growth} on the 
$\Lsymb[2]$-norm of $\beta_{\pm,m},$  we obtain 
\begin{equation}
  \label{dinfini}
  c_{\pm,m} =O(1) 
     \qquad \text{and} \qquad 
  d_{\pm,m} =
  \begin{cases}
        O\bigl(\eps^{\gamma-1/2}  \bigr)  & \text{if $\gamma>1/2$,}\\
        O\bigl(|\log \eps |^{-1/2}\bigr)& \text{if $\gamma=1/2$,}\\
        O(1)                             & \text{if $\gamma<1/2$.}
  \end{cases}
\end{equation}
Again, after passing to a subsequence, we can assume that the
sequences $\{c_{\pm,m}\}_m$ converge to $c_{\pm,\infty}$.

But now from the transmission condition~\eqref{trans1} we know that
$\beta_{\pm,m}(\eps)$ and $c_{\pm,m}$ are bounded, and as a
consequence, $(d_m g_\gamma(\eps))_m$ is also bounded so
\begin{equation}
  \label{dinf} 
  \gamma \ge 0 \qquad \text{implies} \qquad 
   d_{\pm,m}=O(\eps^{\gamma}).
\end{equation}

In particular, we have 
\begin{cor} 
  \label{cor1}
  If $\gamma>0$, i.e., $\nu \notin\{-1,-1/2,0\}$, then
  \begin{equation*}
    \norm[{\Lsymb[2]}] {\beta_{\pm,m} - c_{\pm,\infty} f_\gamma} \to 0.
  \end{equation*}
\end{cor}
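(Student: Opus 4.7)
The plan is to exploit the explicit Bessel-type decomposition
\begin{equation*}
  \beta_{\pm,m}(t) = c_{\pm,m} f_\gamma(t) + d_{\pm,m} g_\gamma(t)
\end{equation*}
already written down in~\eqref{bessel}, and to show that the $g_\gamma$-component becomes $\Lsymb[2]$-negligible while the $f_\gamma$-term converges to $c_{\pm,\infty} f_\gamma$. By the triangle inequality,
\begin{equation*}
  \|\beta_{\pm,m} - c_{\pm,\infty} f_\gamma\|_{\Lsymb[2]([\eps,1])}
   \le |c_{\pm,m} - c_{\pm,\infty}| \cdot \|f_\gamma\|_{\Lsymb[2]([\eps,1])}
     + |d_{\pm,m}| \cdot \|g_\gamma\|_{\Lsymb[2]([\eps,1])}.
\end{equation*}
Since $f_\gamma(t) = t^{\gamma+1} F_\gamma(\lambda_m t^2)$ with $\gamma+1>0$, $\lambda_m \to \lambda$ bounded and $F_\gamma$ entire, the factor $\|f_\gamma\|_{\Lsymb[2]([\eps,1])}$ stays bounded as $\eps \to 0$. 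Combined with $c_{\pm,m} \to c_{\pm,\infty}$ (already extracted just before the corollary), the first summand vanishes in the limit.

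It remains to control the $d_{\pm,m}\, g_\gamma$ term. From \eqref{growth} we read off $\|g_\gamma\|_{\Lsymb[2]([\eps,1])}^2$ is of order $h_\gamma(\eps)$, while \eqref{dinf} gives $d_{\pm,m} = O(\eps^\gamma)$ as soon as $\gamma \ge 0$. Thus it suffices to check that $h_\gamma(\eps)\, \eps^{2\gamma} \to 0$ under the hypothesis $\gamma > 0$. Using the definition of $h_\gamma$ this splits into three cases:
\begin{equation*}
  h_\gamma(\eps)\, \eps^{2\gamma} \sim
  \begin{cases}
    \eps^{1-2\gamma}\cdot \eps^{2\gamma}= \eps
       & \text{if $\gamma > 1/2$,}\\
    |\log \eps|\cdot \eps
       & \text{if $\gamma = 1/2$,}\\
    \eps^{2\gamma}
       & \text{if $0<\gamma<1/2$,}
  \end{cases}
\end{equation*}
each of which tends to $0$ as $\eps=\eps_m\to 0$.

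There is essentially no obstacle beyond this bookkeeping, since all the difficult work has been done in Lemmas~\ref{lem:high.ev}--\ref{lem5} (in particular, the crucial decay $d_{\pm,m}=O(\eps^\gamma)$ already follows from the transmission condition~\eqref{trans1} combined with boundedness of $c_{\pm,m}$ and $\beta_{\pm,m}(\eps)$). The role of the hypothesis $\gamma>0$ is precisely to make the decay $\eps^{2\gamma}$ of $|d_{\pm,m}|^2$ dominate the possibly singular growth of $h_\gamma(\eps)$; at $\gamma=0$ (i.e.\ $\nu \in \{-1,0\}$) or at $\gamma=-1/2$ (i.e.\ $\nu=-1/2$) this dominance fails, which is exactly why those exceptional values are excluded in the statement.
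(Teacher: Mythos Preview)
Your proof is correct and follows the same strategy as the paper: invoke the decomposition~\eqref{bessel}, use~\eqref{dinf} for $d_{\pm,m}=O(\eps^\gamma)$, and combine with the growth~\eqref{growth} of $g_\gamma$ to kill the singular part. You are in fact slightly more careful than the paper's written argument, which bounds $\|\beta_{\pm,m}-c_{\pm,\infty}f_\gamma\|$ directly by $C\sqrt{h_\gamma(\eps)}\,|d_{\pm,m}|$ without isolating the $(c_{\pm,m}-c_{\pm,\infty})f_\gamma$ contribution; your triangle-inequality split makes that step explicit. One harmless redundancy: your case $0<\gamma<1/2$ is vacuous here, since $\gamma=-\tfrac12+|\nu+\tfrac12|$ with $\nu\in\tfrac12\Z$ forces $\gamma\in\{-\tfrac12,0,\tfrac12,1,\dots\}$, which is why the paper records only the outcomes $O(\sqrt{\eps})$ for $\gamma>1/2$ and $O(\sqrt{\eps|\log\eps|})$ for $\gamma=1/2$.
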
 
\begin{proof}
  By the estimate~\eqref{growth}, there exists a constant $C>0$ such
  that
  \begin{equation*}
    \norm[{\Lsymb[2]}] {\beta_{\pm,m} -
      c_{\pm,\infty} f_\gamma}\leq C\sqrt{h_\gamma(\eps)}|d_{\pm,m}|.
  \end{equation*}
  By the preceding remark and~\eqref{growth}, we arrive at
  \begin{equation*}
    \norm[{\Lsymb[2]}] {\beta_{\pm,m} -  c_{\pm,\infty} f_\gamma}
    =
    \begin{cases}O(\sqrt{\eps})& \text{ if }\gamma>1/2\\
      O(\sqrt{\eps|\log\eps|}) & \text{ if }\gamma=1/2.
    \end{cases}
  \end{equation*}
\end{proof}
We study now the limit boundary conditions.
\begin{lem}
  \label{lem7}
  If $p-1\geq \frac{n+1}{2}$ or $\nu \le -1/2$ (and therefore
  $\gamma=-1-\nu$), then we obtain, at the limit, the Dirichlet
  boundary conditions:
  \begin{equation*}
    \lim_{m\to\infty} \beta_m(0)=0
     \qquad \text{and} \qquad
    \lim_{m\to\infty} \beta_m(L)=0.
  \end{equation*}
\end{lem}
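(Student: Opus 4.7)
The plan is to reduce the Dirichlet conditions to the statement $\beta_{\pm,m}(\eps)\to 0$ via the transmission identities~\eqref{trans1}, and then exploit the Bessel expansion~\eqref{bessel}, $\beta_{\pm,m}(t)=c_{\pm,m}f_\gamma(t)+d_{\pm,m}g_\gamma(t)$, with index $\gamma=-1-\nu$. Under the hypothesis $\nu\le -\tfrac12$ one has $\gamma\ge -\tfrac12$, so $\gamma+1\ge\tfrac12>0$. Since $c_{\pm,m}=O(1)$ by~\eqref{dinfini}, the regular contribution $c_{\pm,m}f_\gamma(\eps)=O(\eps^{\gamma+1})$ tends to zero automatically, and the whole task reduces to controlling the singular part $d_{\pm,m}g_\gamma(\eps)$.

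I would handle the boundary case $\nu=-\tfrac12$ directly: there $\gamma=-\tfrac12<\tfrac12$, so~\eqref{dinfini} gives $d_{\pm,m}=O(1)$, while $g_\gamma(\eps)=O(\eps^{1/2}|\log\eps|)$, and the product vanishes. For $\nu<-\tfrac12$ I would extract a sharper decay for $d_{\pm,m}$ from the derivative transmission condition~\eqref{trans2}. The handle component $\beta_m$ solves the ODE~\eqref{EDO1} on $[0,L]$ with bounded eigenvalue $\lambda_m$ and bounded $L^2$-norm; the energy bound~\eqref{fq} additionally provides a uniform $H^1$-bound on the segment, so elliptic regularity for the second-order ODE yields uniform $H^2$-bounds and in particular $\beta_m'(0),\beta_m'(L)=O(1)$. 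Combining with~\eqref{trans2} gives $\frac{d}{dt}(t^\nu\beta_{\pm,m})(\eps)=\eps^\nu\cdot O(1)$.

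The key algebraic observation I would use is that $\gamma=-1-\nu$ makes the map $t\mapsto t^\nu\beta_{\pm,m}(t)$ particularly clean: $t^\nu f_\gamma(t)=F_\gamma(\lambda_m t^2)$ and $t^\nu g_\gamma(t)=t^{2\nu+1}G_\gamma(\lambda_m t^2)$, up to a harmless logarithmic correction when $\gamma+\tfrac12\in\N$. Differentiating and keeping only the term of lowest order in $\eps$ reduces the boundedness statement to $(2\nu+1)d_{\pm,m}\eps^{2\nu}G_\gamma(0)=O(\eps^\nu)$. Since $2\nu+1\ne 0$ and $G_\gamma(0)=1$, this forces the sharp estimate $d_{\pm,m}=O(\eps^{-\nu})=O(\eps^{\gamma+1})$, whence $d_{\pm,m}g_\gamma(\eps)=O(\eps)\to 0$. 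The main subtlety I anticipate is verifying that the logarithmic correction in the exceptional cases $\gamma+\tfrac12\in\N$, $\gamma\ne -\tfrac12$, contributes only an $O(\eps^{2\gamma+2}|\log\eps|)$ term once the sharp bound on $d_{\pm,m}$ is in place; this is bookkeeping with the same Bessel ansatz and introduces no new idea.
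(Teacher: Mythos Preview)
Your argument is correct, but the paper takes a markedly shorter route. For $\nu<-\tfrac12$ the paper simply reads off the conclusion from the quadratic form bound~\eqref{fq}: since then $\nu(\nu+1)\ge 0$ and $-\nu>0$, every term on the left is non-negative, and in particular $\frac{|\nu|}{\eps}|\beta_{\pm,m}(\eps)|^2\le C$, giving $\beta_{\pm,m}(\eps)=O(\sqrt{\eps})$ in one line --- no Bessel expansion, no transmission condition for the derivative. For $\nu=-\tfrac12$ the paper uses a Hardy-type inequality obtained from the substitution $v=t^{-1/2}\beta_{\pm,m}$, $\varphi=\sqrt{|\log t|}\,v$, which yields the sharper rate $\beta_{\pm,m}(\eps)=O(\sqrt{\eps|\log\eps|})$ compared with your $O(\sqrt{\eps}\,|\log\eps|)$. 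Your approach instead mirrors the Bessel-coefficient bookkeeping of \lref{lem5}, \lref{lem8} and \cref{cor:hp}, extracting $d_{\pm,m}=O(\eps^{-\nu})$ from the first-order transmission~\eqref{trans2}; this is more systematic and fits the surrounding toolkit, whereas the paper's argument is more elementary and self-contained. One small remark: your invocation of~\eqref{fq} and elliptic regularity to get $\beta_m'(0),\beta_m'(L)=O(1)$ is an unnecessary detour --- this bound is already recorded just after~\eqref{trans2}, following directly from the explicit sinusoidal form of the solutions of~\eqref{EDO1} with bounded $\lambda_m$ and bounded $\Lsymb[2]$-norm.
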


\begin{proof}
  If $\nu<-1/2$ then $\nu(\nu+1)\geq 0$ and the estimate of the
  quadratic form~\eqref{fq} gives
  \begin{equation*}
    \beta_\eps(0) = -\beta_{-,\eps}(\eps)=O(\sqrt\eps), \qquad
    \beta_\eps(L) =  \beta_{+,\eps}(\eps)=O(\sqrt\eps).
  \end{equation*}
 Now suppose that $\nu=-1/2$. In this case the
  estimate~\eqref{fq} gives
  \begin{equation*}
    \int_\eps^{1} |\beta_{\pm,m}'(t)|^2dt
     - \int_\eps^{1} \frac{1} {4t^2}| \beta_{\pm,m}(t)|^2dt
     + \frac{1}{2\eps} |\beta_{\pm,m}(\eps)|^2 
    \le C.
  \end{equation*}
  However, after integration by parts, the left hand side of this
  inequality is
  \begin{equation*}
    \int_\eps^1 t \Bigl| \frac{d}{dt} 
                 \Bigl(t^{-1/2}\beta_{\pm,m}(t)\Bigr) \Bigr|^2 dt.
  \end{equation*}
  Let $v \in \Ccic{{[}\eps,1{[}}$ and let $\varphi(t)=\sqrt{|\log t|}\,
  v(t)$, then we have
  \begin{align*}
    \int_\eps^1 t |\varphi'(t)|^2 dt
    & = \int_\eps^1|v(t)|^2  \frac{dt}{4t|\log t|}
      + \int_\eps^1|v'(t)|^2 |\log(t)|dt
      - \int_\eps^1v(t) v'(t) \, dt\\
    &\ge -\int_\eps^1 v(t) v'(t) \, dt\\
    &= \frac12 |v(\eps)|^2.
  \end{align*}
  Applying the former estimate with $v(t)=(t |\log t|)^{-1/2}
  \beta(t)$, we obtain
  \begin{equation*}
    \beta_{\pm,m}(\eps)=O\bigl(\sqrt{\eps|\log \eps|}\bigr)
  \end{equation*}
  which proves the claim.
\end{proof}

We focus now on the case where $\nu>0$ and consequently $\gamma=\nu$.
\begin{lem}
 \label{lem8}
 If $p-1\leq\frac{n-1}{2},$ or $\nu\ge \frac12$ (and therefore
 $\gamma=\nu$), then we obtain, at the limit, the Neumann boundary
 conditions
  \begin{equation*}
    \lim_{m\to\infty} \beta'_m(0)=0
         \qquad \text{and} \qquad
    \lim_{m\to\infty} \beta'_m(L)=0.
  \end{equation*}
\end{lem}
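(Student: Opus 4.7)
The plan is to evaluate the boundary derivatives using the second-order transmission condition \eqref{trans2}, which expresses
\[
  \beta'_m\bigl((L\pm L)/2\bigr) = \eps^{-\nu}\frac{d}{dt}\bigl(t^{\nu}\beta_{\pm,m}(t)\bigr)\Big|_{t=\eps}.
\]
The reason this particular form is useful is that, after multiplying by $t^\nu$, the two basic cone-solutions simplify: $t^{\nu}f_{\nu}(t) = t^{2\nu+1}F_{\nu}(\lambda_m t^{2})$ vanishes at $t=0$ to order $2\nu+1\ge 2$, while $t^{\nu}g_{\nu}(t) = G_{\nu}(\lambda_m t^{2})$ is smooth at $t=0$ (generically). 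So when we differentiate and evaluate at $t=\eps$, each of these contributions is forced to carry a positive power of $\eps$.

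The next step is to plug in the expansion $\beta_{\pm,m}(t)=c_{\pm,m}f_{\nu}(t)+d_{\pm,m}g_{\nu}(t)$ from \eqref{bessel}. A direct differentiation of $t^{2\nu+1}F_{\nu}(\lambda_m t^{2})$ at $t=\eps$ gives terms of order $\eps^{2\nu}$, while differentiating $G_{\nu}(\lambda_m t^{2})$ gives a term of order $\eps$. Multiplying back by $\eps^{-\nu}$ yields
\[
  \beta'_m\bigl((L\pm L)/2\bigr)
     = c_{\pm,m}\,O(\eps^{\nu})+d_{\pm,m}\,O(\eps^{1-\nu}).
\]
Invoking the already established bounds $c_{\pm,m}=O(1)$ from \eqref{dinfini} and $d_{\pm,m}=O(\eps^{\nu})$ from \eqref{dinf} (valid since here $\gamma=\nu\ge 1/2\ge 0$), both terms become at worst $O(\eps^{\nu})+O(\eps)$, which tend to zero because $\nu>0$. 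Passing to the convergent subsequence extracted earlier, the Neumann boundary conditions $\lim_{m\to\infty}\beta'_m(0)=\lim_{m\to\infty}\beta'_m(L)=0$ follow.

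The main obstacle I foresee is the threshold case $\nu=1/2$ (and more generally $\nu+1/2\in\N$, which in the relevant range coincides with $\nu=1/2$), where $g_{\nu}$ acquires the logarithmic correction $a\log(t)\,f_{\nu}(t)$. Then
\[
  t^{\nu}g_{\nu}(t)
   = G_{\nu}(\lambda_m t^{2}) + a\log(t)\cdot t^{2\nu+1}F_{\nu}(\lambda_m t^{2}),
\]
whose derivative at $t=\eps$ is $O(\eps\lvert\log\eps\rvert)$. Combined with the sharper bound $d_{\pm,m}=O(\lvert\log\eps\rvert^{-1/2})$ from \eqref{dinfini} (which just beats the logarithmic blow-up) and the prefactor $\eps^{-\nu}=\eps^{-1/2}$, one still obtains a contribution of order $O(\sqrt{\eps}\,\lvert\log\eps\rvert^{1/2})\to 0$. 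Once this bookkeeping is handled, the argument is uniform over the two subcases $\nu=1/2$ and $\nu>1/2$, and the lemma follows.
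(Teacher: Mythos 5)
Your proposal is correct and follows essentially the same route as the paper: both start from the transmission condition~\eqref{trans2}, expand $\beta_{\pm,m}$ in terms of $f_\nu$ and $g_\nu$, differentiate $t^\nu\beta_{\pm,m}(t)$ at $t=\eps$, and feed in the bounds on $c_{\pm,m}$ and $d_{\pm,m}$. Two small remarks: first, in the threshold case you invoke $d_{\pm,m}=O(|\log\eps|^{-1/2})$ from~\eqref{dinfini}, whereas the paper uses the sharper $d_{\pm,m}=O(\eps^\nu)=O(\sqrt\eps)$ from~\eqref{dinf}; your weaker bound still closes the estimate (giving $O(\sqrt\eps|\log\eps|^{1/2})$ instead of $O(\eps|\log\eps|)$), so this is only a cosmetic difference. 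Second, the parenthetical ``which in the relevant range coincides with $\nu=1/2$'' is not quite right --- $\nu+1/2\in\N$ also occurs for $\nu=3/2,5/2,\dots$ --- but this is harmless because for $\nu\ge 3/2$ the logarithmic correction $a\log t\, f_\nu(t)$ contributes only $O(\eps^{2\nu}|\log\eps|)$ after differentiation, which is subdominant to the $O(\eps)$ term from $G_\nu$; the genuine threshold where the log dominates is indeed only $\nu=1/2$, exactly the case the paper singles out.
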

\begin{proof}
  The first order transmission conditions~\eqref{trans2} imply that we
  have to look at the limit of the sequence formed by
  \begin{equation*}
    \eps^{-\nu} \frac{d}{d t}
                  \Bigl(t^\nu \beta_{\pm,\eps}(t)\Bigr)\Bigl|_{t=\eps}\Bigr..
  \end{equation*}
  But the limit of this sequence is
  \begin{equation*}
    \begin{cases}
      \lim_{m\to\infty} d_{\pm,m}\eps^{1-\nu}\lambda_m
                G'_{\gamma}(\lambda_m\eps^2)
            & \text{if $\nu\geq 1$,}\\
      \lim_{m\to\infty} 2 a d_{\pm,m}\eps^{1/2}(\log \eps)
                F_{\gamma}(\lambda_m\eps^2)
            & \text{if $\nu=1/2$.}
    \end{cases}
  \end{equation*}
    Now following~\eqref{dinf}, $d_{\pm,m}=O(\eps^\nu)$, and we obtain
  finally
  \begin{equation*}
    \Bigl|\beta_m'\Bigl(\frac{L\pm L}{2}\Bigr)\Bigr|
    = \begin{cases}
        O(\eps)             & \text{if $\nu>1/2$,}\\
        O(\eps|\log \eps|) & \text{if $\nu=1/2$}
    \end{cases}
  \end{equation*}
  and the result follows.
\end{proof}

\begin{cor}
  \label{cor:hp} 
  If $H^{n/2}(\Sigma)=0$, then we have
  \begin{equation*}
    U(\phi_{m,1})=(c_{\pm,m} f_\gamma,0)+r_m
  \end{equation*}
  on $\mcC_\eps^\pm$, where
  \begin{equation*}
    \lim_{m\to\infty} \|r_m\|_{\Lsymb[2]}=0,
  \end{equation*}
  the sequence $c_{\pm,m}$ converges to $c_{\pm,\infty}$ and
  $f_\gamma$ is given by \eqref{bessel} with $\gamma=-\frac12+ \bigl|
  \frac{n+3}{2} -p\bigr|$.
\end{cor}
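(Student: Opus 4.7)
The plan is to assemble the partial results established in this section and to treat separately the boundary values of $\nu := n/2 - p + 1$ for which Corollary \ref{cor1} does not apply. Writing, as in \eqref{bessel}, the solution on each cone as $\beta_{\pm,m}(t) = c_{\pm,m} f_\gamma(t) + d_{\pm,m} g_\gamma(t)$ with $\gamma = -\tfrac12 + |\nu + \tfrac12|$, the bound \eqref{dinfini} allows us to pass to a subsequence along which $c_{\pm,m} \to c_{\pm,\infty}$, so the task reduces to showing $\|d_{\pm,m} g_\gamma\|_{\Lsqr{\mcC_\eps^\pm}} \to 0$. Since $\gamma > 0$ is equivalent to $\nu \notin \{-1,-\tfrac12,0\}$, Corollary \ref{cor1} covers this entire regime at once, and only the three boundary values need a separate argument.

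When $\nu = 0$ one has $p - 1 = n/2$, which forces $n$ to be even, and the component $\phi_m^1$ is valued in harmonic $(n/2)$-forms on $\Sigma$. The hypothesis $H^{n/2}(\Sigma) = 0$ then makes $\mcH_1$ trivial, so $\phi_m^1 \equiv 0$ and there is nothing to prove. When $\nu = -1$ (forcing $\gamma = 0$ and $n$ even), the hypothesis $\nu \le -\tfrac12$ of Lemma \ref{lem7} is satisfied, yielding the Dirichlet limits $\beta_m(0), \beta_m(L) \to 0$. Together with the transmission conditions \eqref{trans1} this gives $\beta_{\pm,m}(\eps) \to 0$. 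Because $f_0(\eps) = \eps F_0(\lambda_m \eps^2)$ tends to $0$ while $g_0(\eps) = G_0(\lambda_m \eps^2)$ tends to $1$, and since $c_{\pm,m}$ is bounded, it follows that $d_{\pm,m} \to 0$; the growth bound \eqref{growth} together with $h_0(\eps) = O(1)$ then yields the required $L^2$-convergence.

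The remaining case $\nu = -\tfrac12$ (hence $\gamma = -\tfrac12$ and $n$ odd) is the main obstacle of the proof. Here the refined Hardy-type argument at the end of the proof of Lemma \ref{lem7} provides the sharpened boundary estimate $\beta_{\pm,m}(\eps) = O(\sqrt{\eps |\log \eps|})$. Combining this with the expansions $f_{-1/2}(t) \sim t^{1/2}$ and $g_{-1/2}(t) \sim t^{1/2}\log t$ near $t = 0$, and using again that $c_{\pm,m}$ stays bounded, one extracts the sharper control $d_{\pm,m} = O(|\log \eps|^{-1/2})$. Since $\int_0^{1/2} t(\log t)^2\,dt$ is finite, this suffices to conclude $\|d_{\pm,m} g_{-1/2}\|_{\Lsqr{\mcC_\eps^\pm}} \to 0$. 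The only non-routine point is precisely this logarithmic balance: the almost-borderline growth of $g_{-1/2}$ near the apex must be offset exactly by the logarithmic improvement obtained from the sharpened Hardy inequality of Lemma \ref{lem7}. Everything else is an assembly of previously established ingredients.
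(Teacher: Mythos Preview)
Your proof is correct and follows essentially the same approach as the paper's own proof: both reduce to Corollary~\ref{cor1} for $\gamma>0$, eliminate the case $\nu=0$ via the hypothesis $H^{n/2}(\Sigma)=0$, and handle the remaining boundary cases $\nu=-1$ and $\nu=-1/2$ by using the boundary smallness of $\beta_{\pm,m}(\eps)$ (from the quadratic form estimate~\eqref{fq}, respectively the sharpened Hardy argument in the proof of Lemma~\ref{lem7}) to force $d_{\pm,m}\to 0$. The only cosmetic difference is that for $\nu=-1$ the paper quotes the quadratic form bound directly to obtain $\beta_{\pm,m}(\eps)=O(\sqrt\eps)$, whereas you route through the statement of Lemma~\ref{lem7}; both give the same conclusion.
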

\begin{proof} 
  With the preceding notations we have to show that
  \begin{equation*}
    \norm[{\Lsymb[2]}] {\beta_{\pm,m} - c_{\pm,\infty} f_\gamma} \to 0.
  \end{equation*}
  Recall that $\gamma=-\frac12+\bigl|\nu+ \frac 12 \bigr|$ with
  $\nu=n/2 - p +1$. \cref{cor1} fulfills the case $\gamma > 0$. If
  $\gamma\leq 0$ then, by the estimate~\eqref{growth},
  $\norm[{\Lsymb[2]}] {\beta_{\pm,m} - c_{\pm,\infty} f_\gamma}$ is
  controlled by $|d_{\pm,m}|$. It remains to show that
  $\lim_{m\to\infty}d_{\pm,m}=0$ if $\gamma\leq 0$ (and $\nu\neq 0$ by
  hypothesis).

  The case $\gamma=0$ corresponds only to $\nu=-1$.  The boundedness
  of the quadratic form gives then $\beta_{\pm,m}(\eps)=O(\sqrt\eps)$.
  But this implies, by the expression of the solutions
  of~\eqref{EDO2}, that
  \begin{equation*}
    d_{\pm,m}=O(\sqrt\eps).
  \end{equation*}
    
  The case $\gamma=-1/2$ corresponds to $\nu=-1/2$. In this case we
  have already seen that
  \begin{equation*}
    \beta_{\pm,m} (\eps) = O\bigl(\sqrt{\eps|\log \eps|}\bigr).
  \end{equation*}
  The expression of the solutions of~\eqref{EDO2} for $\gamma=-1/2$
  implies that
  \begin{equation*}
    d_{\pm,m} = O \bigg( \frac{1}{\sqrt{|\log \eps|}} \bigg)
  \end{equation*}
  and the result follows.
\end{proof}

\begin{rem}
  \label{nu=0}
  If the cohomology group $H^{n/2}(\Sigma)$ is non-trivial, what
  happens in the case $\nu=0$, i.e., for forms of degree $p=n/2+1$?
  The quadratic form~\eqref{fq} becomes
  \begin{equation*}
    \sum_{s=\pm} \int_{\eps}^1|\beta'_{s,m}|^2+\int_0^L|\beta_m'|^2.
  \end{equation*}
  Actually, we are just on intervals and the transmission condition
  gives the limit situation.  From the sequence $\{\beta_m\}_m$, which
  is bounded in $\Sobsymb[1]$ on the global interval, one can extract
  a sequence which converges to an eigenform on $\overline M$ with
  eigenvalue $\lambda$, and the boundary values $\beta_\pm(0)$ and
  $\beta'_\pm(0)$ must satisfy the transmission conditions
  \begin{equation}
    \label{tran}
    \beta_- (0) =-\beta(0), \qquad
    \beta_+ (0) = \beta(L); \qquad
    \beta'_- (0)=\beta'(0), \qquad
    \beta'_+ (0)=\beta'(L)
  \end{equation}
  for $\beta$ satisfying $-\beta''=\lambda\beta$ on $[0,L]$.

  For instance if we come from the situation where $M=\R/\Z\times \Sigma$ is a
  $3$-torus and $\Sigma=\R^2/\Z^2 $ a generating torus, the limit problem 
  described here is \emph{not} decoupling.
\end{rem}

\section{The limit problem}
\label{sec:limit}
 We first recall the results of~\cite{BS} and~\cite{L} concerning the
 closed extensions of the operator $D=d+d^\ast$ on the manifold with
 conical singularities $\overline M$.  They are classified by the
 spectrum of its \emph{Mellin symbol}, which is here the operator with
 parameter $A+z$. In our case, we need two copies of $A+z$, since we have 
 two conical singularities.  Recall that $A$ is
 the operator defined in~\eqref{opeA} by
 \begin{equation*}
   A = \begin{pmatrix} 
           \dfrac n 2 -P &-D_0\\
           -D_0          & P - \dfrac{n}{2} 
       \end{pmatrix}.
 \end{equation*}
 If $\spec (A) \cap\ {]}{-}\frac12,\frac12{[}$ is empty then $D_{\max}
 = D_{\min}$. In particular, $D$ is essentially selfadjoint on the
 space of smooth functions with compact support \emph{away} from the
 conical singularities.
 Otherwise, the quotient $\dom(D_{\max})/\dom(D_{\min})$ is isomorphic
 to
 \begin{equation*}
   B_+ \oplus B_- \qquad \text{where} \qquad
   B_\pm := \bigoplus_{\gamma \in {]}{-}\frac12,\frac12{[}} 
              \Ker(A-\gamma).
 \end{equation*}
 More precisely, by Lemma~3.2 of~\cite{BS}, there is a surjective
 linear map
 \begin{equation*}
   \mcL = \mcL_+ \oplus \mcL_- \colon 
          \dom(D_{\max}) \rightarrow B_+\oplus B_-
 \end{equation*}
 with $\ker \mcL =\dom(D_{\min})$. Furthermore, we have the estimate
 \begin{equation*}
   \| u_\pm(t) - t^{-A} \mcL_\pm(\phi) \|_{\Lsqr{\Sigma}} 
     \leq C(\phi) |t\log t|^{1/2}
 \end{equation*}
 for $\phi\in\dom(D_{\max})$ and $u_\pm=U_\pm(\phi)$, where $U_\pm$ is
 defined in \sref{sec:lap}.

 Now to any subspace $W\subset B_+\oplus B_-$, we associate the
 operator $D_W$ with domain $\dom(D_W):=\mcL^{-1}(W)$. As a result
 of~\cite{BS}, all closed extensions of $D_{\min}$ are obtained by this
 way.  Remark that each $D_W$ defines a selfadjoint extension
 $(D_W)^\ast \circ D_W$ of the Hodge-Laplace operator, and we have
 $(D_W)^\ast=D_{\I (W^\perp)}$, where
 \begin{equation*}
   \I = \begin{pmatrix} 
                0 & \id\\
               -\id & 0   
        \end{pmatrix}, 
       \qquad \text{ie., \quad$\I(\beta, \alpha)=(\alpha,-\beta)$}.
 \end{equation*}
 \sloppy This extension is associated to the quadratic form
 $\phi\mapsto \|D\phi\|^2$ with domain $\dom(D_W)$.  We have already
 computed the spectrum of the operator $A^2+A$ restricted to the
 spaces $\mcH_1$, \dots, $\mcH_5$ in \sref{specA}.  It is expressed
 for each space $\mcH_i$ in the form $\gamma(\gamma+1)$ with $\gamma
 \ge -1/2$, where $\gamma$ is given in~\eqref{gam5}
 and~\eqref{gam3}--\eqref{gam2}.

 Hence the spectrum of $A$ is among the values $\gamma_\pm,
 -1-\gamma_\pm$ where $\gamma_\pm$ is given by~\eqref{gam5}, and the
 $\gamma, -1-\gamma$, for the $\gamma$ appearing
 in~\eqref{gam3}--\eqref{gam2}.\footnote{Using~\cite{BS}, we can calculate 
   explicitely the
   spectrum of $A$. In fact, $\spec (A)$ consists of the values
   $\gamma=\pm\frac 12\pm\sqrt{\mu^2+a^2_{p+1}}$, where $\mu^2$ runs
   over the spectrum of $\Delta_\Sigma$ acting on co-closed
   $p$-forms.}  We have to take care of the fact that the spaces are not 
 all stable under the action  
 of $A.$ Indeed $\mcH_1$ and $\mcH_2$ are stable by the action of
 $A$ and consequently the spectrum of $A$ contains $\frac n2-p+1 $ with
 multiplicity $b_{p-1}(\Sigma)$ and $p-\frac n2$ with multiplicity
 $b_{p}(\Sigma)$ where $p$ runs over $0,...,n$, $\mcH_5$ also is
 stable by $A$, but $\mcH_3$ and $\mcH_4$ are not.
 Nevertheless we remark that $A$ satisfies the relation
 $A\circ\I=-\I\circ A,$ and, if $Au=\gamma u$ with $u\in\mcH_5,$ then
 $A(\I u)=-\gamma\I u$ with $\I u$ living in the $\mcH_3 \oplus \mcH_4$
 components of {\em other} degrees.  Then, considering all the degrees
 together, $\sum_p(\mcH_3 \oplus \mcH_4)$ is stable under the action of $A$
 and its spectrum on this component is the opposite of its spectrum on $\mcH_5$.
 Thus the spectrum of $A$ is determined by its
 restriction on $\mcH_1$, $\mcH_2$, and $\mcH_5$.
  
For our concern we have the following result:
 \begin{lem}
   \label{lem6}
   Let $n$ be odd. Then the eigenvalues $\gamma$ of $A$ restricted to
   $\mcH_5$ with $\gamma \in {]}{-}\frac12,\frac12{[}$ are precisely
   the values $\gamma=\gamma_-(\mu^2)$ entering in the description of
   the spectrum of $A(A+1)$ with $a_p=0$, i.e., $p=(n+1)/2$, and
   $\mu\in{]}0,1{[},$ thus $\gamma_-(\mu^2)=\mu-\frac12.$
 \end{lem}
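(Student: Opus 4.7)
The plan is to diagonalize $A$ directly on the stable subspace $\mcH_5$ and then intersect the resulting spectrum with ${]}{-}\tfrac12,\tfrac12{[}$, exploiting the integrality of $a_p=(n+1)/2-p$ that $n$ odd provides. This avoids the detour through $A(A+1)$ and makes the sign bookkeeping transparent.

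First I take a typical pair $(\beta,\alpha)\in\mcH_5$ with $\beta$ co-exact of degree $p-1$ and $\alpha$ exact of degree $p$, both $\Delta_\Sigma$-eigenforms with common nonzero eigenvalue $\mu^2$ (harmonic contributions vanish, since a harmonic co-exact or harmonic exact form is zero). Because $d_0^\ast\beta=0$ and $d_0\alpha=0$, the eigen-equation $A(\beta,\alpha)=\gamma(\beta,\alpha)$ reduces to the pair
\begin{equation*}
  d_0^\ast\alpha=\bigl(a_p+\tfrac12-\gamma\bigr)\beta,\qquad
  d_0\beta=\bigl(\tfrac12-a_p-\gamma\bigr)\alpha.
\end{equation*}
Applying $d_0^\ast$ to the second equation and using $\Delta_\Sigma\beta=d_0^\ast d_0\beta=\mu^2\beta$ then yields the compatibility relation $(\tfrac12-\gamma)^2=\mu^2+a_p^2$, so that the spectrum of $A|_{\mcH_5}$ consists exactly of the values $\tfrac12\pm\sqrt{\mu^2+a_p^2}$ as $\mu^2$ ranges over the nonzero exact $p$-spectrum of $\Delta_\Sigma$; both branches are realised by genuine eigenvectors via the explicit Ansatz~\eqref{h5.ev}.

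Setting $s:=\sqrt{\mu^2+a_p^2}$, the branch $\tfrac12+s\geq\tfrac12$ is automatically excluded, while $\tfrac12-s\in{]}{-}\tfrac12,\tfrac12{[}$ iff $s\in{]}0,1{[}$. Since $n$ is odd, $a_p\in\Z$ and hence $a_p^2\in\{0,1,4,\dots\}$; the strict inequality $s<1$ therefore forces $a_p=0$, i.e.~$p=(n+1)/2$, and then $s=\mu\in{]}0,1{[}$. For such $\mu<1$, formula~\eqref{gam5} gives $\gamma_-(\mu^2)=-\tfrac12+|\mu-1|=\tfrac12-\mu$, which matches the eigenvalue $\tfrac12-s$ and completes the identification.

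There is no substantial obstacle here; the argument is essentially a one-line linear-algebra calculation once the reduction to $\mcH_5$ is carried out. The only mildly delicate points are (i) verifying that each of the two roots $\tfrac12\pm s$ of the quadratic actually lifts to a genuine $A$-eigenvector and not merely to a root of $A(A+1)$, which is immediate from~\eqref{h5.ev}, and (ii) checking that the eigenvalue lying in the critical interval corresponds to $\gamma_-(\mu^2)$ rather than $\gamma_+(\mu^2)$, which follows directly from the case analysis $|\mu-1|=1-\mu$ valid for $\mu\in{]}0,1{[}$.
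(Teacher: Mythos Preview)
Your argument is correct and follows the same route as the paper's own proof: write out the $2\times 2$ eigen-system for $A$ on $\mcH_5$, solve the resulting quadratic in $\gamma$, and use the integrality of $a_p$ for $n$ odd to force $a_p=0$ and $\mu\in{]}0,1{[}$. Your value $\gamma=\tfrac12-\mu=\gamma_-(\mu^2)$ is in fact the correct one (consistent with~\eqref{gam5} and with Sublemma~\ref{souslem}); the ``$\mu-\tfrac12$'' appearing in the lemma's final clause and in the paper's proof stems from a sign slip in the diagonal entries of the matrix displayed there for $A$ (they should read $\tfrac n2-p+1$ and $p-\tfrac n2$, not their negatives).
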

 \begin{proof}
   Let $\sigma=(\beta,\alpha)\in\mcH_5$  and denote the degree of
   $\alpha$ by $p$. Then $A$ is given by
   \begin{equation*}
     A = \begin{pmatrix} p-1-\frac{n}{2}&-d^\ast_0\\
                 -d_0&\frac{n}{2}-p\end{pmatrix},
   \end{equation*}
   so that $A\sigma =\gamma\sigma$ is equivalent to
   \begin{gather*}
     \Bigl(p-1-\frac{n}{2}-\gamma \Bigr) \beta
     =d_0^\ast\alpha 
            \qquad \text{and} \qquad
     \Bigl(\frac{n}{2}-p-\gamma \Bigr)\alpha=d_0\beta, \quad \text{i.e.,}\\
     \Delta_\Sigma\alpha
     =\Big( \bigl( \frac 12 + \gamma \bigr)^2 - a_p^2\Big) \alpha
            \qquad \text{and} \qquad
     \Delta_\Sigma\beta
     =\Big( \bigl( \frac 12+\gamma \bigr)^2 - a_p^2 \Big) \beta.
   \end{gather*}
   In particular, the latter equalities mean that there exists
   \begin{equation*}
     \mu^2 \in \spec (\Delta^p_{\Sigma,\mathrm c}) \cap 
               \spec (\Delta^{p-1}_{\Sigma,\mathrm{cc}})
     \qquad \text{with} \qquad 
     \gamma=-\frac 12\pm\sqrt{\mu^2+a_p^2}
   \end{equation*}
   where $\Delta_{\Sigma,\mathrm c}$ resp.\
   $\Delta_{\Sigma,\mathrm{cc}}$ denotes the Laplacian acting on closed
   resp.\ co-closed, forms.
   Now, $\gamma \in {]}{-} \frac 12, \frac 12{[}$ implies
   $\gamma=-\frac 12+\sqrt{\mu^2+a_p^2}$ with $a_p=0$ and $\mu^2\in
   {]}0,1{[}$ or with $a_p=\pm \frac 12$ and $\mu^2\in [0,\frac 34{[}$.
   Since we assumed that $n$ is odd, the unique possibility is $a_p=0$,
   i.e., $p=(n+1)/2$, and therefore $\gamma=\mu-\frac 12$ since
   $\mu^2\in {]}0,1{[}$.
   Reciproquely, if $a_p=0$ and $\Delta^p_{\Sigma,\mathrm c}
   \alpha=\mu^2\alpha$ with $\mu^2\in {]}0,1{[}$, then 
 $A(\beta,\alpha)=(\mu-\frac12)(\beta,\alpha)$ with
   $\beta=-d_0^\ast \alpha/\mu$, and also $\Delta^{p-1}_{\Sigma,
     \mathrm{cc}} \beta =\mu^2\beta.$ Then $\sigma=\sigma_\mu,$ with
   the notations of~\lref{lem5}.
 \end{proof}
 In fact we have proved more, namely: $\spec(A) \cap
 {]}{-}\frac12,\frac12{[}=\emptyset$ if and only if
 \begin{itemize} 
 \item the spectrum of $\Delta_\Sigma$ on exact $(n+1)/2$-forms (or
   co-exact $(n-1)/2$-forms) does not meet the interval $]0,1[$, for
   $n$ odd,
 \item the spectrum of $\Delta_\Sigma$ on $n/2$-forms does not meet the
   interval $[0,\frac 34{[}$, for $n$ even.
 \end{itemize}
 Indeed, in the last alternative, for $a_p=\pm\frac12$, i.e.,
 $p\in\{\frac n2, \frac n2 +1\}$, if $\Delta^p_{\Sigma,\mathrm c} \alpha
 =\mu^2\alpha$ with $\mu\neq 0$, then $\gamma=-\frac
 12+\sqrt{\mu^2+\frac 14}>0$ and again
 $A(\beta,\alpha)=\gamma(\beta,\alpha)$ with $\beta=-\frac{d_0^\ast
   \alpha}{\frac 12\pm\frac 12+\gamma}$, and also
 $\Delta^{p-1}_{\Sigma,\mathrm{cc}} \beta =\mu^2\beta$; we remark that
 either $\alpha$ is an exact $n/2$-form, or $\beta$ is a co-exact
 $n/2$-form.  The case $\mu=0$ corresponds in fact to components in
 $\mcH_1$ for $p=\frac n2 +1$ and $\mcH_2$ for $p=\frac n2.$ They have
 already been described and correspond to $n/2$-harmonic forms.

We can now describe the extensions of the Laplacian obtained for the limit 
operator; they depend on $p$:
\begin{itemize}
\item If $p\not\in \{(n+1)/2, n/2,n/2+1\}$ or if $p\in \{ n/2,n/2+1\}$
  and $b_{n/2}(\Sigma)=0$ or if $p=(n+1)/2$ and the spectrum of
  $\Delta_\Sigma$ on exact $(n+1)/2$-forms does not meet the interval
  $]0,1[$, then on the manifold part $\overline M$ the limit operator
  is the Friedrichs extension of the Hodge-Laplace operator, that is
  $D_{\max}\circ D_{\min}$ restricted to $\Lsqr{\Lambda^pT^*M}$. It is
  the Friedrichs extension of the Laplacian defined by the quadratic
  form $\sigma\mapsto \|D\sigma\|^2$ with domain $\dom(D_{\min})$.
\item If $p=(n+1)/2$ and $\Delta_\Sigma$ has exact
  $(n+1)/2$-eigenvalues in the interval $]0,1[$, then the limit
  operator is (on the manifold part) $D_{\min}\circ D_{\max}$
  restricted to $\Lsqr{\Lambda^{\frac{n+1}{2}}T^* M}$.  It is the
  Friedrichs extension of the Laplacian defined by the quadratic form
  $\phi\mapsto \|D\phi\|^2$ with domain $\dom(D_{\max})$.
\item In the case when $H^{n/2}(\Sigma)\neq \{0\},$ and $p=\frac n2 $
  or $p=\frac n2+1$, the limit operator does not come from a
  selfadjoint extension of the Hodge-Laplace operator for the conical
  manifold $\overline M$ but from a selfadjoint extension of an
  operator acting on
  \begin{equation*}
    \Ccic{\Lambda^p T^* \overline M\setminus S,g_0 } \oplus
    \Ccic{{]}0,L{[},\mcH^{p-1}(\Sigma) \oplus \mcH^p(\Sigma)}\ ,
  \end{equation*}
  where $\mcH^p(\Sigma)$ denotes the space of harmonic $p$-forms on
  $\Sigma$ and $S$ is the singular part of $\overline M$, that is two
  points corresponding to the shrunken manifold $\Sigma$ at the tip of
  each cone. This operator acts as the Laplacian on the first
  component and by $-d^2/dt^2$ on the last component.
  \begin{itemize}
  \item Suppose that $p=n/2$, then the limit operator is associated to
    the quadratic form
    \begin{equation*}
      (\varphi,\sigma)\mapsto q(\varphi,\sigma):=\int_{\overline{M}} 
      \left(|d\varphi|^2+|d^*\varphi|^2\right)d\vol+\int_0^L
      |\sigma'(t)|dt
    \end{equation*}
    with the domain $\dom(q)$ where $(\varphi,\sigma)\in \dom(q)$ if
    and only if the following conditions are satisfied:
    \begin{equation}
    \label{eq:defn2}
      \begin{split}
         & \varphi \in 
            \Lsqr{\Lambda^{n/2} T^*\overline M}\cap \dom(D_{\max})  \\ 
         & \mcL_\pm(\varphi)=(0,\alpha_\pm)\in 
                  \{0\}\oplus \mcH^{n/2}(\Sigma )\subset\ker A \\ 
         & \sigma=(\beta,\alpha)\in 
               \bigSob{[0,L],\mcH^{n/2-1}(\Sigma) \oplus 
                      \mcH^{n/2}(\Sigma)} \\ 
         & \alpha_-=\alpha(0) \mbox{ and } \alpha_+=\alpha(L)
       \end{split}
    \end{equation}
   \item Suppose that $p=n/2+1$, then the limit operator is associated
     to the quadratic form
    \begin{equation*}
      (\varphi,\sigma)\mapsto q(\varphi,\sigma):=\int_{\overline{M}} 
        \left(|d\varphi|^2+|d^*\varphi|^2\right)d\vol+\int_0^L
            |\sigma'(t)|dt
    \end{equation*}
    with the domain $\dom(q)$ where $(\varphi, \sigma)\in \dom(q)$ if
    and only if the following conditions are satisfied:
    \begin{equation}
      \label{eq:defn2pu}
      \begin{split}
        &\varphi\in \Lsqr{\Lambda^{n/2+1} 
                T^*\overline M}\cap \dom(D_{\max})\\
        &\mcL_\pm(\varphi)=(\beta_\pm,0) \in 
                \mcH^{n/2}(\Sigma)\oplus \{0\}\subset\ker A\\
        &\sigma=(\beta,\alpha)\in \bigSob{[0,L],\mcH^{n/2-1}(\Sigma)
                        \oplus \mcH^{n/2}(\Sigma)}\\
        & \beta_-=-\beta(0) \mbox{ and }\beta_+=\beta(L).
      \end{split} 
    \end{equation}
  \end{itemize}
\end{itemize} 
\subsection*{Proof of \tref{thmC}}

We are now able to prove our main convergence result, namely
\tref{thmC}. More generally, we show the following:
\begin{thm}
  \label{sansgap}
  If we drop the condition $H^{n/2}(\Sigma)= 0$, the convergence
  results are the same as in \tref{thmC} except for the degrees
  $p=n/2$ and $p=n/2+1$ where the spectrum of the Hodge-de Rham
  operator of the manifold $M_\eps$ acting on these $p$-forms
  converges to the spectrum of the limit problem described
  in~\eqref{eq:defn2}--\eqref{eq:defn2pu}.
\end{thm}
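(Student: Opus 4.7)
The strategy follows the standard two-sided argument for spectral convergence under collapse. I plan to prove the convergence of the $k$-th eigenvalue $\lambda_k^p(\eps)$ to the $k$-th eigenvalue $\mu_k^p$ of the limit operator described by~\eqref{eq:defn2}--\eqref{eq:defn2pu}, by establishing separately $\limsup_{\eps\to 0}\lambda_k^p(\eps)\le\mu_k^p$ (upper bound via test forms) and $\liminf_{\eps\to 0}\lambda_k^p(\eps)\ge\mu_k^p$ (lower bound via the convergence of eigenforms obtained in \sref{suite}). Since all preliminary work in \sref{suite} was done without assuming $H^{n/2}(\Sigma)=0$, the core of the proof is simply to reinterpret it in the coupled setting, and the only genuinely new input is the construction of approximating test forms.

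For the lower bound, I would start from a sequence $\phi_m$ of normalised eigenforms with $\lambda_m\to\lambda$, decompose it as in~\eqref{decomp}, and apply \lref{lem:high.ev} to kill $\phi_m^\Lambda$, \lref{lem:low.ev} to kill the low-energy non-harmonic part on the handle, and \lref{lem5} to control its Bessel-type behaviour on the cones. The harmonic pieces $\phi_m^1,\phi_m^2$ converge on the cones to terms of the form $c_\pm f_\gamma$ by \cref{cor1}, and their trace at $t=\eps$ contributes the data $\mcL_\pm(\varphi)$. On the handle, since $\rho_\eps=\eps$ is constant, the rescaled harmonic part $\beta_m$ satisfies $-\beta_m''=\lambda_m\beta_m$ on $[0,L]$ and its limit $\sigma\in\Sobsymb[1]([0,L],\mcH^{p-1}(\Sigma)\oplus\mcH^p(\Sigma))$ is tied to the cone traces through the transmission conditions~\eqref{recolfq} — exactly producing the last two lines of~\eqref{eq:defn2} and~\eqref{eq:defn2pu}. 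For the degrees where $\nu\neq 0$ (equivalently, when harmonic $n/2$-forms of the relevant type on $\Sigma$ are absent), Lemmas~\ref{lem7} and~\ref{lem8} force Dirichlet or Neumann conditions at the endpoints, which recovers the decoupled statement of \tref{thmC}; for the remaining degrees $p\in\{n/2,n/2+1\}$ with $H^{n/2}(\Sigma)\neq 0$, these lemmas are unavailable and one keeps the genuine coupling described in \rref{nu=0}, yielding the quadratic form and domain of~\eqref{eq:defn2}--\eqref{eq:defn2pu}. A spectral theory argument (cf.~\cite{AC2}) then gives $\liminf\lambda_k^p(\eps)\ge\mu_k^p$.

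For the upper bound I would run the classical min-max argument. Given an $L^2$-orthonormal family $(\varphi_i,\sigma_i)_{i=1}^k$ of eigenforms of the limit operator, I construct, for each $\eps>0$, a family $(\phi_{i,\eps})_{i=1}^k$ of test $p$-forms on $M_\eps$ as follows. On $M\setminus\mcV$ I take $\phi_{i,\eps}=\varphi_i$ (this is meaningful since $\varphi_i\in\dom(D_{\max})$ and decays controllably near the conical tips). On the cones $\mcC_\eps^\pm$ I use the asymptotic expansion $u_\pm(t)\sim t^{-A}\mcL_\pm(\varphi_i)$ provided by the structure theorem of~\cite{BS} to glue $\varphi_i$ smoothly to an explicit harmonic-type model. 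On the handle $\mcA_\eps$ I place the rescaled section $U^{-1}(\sigma_i)$ coming from the $\bigSob{[0,L],\mcH^{p-1}(\Sigma)\oplus\mcH^p(\Sigma)}$-component of the limit eigenform. The boundary terms~\eqref{recolfq} match to leading order thanks to the last condition in~\eqref{eq:defn2} resp.~\eqref{eq:defn2pu}, and the transmission conditions~\eqref{recolop} produce a controllable error that is absorbed in a standard cut-off. A direct computation, using~\eqref{locform} and the $1/\eps$ scaling in $\qf b$, shows that the Rayleigh quotient of the resulting trial space converges to $\mu_k^p$ and that the family is asymptotically $L^2$-orthonormal, giving $\limsup\lambda_k^p(\eps)\le\mu_k^p$.

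The main obstacle is the cut-off/gluing on the cones for the upper bound in the borderline case $p\in\{n/2,n/2+1\}$ with $H^{n/2}(\Sigma)\neq 0$: the element $\mcL_\pm(\varphi)$ lies in $\ker A$, which is exactly the subspace responsible for the logarithmic failure of $D_{\min}=D_{\max}$, so the natural gluing produces boundary traces that are only bounded (not $o(1)$). Managing this requires a two-scale ansatz of the form $\phi_{i,\eps}=\varphi_i+U_\pm^{-1}\bigl(\mcL_\pm(\varphi_i)-\sigma_i(\text{endpoint})\bigr)\chi_\eps$ with a cut-off concentrated in $[\eps,2\eps]$, exactly at the matching scale with the handle; here one exploits the coupling condition $\alpha_\pm=\alpha(\cdot)$ (resp. $\beta_\pm=\pm\beta(\cdot)$) to cancel the leading boundary contribution. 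Once this cancellation is in place the remaining estimates are routine integrations of~\eqref{bessel} and the verification of~\eqref{eq:L2conser} up to $o(1)$, completing the proof.
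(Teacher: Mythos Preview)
Your proposal is correct and follows essentially the same two-sided min--max strategy as the paper: transplant limit eigenforms onto $M_\eps$ for the upper bound, and use the estimates of \sref{suite} to push eigenforms of $M_\eps$ into the domain of the limit quadratic form for the lower bound.

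The one place where you diverge from the paper is in your treatment of the ``main obstacle'' for the upper bound when $p\in\{n/2,n/2+1\}$ and $H^{n/2}(\Sigma)\neq 0$. You anticipate a delicate two-scale cut-off in $[\eps,2\eps]$ to handle the fact that $\mcL_\pm(\varphi)\in\ker A$. The paper instead observes that precisely \emph{because} $\mcL_\pm(\varphi)\in\ker A$, the singular part $\varphi_\pm$ of a limit eigenform has $U_\pm(\varphi_\pm)$ \emph{constant} in $t$ (the $A$-eigenvalue is $0$, so $t^{-A}$ acts trivially). Thus, after splitting $\varphi=\varphi_0+\varphi_++\varphi_-$ with $\varphi_0\in\dom(D_{\min})$, one can take $U_\pm(\varphi_\pm)=(0,\alpha_\pm)$ constant on $[0,1/2]$, and this matches the handle data $\alpha(0),\alpha(L)$ exactly via the coupling condition in~\eqref{eq:defn2}. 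The $\beta$-component on the handle is extended by the Neumann recipe already used for the interval spectrum. So the transplantation is in fact \emph{easier} in this borderline case than in the $p=(n+1)/2$ case, and your two-scale ansatz, while not wrong, is unnecessary. Your worry about ``logarithmic failure'' is also misplaced here: the eigenvalue $0$ of $A$ lies strictly inside $]{-}\tfrac12,\tfrac12[$ and produces no logarithmic term.

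On the lower bound side, your sketch is at the right level and matches the paper's route; the paper just makes the construction of $\psi_m$ explicit (separate cut-offs $\chi_m$ for the Dirichlet piece on $[0,L]$, and $\xi_0$, $\xi_m$ on the cones) and checks~\eqref{eq:est.fq} by integration by parts, but this is a matter of detail rather than approach.
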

\begin{proof}
  By duality it is sufficient to consider $p<n/2+1$. Let $\{\mu_N\}$,
  $N\geq 1$, be the sequence of the eigenvalues, counted with
  multiplicity, of the limit operator as described by the theorem in
  this degree.

  \subsection*{Upper bound} We show first that $\limsup_{\eps\to
    0}\lambda^p_N(\eps)\leq\mu_N$ by transplanting the corresponding
  eigenforms on $M_\eps$. The formula is then just a consequence of
  the minimax formula. Let us describe how the different type of
  eigenforms are transplanted.
  
  \subsubsection*{Eigenforms in $\dom(D_{\min})$ on $\overline M$.}
  These are the easiest because if $\phi\in \dom(D_{\min})$ then by
  definition, we find a sequence $\phi_l\in \Ccic{\Lambda^\bullet
    T^*\overline M\setminus S}$ such that
    \begin{equation*}
      \lim_{l\to\infty} \|\phi_l-\phi\|+\|D\phi_l-D\phi\|=0.
    \end{equation*}
    These $\phi_l$ are transplanted easily on the manifold
    $M_{\eps_l}$.
    
    \subsubsection* {Eigenforms of $ D_{\max}\cap
      \Lsqr{\Lambda^{(n+1)/2} T^*\overline M} $ on $\overline M$}
  Any such form $\phi$ can be written as
  \begin{equation*}
    \phi=\phi_0+\phi_1=\phi_0+\phi_++\phi_-
  \end{equation*}
  where $\phi_0\in \dom(D_{\min})$ and $\phi_+,\phi_-$ have support in
  $\mcC^\pm_0$ and
  \begin{equation*}
    U_\pm(\phi_\pm)=\sum_{\gamma} t^{-\gamma}c_{\pm,\gamma} \sigma_\gamma
  \end{equation*}
  where $c_{\pm,\gamma}\in \C$ and each $\sigma_{\gamma}\in \mcH_5$
  satisfies $A \sigma_{\gamma} =\gamma \sigma_{\gamma}$ for a
  $\gamma\in {]}{-}1/2,1/2{[}$ associated to $\mu_\gamma$ an exact
  $p$-eigenvalue of $\Delta_\Sigma$.  We only need to explain how we
  construct the transplantation $\phi_{1,\eps}$ of $\phi_1$ on
  $M_\eps.$
 
  On $M_\eps\setminus \mcA_\eps$ we let $\phi_{1,\eps}=\phi_{1}$ and
  on the handle $\mcA_\eps$, we define $\phi_{1,\eps}$ by
  \begin{equation*}
   U( \phi_{1,\eps}) = \sum_{\gamma} \eps^{-\gamma}
          \Big(c_{+,\gamma}\sigma_\gamma \chi_0(L-t)
               \e^{-\frac{\mu_\gamma}{\eps}(L-t)} 
              +Jc_{-,\gamma}\sigma_\gamma\chi_0(t)
               \e^{-\frac{\mu_\gamma}{\eps}(t)}\Big),
  \end{equation*}
  where $\chi_0$ is a cut-off function $\chi_0$ which satisfies
  $\chi_0(t)=1$ for $0<t<L/4$ and $\chi_0(t)=0$ for $L/3<t$.

  It is an easy calculation to show that $\int_{\mcA_\eps}
  \bigl(|D\phi_{1,\eps}|^2+|\phi_{1,\eps}|^2)\bigr)=O(\eps^{1-2\delta})$, 
  for a certain $\delta \in  {]}{-}1/2,1/2[$. 
  
\subsubsection*{Eigenforms of the interval with harmonic values in
    $\Sigma$}
  If we express the forms in terms of $\sigma$, as described at the
  beginning of \sref{sec:lap}, the Dirichlet spectrum of the interval
  corresponds to a form like $(0,f(t)\alpha)$ with $\alpha$ a $p$-form
  harmonic on $\Sigma$ and $f$ an eigenfunction for the Dirichlet
  Laplacian on the interval, it can be prolongated by 0.  The Neumann
  spectrum of the interval corresponds to a form like $(f(t)\beta,0)$
  with $\beta$ a $(p-1)$-form harmonic on $\Sigma$ and $f$ an
  eigenfunction for the Neumann Laplacian on the interval (or its dual
  by the Hodge-$\ast$ operator in the case $p=(n+1)/2$) it can be
  prolongated by 
  \begin{equation*}
    \sigma_+=(\eps^{n/2+1-p}\xi(t)f(L)t^{p-1-n/2}\beta,0)
  \end{equation*}
  where $\xi$ is a fixed cut-off function, $0\le \xi\le 1$, $\xi=1$ on
  $[0,1/4]$ and $\xi=0$ on $[3/4,1],$ and with the same type of
  expression on the other end.  The $q$-norm of the prolongation given
  here is of order $O(\sqrt\eps)$, except in the case $p=(n+1)/2$
  where we obtain $O(\sqrt{\eps|\log\eps|})$ (the calculus is the same
  as in~\cite[Eq.~(2.1)]{AC2}).

  \subsubsection*{Special case $H^{n/2}(\Sigma)\neq 0$}
  In this case, the eigenforms of degree $p=n/2$ belonging to the
  limit problem, can be transplanted as follows.  Let
  $(\varphi,\sigma)\in \dom(q)$; we know that, as before,
  $\varphi=\varphi_0+\varphi_1=\varphi_0+\varphi_+ +\varphi_-$ where
  $\varphi_0\in \dom(D_{\min})$ and $\varphi_+,\varphi_-$ have support
  in $\mcC^\pm_0$ and $U_\pm(\varphi_\pm)=(0, \alpha_\pm)$ where
  $\alpha \in \mcH^{n/2}(\Sigma)$ is constant on $[0,1/2]$ and
  $\sigma=(\beta,\alpha)\in H^1([0,L],
  \mcH^{n/2-1}(\Sigma)\oplus\mcH^{n/2}(\Sigma))$ satisfies
  $\alpha(0)=\alpha_-$ and $\alpha(L)=\alpha_+$.

  We extend $\beta$ as before for the Neumann spectrum of the
  interval, and because $U_\pm(\varphi_\pm)$ is constant on $[0,1/2]$
  it is easy to transplant $(\varphi_1,\sigma)$ on $M_\eps$ for
  $\eps\le1/2$.

  \subsubsection*{Conclusion}
  Now, for any rank $N$, let $\phi_1,\dots,\phi_N$ be an orthonormal
  basis of the total eigenspace $E_N$ of the limit problem,
  corresponding to the $N$ first eigenvalues. For any $\eps> 0$ we
  define a linear operator $T_\eps$ from $E_N$ to the domain of the
  quadratic form on $M_\eps$ by demanding that $T_\eps(\phi_j)$ is the
  transplanted form as described above.  The preceding estimates show
  that $\langle
  T_\eps(\phi),T_\eps(\psi)\rangle=\langle\phi,\psi\rangle+o(1)$ and
  also that $q(T_\eps(\phi),T_\eps(\psi))=q(\phi,\psi)+o(1)$.
  Evaluating the Rayleigh-Ritz quotient on the image $T_\eps(E_N)$
  gives then, with the minimax formula,
  \begin{equation*}
    \lambda^p_N(\eps)\leq\mu_N+o(1).
  \end{equation*}

  \subsection*{Lower bound}
  To show the other inequality, namely $\liminf_{\eps\to
    0}\lambda^p_N(\eps)\geq\mu_N$, we use the estimates provided in
  \sref{sec:asymp}. The eigenvalues inequality is then a consequence
  of the minimax principle applied to the limit of a subsequence for
  an orthonormal family of the $N$ first eigenforms of $M_\eps$.

  We give the argument first for one eigenvalue. For simplicity, we
  assume that $H^{n/2}(\Sigma)=0$.  The same
  proof, with a slight modification of the arguments, also works in
  the case when $H^{n/2}(\Sigma)$ is non-trivial.

  We consider a subsequence $\lambda_m=\lambda^p_N(\eps_m)$ such that
  \begin{equation*}
    \lim_{m \to \infty} \lambda_m 
    = \liminf_{\eps\to 0}\lambda^p_N(\eps)=\lambda
  \end{equation*}
  and denote the corresponding normalised eigenforms by $\phi_m$,
  namely, (in the following we write $\eps=\eps_m$)
  \begin{equation*}
    D^2 \phi_m = \lambda_m \phi_m 
    \qquad \text{and} \qquad 
    \|\phi_m\|_{\Lsymb[2]}=1.
  \end{equation*}
  For $\eps>0$ small enough, we will construct a form
  \begin{equation*}
    \psi_m \in \Lsqr{\Lambda^p T^*\overline M} \oplus 
               \Lsqr{[0,L], \mcH^{p-1}(\Sigma) \oplus \mcH^p(\Sigma)}
  \end{equation*}
  which is in the domain of the quadratic form of the associated limit
  operator. Here again, $\mcH^p(\Sigma)$ denotes the space of harmonic
  $p$-forms on $(\Sigma,h)$. Recall that we have denoted the spectrum
  of the limit operator by $\{\mu_N\}_N$. Moreover, the correspondence
  $\phi_m \mapsto \psi_m$ will be an almost isometry.  We begin to
  define $\psi_m$ (or more precisely $U\psi_m$ on $[0,L]$).  From
  \lrefs{lem:high.ev}{lem:low.ev} we conclude that on the handle
  \begin{equation*}
    \phi_m=h_m+k_m
  \end{equation*}  
  where $\lim_{m\to\infty}\|k_m\|_{\Lsymb[2]}=0$, $D^2
  h_m=\lambda_m h_m$  and $h_m$ is transversally harmonic.

  Moreover, by \lrefs{lem7}{lem8}, we can decompose
  $h_m=h^\Dir_{m}+h^\Neu_{m}$ where
  \begin{equation*}
    Uh^\Dir_{m} \Bigl(\frac{L\pm L}{2}\Bigr)
    =O\bigl(\sqrt{\eps|\log\eps|}\bigr)
  \end{equation*}
  and
  \begin{equation*}
    \frac{d}{dt} (U h^\Neu_{m}) \Bigl(\frac{L\pm L}{2}\Bigr)
    =O(\eps|\log\eps|).
  \end{equation*}
  Since $h_m$ satisfies the eigenvalue equation, we conclude that
  $u^\Dir_{m}=Uh^\Dir_{m}$ and $u^\Neu_{m}=Uh^\Neu_{m}$ both satisfy the
  equation
  \begin{equation*}
    -u''=\lambda_m u,
  \end{equation*}
  hence there is a constant (independent of $m$) such that
  \begin{equation*}
     |u^\Dir_{m}(t)| + \left| {u^\Dir_{m}}'(t)\right| 
     \le C \|u^\Dir_{m}\|_{\Lsymb[2]}
       \quad \text{and} \quad
     |u^\Neu_{m}(t)| + \left| {u^\Neu_{m}}'(t)\right|
     \le C \|u^\Neu_{m}\|_{\Lsymb[2]}
  \end{equation*}
  for all $t \in [0,L]$.  We will modify $u^\Dir_{m}$ in order to satisfy
  the Dirichlet boundary condition: for $\eta=\sqrt{\eps|\log\eps|}$
  we define
  \begin{equation*}
    \chi_m(t)=
    \begin{cases}
       t/\eta     & \text{if $t \in [0,\eta]$,}\\
       1          & \text{if $t \in [\eta,L-\eta]$,}\\
       (L-t)/\eta & \text{if $t \in [L-\eta,L]$}\\
    \end{cases}
  \end{equation*}
  and we define $\psi_m$ via
  \begin{equation*}
    U \psi_m = \chi_m u^\Dir_{m}+u^\Neu_{m}.
  \end{equation*}
  We have
  \begin{equation*}
    \|\phi_m -\psi_m\|_{\Lsymb[2]}^2 
    = \|k_m\|^2_{\Lsymb[2]} + \|(1-\chi_m)u^\Dir_{m}\|^2_{\Lsymb[2]}
  \end{equation*}
  hence
  \begin{equation*}
    \lim_{m\to\infty} \|\phi_m-\psi_m\|_{\Lsymb[2]}=0.
  \end{equation*}
  Moreover
  \begin{equation*}
    \int_0^L |(U\psi_m)'(t)|^2dt 
    =   \int_0^L |(\chi_m u^\Dir_{m})'(t)|^2 dt
      + \int_0^L |{u^\Neu_{m}}'(t)|^2 dt.
  \end{equation*}
  But
  \begin{align*}
    \int_0^L |(\chi_m u^\Dir_{m})'(t)|^2 dt
    & = \int_0^L |\chi_m'(t)|^2|u^\Dir_{m}(t)|^2dt 
      + \int_0^L \Bigiprod {\frac{d}{dt} \bigl(\chi_m^2u^\Dir_{m}\bigr)} 
                           {\frac{d}{dt}u^\Dir_{m}} dt\\
    &= \int_0^L |\chi_m'(t)|^2|u^\Dir_{m}(t)|^2dt
     + \lambda_m \int_0^L |\chi_m u^\Dir_{m}(t)|^2 dt
  \end{align*}
  and
  \begin{equation*}
    \int_0^L |\chi_m'(t)|^2|u^\Dir_{m}(t)|^2dt
    = O(\sqrt{\eps|\log\eps|}).
  \end{equation*}
  Similarly, we have
  \begin{align*}
    \int_0^L |{u^\Neu_{m}}'(t)|^2dt
    &= \lambda_m \int_0^L |u^\Neu_{m}(t)|^2dt 
      +\bigl[u^\Neu_{m}{u^\Neu_{m}}'\bigr]_0^L\\
    &= \lambda_m \int_0^L |u^\Neu_{m}(t)|^2dt+O(\eps|\log\eps|).
  \end{align*}

 Now on $\overline M\setminus (\mcC^+_0\cup \mcC^-_0)$ we set
  \begin{equation*}
    \psi_m=\phi_m
  \end{equation*}
 and on $\mcC^\pm_\eps$, we know that
  \begin{equation*}
    \phi_m=U^\ast(u_{\pm,m}+v_{\pm,m})+\phi_{\pm,m}^\Lambda
  \end{equation*}
  where $u_{\pm,m}$ is described in \lref{lem5} and the corresponding
  assertion on the harmonics parts in \cref{cor:hp}.  In particular,
  $u_{\pm,m}$ has a well defined extension $\overline u_{\pm,m}$ which
  is in the domain of the limit operator. Moreover, we know that
  \begin{equation*}
    \lim_{m\to\infty} \|v_{\pm,m}\|_{\Lsqr{\mcC^\pm_\eps}}=0
  \end{equation*}
  and for a certain constant $C$ we have
  \begin{equation*}
    \|\phi^\Lambda_{\pm,m}\|^2_{\Lsqr{\mcC^\pm_\eps\setminus \mcC^\pm_\eta}}
    \le C \eta^2
  \end{equation*}
  for each $\eta>\eps$.  Moreover
  \begin{equation*}
    UD^2U^\ast u_{\pm,m}=\lambda_m u_{\pm,m}, \quad
    UD^2U^\ast v_{\pm,m}=\lambda_m v_{\pm,m} \quad \text{and} \quad
    D^2 \phi_{\pm,m}^\Lambda=\lambda_m \phi_{\pm,m}^\Lambda.
  \end{equation*}
  We consider two cut off functions
  \begin{equation*}
    \xi_0(t)=
    \begin{cases}
          1    & \text{if $t\ge 1/2$,}\\
          4t-1 & \text{if $t\in [1/4,1/2]$,}\\
          0    & \text{if $t\le 1/4$}
    \end{cases}
  \end{equation*}
  and, with $\eps=\eps_m,$
  \begin{equation*}
    \xi_m(t)=
    \begin{cases}
          1   & \text{if $t \ge 2\sqrt{\eps}$,}\\
          \dfrac{\log(2\eps)-\log(t)}{\log(\sqrt{\eps})} 
              & \text{if $t \in [2\eps,2\sqrt{\eps}]$,}\\
          0   & \text{if $t \le 2\eps$.}
    \end{cases}
  \end{equation*}
  On $\mcC^\pm_0$, we define
  \begin{equation*}
    \psi_m = U^\ast(\overline u_{\pm,m} 
      + \xi_0 v_{\pm,m}) + \xi_m\phi_{\pm,m}^\Lambda.
  \end{equation*}
  There exists a $\delta>0$ such that
  \begin{equation*}
    \|\psi_m\|_{\Lsqr{\mcC^\pm_0\setminus\mcC^\pm_\eps }}
    = O(\eps^\delta).
  \end{equation*}
  Moreover
  \begin{equation*}
    \| \psi_m-\phi_m\|_{\Lsqr{\overline M}}
    \le O(\eps^\delta) 
      + \sum_{s=\pm} 
         \Bigl[ \|v_{s,m}\|_{\Lsqr{\mcC^s_\eps}}
              + \|\phi^\Lambda_{s,m}
                   \|_{\Lsqr{\mcC^s_\eps\setminus\mcC^s_{2\sqrt{\eps}}}}
         \Bigr].
  \end{equation*}
  Hence
  \begin{equation*}
    \lim_{m\to\infty}\| \psi_m-\phi_m\|_{\Lsqr{\overline M}}=0
  \end{equation*}
  and the correspondence $\phi_m \mapsto \psi_m$ is almost isometric.

  We now deal with the quadratic form expression. Namely, we want to show that
  \begin{equation}
    \label{eq:est.fq}
    \|(d+d^*)\psi_m\|^2_{\Lsqr{\overline M}}
    \le \lambda_m \|\psi_m\|^2_{\Lsqr{\overline M}}+o(1).
  \end{equation}
  After an integration by part,we get
  \begin{equation}
    \label{barM}
    \|(d+d^*)\psi_m\|^2_{\Lsqr{\overline M\setminus(\mcC^+_0\cup \mcC^-_0) }}
     = \lambda_m \| \psi_m 
         \|^2_{\Lsqr{\overline M\setminus(\mcC^0_0\cup \mcC^-_0)}}
       +BT
   \end{equation}
   where $BT$ is a certain boundary integral over the regular part of
   $\partial\mcC^+_0\cup\partial \mcC^-_0$.  Indeed the behaviour of
   $\overline u_{s,m}$ implies that
   \begin{equation}
     \label{baru}
     \|(d+d^*) \overline u_{\pm,m} \|^2_{\Lsqr{\mcC^\pm_0}}
     = \lambda_m \|\overline  u_{\pm,m} \|^2_{\Lsqr{\mcC^\pm_0}}+BT_{\pm,u}
   \end{equation}
   where $BT_{\pm,u}$ is a certain boundary integral over
   $\partial\mcC^\pm_0$.  Similarly, we have
  \begin{align}
    \nonumber
    \|(d+d^*) (\xi_0v_{\pm,m})\|^2_{\Lsqr{\mcC^\pm_0}}&
    = \int_{\mcC^\pm_0} |d\xi_0|^2|v_{\pm,m}|^2 d\vol
    + \bigiprod{(d+d^*)(\xi_0^2v_{\pm,m})} {(d+d^*)v_{\pm,m}}\\ \nonumber
    &= \int_{\mcC^\pm_0} |d\xi_0|^2|v_{\pm,m}|^2 d\vol
    + \lambda_m\int_{\mcC^\pm_0}|\xi_0v_{\pm,m}|^2d\vol+BT_{\pm,v}\\
     \label{v}
    &= \lambda_m\int_{\mcC^\pm_0}|\xi_0v_{\pm,m}|^2d\vol
            +o(1)+BT_{\pm,v}
  \end{align}
   where again $BT_{\pm,v}$ is a certain boundary integral over
  $\partial\mcC^\pm_0$.  Similarly, we get
  \begin{multline}
    \label{Lambda}
        \|(d+d^*) (\xi_m\phi^\Lambda_{\pm,m})\|^2_{\Lsqr{\mcC^\pm_0}}\\
    = \int_{\mcC^\pm_0} |d\xi_m|^2|\phi^\Lambda_{\pm,m}|^2 d\vol
    + \bigiprod{(d+d^*)(\xi_m^2\phi^\Lambda_{\pm,m})} 
                      {(d+d^*)\phi^\Lambda_{\pm,m}}\\
    = \int_{\mcC^\pm_0} |d\xi_m|^2|\phi^\Lambda_{\pm,m}|^2 d\vol
    + \lambda_m\int_{\mcC^\pm_0}|\xi_m \phi^\Lambda_{\pm,m}|^2 d\vol
    + BT_{\pm,\Lambda}
  \end{multline}
  where $BT_{\pm,\Lambda}$ is a certain boundary integral over
  $\partial\mcC^\pm_0$.  Furthermore, we set
  $M(r)=\|\phi^\Lambda_{\pm,m}\|^2_{\Lsqr{\mcC^\pm_\eps\setminus\mcC^\pm_r}}$.
  By \lref{lem:high.ev}, $M(r)$ is of order
  $O(\frac{r^2}{\Lambda^2})$, and we have
  \begin{align*}
    \int_{\mcC^\pm_0} |d\xi_m|^2|\phi^\Lambda_{\pm,m}|^2 d\vol
    &= \frac{4}{|\log\eps|^2} 
            \int_{2\eps}^{2\sqrt{\eps}} \frac{1}{r^2} \, dM(r)\\
    &= \frac{4}{|\log\eps|^2}
       \bigg[ 
                \frac{M(2\sqrt{\eps})}{4\eps}
              - \frac{M(2\eps)}{4 \eps^2}
              + 2 \int_{2\eps}^{2\sqrt{\eps}} \frac{M(r)}{r^3} dr
       \bigg]\\
    &=O\bigg(\frac{1}{|\log\eps|}\bigg).
  \end{align*}
  We also have
  \begin{equation*}
    BT+BT_{+,u}+BT_{-,u}+BT_{+,v}+BT_{-,v}+BT_{+,\Lambda}+BT_{-,\Lambda}=0
  \end{equation*}
  and the square of the $\Lsymb[2]$-norm of $(d+d^*)\psi_m$ on
  $\overline M$ is the sum of~\eqref{barM}--\eqref{Lambda}. Hence we
  obtain~\eqref{eq:est.fq}.

  The argument for the first $N$ eigenvalues is as follows: Let
  $\phi^k_m:=\phi^k_m(\eps_m)$, $k=1,\dots, N$, be an orthonormal family of
  eigenforms for the eigenvalues $\lambda_k(\eps_m)$ (we drop here the
  index $p$) such that
  $\lambda_1(\eps_m)\leq\dots\leq\lambda_N(\eps_m)$ and
  $\lim_{m\to\infty}\lambda_N(\eps_m)=\liminf_{\eps\to
    0}\lambda_N(\eps)$ for $\lim_{m\to\infty}\eps_m=0$. We have just
  seen that to each $\phi^k_m$ we have associated a $\psi_m^k$  in the domain of the 
  limit quadratic form. 
    Then the fact that the map $\phi_m\to\psi_m$ is almost an isometry,
     shows that
  \begin{equation*}
    | \iprod {\psi^k_m(\eps_m)} {\psi^l_m(\eps_m)} 
           -\delta(k,l)| =o(1)
  \end{equation*}
  as $m$ tends to infinity  for all $k, l$, where $\delta(k,l)$
  denotes the Kronecker symbol.
  
  Now if we calculate the Rayleigh-Ritz quotient for an element $\psi$
  of the vector space with base $\{\psi^k_m,\, k=1,\dots, n\}$, it
  follows from the two preceding estimates and~\eqref{eq:est.fq}
  applied for each $\psi^k_m$ that
  \begin{equation*}
    \|(d+d^\ast)\psi\|^2\leq (\lambda_N(\eps_m)+o(1))\|\psi\|^2.
  \end{equation*}
  The conclusion follows then from the minimax formula, namely
  $\mu_N\leq\lambda_N(\eps_m)+o(1)$ for all $m\in\N$ and at the limit:
  $\mu_N\leq\liminf_{\eps\to 0}\lambda_N(\eps)$.
\end{proof}

\section{Covering manifolds}
\label{sec:covering}
In this section we explain how the convergence argument of the
preceding section can be used also for a covering manifold in order to
show the existence of spectral gaps. Let us first describe the
covering manifold and the \emph{Floquet decomposition} of a periodic
operator on the covering.

Let $X$ be an $(n+1)$-dimensional non-trivial covering manifold, with
quotient $M$ and covering group $\Z$.  This covering defines a
non-trivial element $c\in H^1(M,\Z).$ To each element of $H^1(M,\Z)$
corresponds a homotopy class of functions $f_c:M\to \Sphere^1$ and if
$c\neq 0$ then $f_c$ is surjective. It can be chosen smooth, so we
know, by the Sard's theorem, that $f_c$ has a regular value $y$.
Therefore, $\Sigma=f_c^{-1}(y)$ is a hypersurface of $M$ such that
$F:=M \setminus \Sigma$ is a fundamental domain for $X$.  Let
$\{g_\eps\}_\eps$ be the family of metrics on $M$ constructed in
\sref{sec:geo}. We denote the lift of $g_\eps$ onto $X$ also by
$g_\eps$.

Let $\chi \in \hat \Z$ be a character of the group $\Z$, i.e., a group
homomorphism $\map \chi \Z \Sphere^1$. Clearly, such a homomorphism is
given by $\chi(\gamma)=\e^{\im \gamma \theta}$ for some $\theta \in
[0,2\pi]$. We will identify $\chi$ and $\theta$ in the sequel.

We can associate a complex line bundle $E_\theta^0 \to M$ to the
$\Z$-covering $X \to M$ since the covering $X \to M$ is a principal
bundle with discrete fibre $\Z$. Similarly, we denote by $E_\theta^p
\to M$ the bundle associated to the $\Z$-covering $\Lambda^p T^*X \to
\Lambda^p T^*M$. A smooth section $\omega$ in $E^p_\theta$ can be
considered as a smooth section in $\Lambda^p T^*X$ satisfying the
so-called \emph{equivariance condition}
\begin{equation}
  \label{eq:eq.var}
  \omega(\gamma + x) = \e^{\im \gamma \theta} \omega(x)
\end{equation}
for $x \in X$ and $\gamma \in \Z$ where we write the action of $\Z$ on
$X$ additively. Clearly, such a section is determined on a fundamental
domain $F \subset X$. The $\Lsymb[2]$-space of $\theta$-equivariant
sections with respect to the metric $g$ will be denoted by
$\Lsqr{E^p_\theta,g_\eps}$. Since $\Lsymb[2]$-functions do not
``feel'' the condition~\eqref{eq:eq.var} on a fundamental domain,
$\Lsqr{E^p_\theta,g_\eps}$ is unitarily equivalent to $\Lsqr{\Lambda^p
  T^*F,g_\eps}$, independently of $\theta$.

Using Floquet theory (see e.g.~\cite[XIII.16]{reed-simon-4}), the
$\Lsymb[2]$-space of forms on $(X,g_\eps)$ can be transformed into
\begin{equation}
  \label{eq:dir.int.hs}
  \Lsqr{\Lambda^p T^*X, g_\eps} 
  \cong \int_{\hat \Z} \Lsqr{E^p_\theta, g_\eps}\, d\theta.
\end{equation}
The Gau\ss -Bonnet operator $D$ acting on $(X,g_\eps)$ can be decomposed
under this direct integral representation as
\begin{equation}
  \label{eq:dir.int.op}
  D \cong \int_{\hat \Z} D_\theta \, d\theta
\end{equation}
where the domain of $D_\theta$ consists of those forms $\omega$ having
a $\theta$-equivariant continuation in $\Sobloc X$.  For our purposes,
it will be convenient to use the fundamental domain corresponding to
$F= M_\eps \setminus \{2\} \times \Sigma$, i.e., we cut along the
right end of the collar neighbourhood $\mcU={]}{-}2,2{[} \times
\Sigma$.  The domain of $D$ is then given by forms $\omega$, such that
their components are piecewise in $\Sobsymb[1]$ and satisfy the
boundary conditions
\begin{equation}
  \label{eq:recolfq.th}
  \omega_-= \e^{\im \theta} \omega_+
\end{equation}
where $\omega_-$ denotes the limit of $\omega$ on $\{2\}
\times \Sigma \subset \clo \mcU$ and $\omega_+$ the limit from the
opposite side $M \setminus \mcU$.

The Hodge-de Rham operator $\Delta^p_\eps=D^2$ acting on $p$-forms on
$(X,g_\eps)$ decomposes similarly, where the domain of
$\Delta^p_{\eps,\theta}=D_\theta^2$ consists of those forms $\omega$
such that their components are piecewise in $\Sobsymb[2]$ and satisfy
additionally to~\eqref{eq:recolfq.th} the first order boundary
conditions
\begin{equation}
  \label{eq:recolop.th}
  \omega_-' = - \e^{\im \theta} \omega_+',
\end{equation}
where $\omega_-'$ denotes the outward normal derivative of $\omega$
on $\{2\} \times \Sigma \subset \clo \mcU$ and similarly, $\omega_+'$
the outward normal derivative from the opposite side.

The spectrum of the Hodge-de Rham operator $\Delta^p_{\eps,\theta}$ is
purely discrete and will be denoted by $\lambda_{k,\theta}^p(\eps)$,
ordered in increasing order and repeated according to the
multiplicity.  From the direct integral representation (and the
continuous dependence on $\theta$) it follows that the spectrum of the
Hodge-de Rham operator $\Delta_\eps^p$ on $X$ is given as
\begin{equation}
  \label{eq:spec}
  \spec {\Delta_\eps^p} = \bigcup_{k \in \N} B_k^p(\eps)
  \qquad \text{where} \qquad
  B_k^p(\eps) 
  = \set {\lambda_{k,\theta}^p(\eps)}{\theta \in [0, 2\pi]}
\end{equation}
are compact intervals, called \emph{bands}.

Our convergence result \tref{thmC} holds also for the
$\theta$-equivariant eigenvalues $\lambda_{k,\theta}^p(\eps)$.
Although we have shown this convergence only for $\theta=0$, all
arguments remain the same noting that the arguments are local in
$\mcV$ or rely on elliptic regularity elsewhere. Let $\Lambda>0$, then
by continuity of the map $\theta\mapsto \lambda_{k,\theta}^p(\eps)$,
we know that there is some $\theta^\pm_\eps$ such that
\begin{equation*}
   B_k^p(\eps)\cap[0,\Lambda] = 
      [\lambda_{k,\theta^-_\eps}^p(\eps),
                    \lambda_{k,\theta^+_\eps}^p(\eps)]
\end{equation*}
provided $\lambda^p_k(0)< \Lambda$ and $\eps>0$ small enough.
Applying the preceding convergence result to
$\lambda_{k,\theta^-_\eps}^p(\eps)$ and
$\lambda_{k,\theta^-_\eps}^p(\eps)$, we obtain that
\begin{equation*}
  \lim_{\eps\to 0} \lambda_{k,\theta^\pm_\eps}^p(\eps)=\lambda_k^p(0)
\end{equation*}
where $\lambda_k^p(0)$ denotes the spectrum of the limit operator on
$p$-forms.
 
Hence the limit does no longer depend on the Floquet parameter $\theta$. This
means, that the bands $B_k^p(\eps)$ shrink to a point
$\{\lambda_k^p(0)\}$, where $\lambda_k^p(0)$ denotes the spectrum of
the limit operator.

We therefore have shown our main result (remind that $n+1$ is the
dimension of $X$):
\begin{thm}
  \label{thm:gaps}
  Assume that $n$ is odd or $H^{n/2}(\Sigma)$ is trivial.  Given $N
  \in \N$ there is a metric $g=g_N$ such that the Hodge-de Rham
  operator on the $\Z$-covering $(X,g_N)$ has at least $N$ gaps in its
  (essential) spectrum.

  If $n$ is even and $H^{n/2}(\Sigma)\ne 0$ then the result remains
  true for the Hodge-de Rham operator acting on $p$-forms providing that 
  $p\ne n/2$ and $p \ne n/2+1$.
\end{thm}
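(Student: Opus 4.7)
The plan is to reduce the existence of gaps on $X$ to the convergence result already established on the base $M$, by combining the Floquet decomposition with the observation that the limit spectrum is independent of the Floquet parameter. The input is exactly \tref{thmC} (extended by \tref{sansgap} if one wishes to include the cases $p\in\{n/2,n/2+1\}$ under $H^{n/2}(\Sigma)=0$); the new ingredient needed for the covering is a uniform (in $\theta$) control of the eigenvalues $\lambda^p_{k,\theta}(\eps)$.

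First I would record the band structure \eqref{eq:spec}: the spectrum of $\Delta^p_\eps$ on $(X,g_\eps)$ is the union of the compact bands
\begin{equation*}
  B^p_k(\eps)=\set{\lambda^p_{k,\theta}(\eps)}{\theta\in[0,2\pi]},
\end{equation*}
where $\lambda^p_{k,\theta}(\eps)$ is the $k$-th eigenvalue of the operator $\Delta^p_{\eps,\theta}$ defined on the fundamental domain $F=M_\eps\setminus(\{2\}\times\Sigma)$ with the transmission conditions \eqref{eq:recolfq.th}--\eqref{eq:recolop.th}. Next I would argue that, for each fixed $\theta$, the proof of \tref{thmC} applies verbatim and yields the same limit as for $\theta=0$. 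Indeed, the $\theta$-dependence lives only in the transmission condition at $\{2\}\times\Sigma$, which is inside $M\setminus\mcV$ and hence far from the shrinking collar $\mcM_\eps$ where all the delicate asymptotic analysis of \sref{sec:asymp} takes place. The lemmas of \sref{sec:asymp} (\lref{lem:high.ev}, \lref{lem:low.ev}, \lref{lem5}, \lref{lem7}, \lref{lem8}, \cref{cor:hp}) only use the local form of the metric on $\mcM_\eps$, together with the boundedness of the $\Sobsymb[1]$-norm away from $\mcV$, which is provided by elliptic regularity regardless of $\theta$. For the upper bound in \tref{sansgap}, trial sections of degree $\theta$ are obtained from the $\theta=0$ trial sections by choosing a representative on the fundamental domain $F$ and extending $\theta$-equivariantly; the local computations on $\mcM_\eps$ are unaffected. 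Therefore, for each $k$ and each $\theta\in[0,2\pi]$,
\begin{equation*}
  \lim_{\eps\to 0}\lambda^p_{k,\theta}(\eps)=\mu^p_k,
\end{equation*}
where $\mu^p_k$ is the $k$-th eigenvalue of the limit operator of \tref{thmC}, and crucially $\mu^p_k$ does \emph{not} depend on $\theta$.

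I would then exploit the continuity of $\theta\mapsto\lambda^p_{k,\theta}(\eps)$ on the compact $\hat\Z=[0,2\pi]$: the extrema are attained at some $\theta^\pm_\eps$, and by the previous step both $\lambda^p_{k,\theta^-_\eps}(\eps)$ and $\lambda^p_{k,\theta^+_\eps}(\eps)$ converge to the same value $\mu^p_k$. Hence the band $B^p_k(\eps)$ shrinks to the single point $\{\mu^p_k\}$ as $\eps\to 0$. Under the hypothesis of the theorem, the limit spectrum $\{\mu^p_k\}_{k\in\N}$ is the discrete spectrum of a well-defined selfadjoint operator (the Friedrichs extension of $D_{\max}\circ D_{\min}$ on $\overline M$ plus the decoupled Dirichlet/Neumann spectra of the interval with harmonic multiplicities), so its only accumulation point is $+\infty$. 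Given $N\in\N$, I pick $N+1$ consecutive distinct values $\mu^p_{k_0}<\mu^p_{k_1}<\dots<\mu^p_{k_N}$ and disjoint open intervals $I_j\ni\mu^p_{k_j}$. For $\eps$ small enough every band meeting $[0,\mu^p_{k_N}+1]$ lies in some $I_j$, so the spectrum in that range is contained in $\bigcup_{j=0}^N I_j$, which leaves at least $N$ gaps. Setting $g_N:=g_{\eps_N}$ for such an $\eps_N$ proves the theorem.

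The only nontrivial point is the transfer of the convergence to arbitrary $\theta$, and this is the main (mild) obstacle: one has to check that every estimate in \sref{sec:asymp} and \sref{sec:limit} is either local inside $\mcM_\eps$ (where the equivariance condition plays no role) or follows from $\Sobsymb[1]_{\mathrm{loc}}$-compactness and elliptic regularity on the complement (which is insensitive to $\theta$). Once this bookkeeping is carried out, the passage from base convergence to covering gaps is automatic via Floquet theory, and no further case distinction is required beyond the topological hypotheses already built into \tref{thmC}.
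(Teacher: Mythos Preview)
Your proposal is correct and follows essentially the same approach as the paper: Floquet decomposition, observation that the analysis of \sref{sec:asymp}--\sref{sec:limit} is local in $\mcM_\eps$ (hence insensitive to the transmission condition at $\{2\}\times\Sigma$) and otherwise relies only on elliptic regularity, so that the limit spectrum is $\theta$-independent and the bands $B^p_k(\eps)$ shrink to points. The paper's argument is slightly terser than yours but identical in substance; your remark that the convergence must be applied along the moving parameters $\theta^\pm_\eps$ (hence needs the estimates to be uniform in $\theta$, not just pointwise) is exactly the right caveat, and is implicit in the paper's formulation as well.
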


  \subsection*{Proof of \tref{thmD}.}
  Let us now have a look at the Dirac operator on a spin manifold $M$.
  It is a consequence of~\cite{Roe} that the spectrum of the Dirac
  operator on the periodic manifold is the whole real line if
  $\alpha_n(\Sigma) \neq 0$. For the other implication, the same
  calculations as before, but with simpler expressions. Let us sketch
  the ideas here. If $\alpha_n(\Sigma)=0$ then, by the result
  of~\cite{ADH}, there exists a metric $h$ on $\Sigma$ such that the
  corresponding Dirac operator has no harmonic spinor.  We endow $M$
  with a metric such that its restriction to $\Sigma$ coincides with
  $h$. Let $\Lambda>0$ be such that the spectrum of the Dirac operator
  $D_0$ on $\Sigma$ does not intersect the interval
  $[-\Lambda,\Lambda]$.  By a scale of the metric $h$ we can always
  suppose that $\Lambda$ is large enough such that the Dirac operator
  $D$ is essentially self adjoint on the limit manifold $\overline M$
  (see \sref{sec:limit}).

  The precise behaviour of the Dirac operator on cones can be found
  in~\cite{Ch}. If $(\mcM_\eps,g_\eps)$ is isometric to $I_\eps \times
  \Sigma$ endowed with the warped product metric
  $d\tau^2+f_\eps(\tau)^2 h$ where $I_\eps = {]}{-}(L/2+1-\eps),
  L/2+1-\eps{[}$, then the Dirac operator on $\mcM_\eps$ is unitarily
  equivalent to
  \begin{equation*}
    \begin{pmatrix}
      0&1\\
     -1&0
    \end{pmatrix} \cdot
    \Big(\partial_\tau+\frac{1}{f_\eps(\tau)}
    \begin{pmatrix}
      0 &-D_0\\
      -D_0 & 0 
    \end{pmatrix}\Big)
  \end{equation*}
  on $\mcM_\eps$ using the isometry $\map U {\Lsqr
    {\mcM_\eps,g_\eps}}{\Lsqr{I_\eps,\Lsqr{\Sigma,h}}}$ as in
  \sref{sec:lap}.  Here, $f_\eps$ can be chosen either continuous and
  piecewise smooth as before, or smooth on the whole interval by the
  argument described in \sref{sec:geo}.  Anyway, we can redo the
  previous calculus with $A=A_0$, and there is no more boundary term
  in the expression of the quadratic form~\eqref{locform}
  or~\eqref{eq:est1}.

  For $\eps_m \to 0$, let $\phi_m$ be a family of eigenspinors on
  $M_{\eps_m}$ corresponding to the eigenvalues $\lambda_{\eps_m} \to
  \lambda$. Due to our choice of $h$ and $\Lambda$, the
  decomposition~\eqref{decomp} of the eigenspinor $\phi_m$ on
  $\mcM_{\eps_m}$ is reduced to the last term, and \lref{lem:high.ev}
  applies directly to $\phi_m$: There exists a constant $C>0$ such that
  \begin{equation*}
    \|\phi_m\|_{\Lsqr {\mcA_\eps}}^2
    \le C \frac{\eps^2}{\Lambda^2}
    \qquad \text{and} \qquad
    \|\phi_m\|^2_{\Lsqr {\mcC^\pm_\eps\setminus \mcC^\pm_{\eta}}}
    \le C\frac{\eta^2}{\Lambda^2}
  \end{equation*}
  as soon as $\eps_m\leq \eta$.  Thus, the $\Lsymb_2$-norm of the
  eigenspinors on the handle converges to $0$. Moreover, the limit
  spectrum will consist only on the spectrum of the Dirac operator
  with \emph{minimal} domain $D_{\min}$ on $\overline M$. The proof of
  \tref{thmC} can now be followed verbatim: for the `upper bound', the
  proof is reduced to the easiest part, namely eigenspinors in
  $\dom(D_{\min})$, and for the `lower bound' we use the cut-off
  function $\xi_m(t)$ on the cones defined there.

  The limit spectrum is the same for the operator involving the
  Floquet parameter.  Finally, the result of \tref{thmD} follows.

\section{Harmonic forms and small eigenvalues}
\label{sec:harm.forms}
Returning to the situation of \sref{sec:limit}, we can ask for the
multiplicity of the zero eigenvalue, which is given by the cohomology.
The calculation made there shows that ``small eigenvalues'' can occur,
{\it i.e.} $\lambda_\eps\neq 0$ such that $\lim_{\eps\to 0}
\lambda_\eps=0$.

We suppose here that if $n$ is even then $H^{n/2}(\Sigma)=0$ and if
$n$ is odd that $\Delta_\Sigma$ has no eigenvalue in $]0,1[$ then the
only limit operator involved is $D_{\max}\circ D_{\min}$, and we know
by the works of Cheeger that the kernel of $D_{\max}\circ D_{\min}$
coincides with the intersection cohomology of the manifold with
conical singularities.  Let $N$ be the number of small, or null
eigenvalues.  By the precedent result we know that
\begin{align*}
   N &= \dim I\!H^p(\overline M)+\dim H^{p-1}(\Sigma) &&
       \text{for $p<(n+1)/2$,}\\
   N &= \dim I\!H^p(\overline M)+\dim H^{p}(\Sigma)   &&
       \text{for $p>(n+1)/2$,}\\
   N &= \dim I\!H^p(\overline M)+\dim H^{p-1}(\Sigma)+\dim H^{p}(\Sigma)&&
       \text{for $p=(n+1)/2$.}
\end{align*}
 
The manifold $M_\eps$ is covered by the two open sets $U_0=M \setminus
({]}{-}1,1{[}\times \Sigma)$ and the collar $U={]}{-}2,2{[}\times
\Sigma$. The Mayer-Vietoris argument gives then a long exact sequence
\begin{equation*}
  \dots \to H_{\mathrm c}^q(U)\stackrel{j}{\to} 
     H^q(M_\eps)\stackrel{r}{\to}H^q(U_0)\to H_{\mathrm c}^{q+1}(U)
  \to \dots
\end{equation*}
But since $U$ is a cylinder, $H_{\mathrm c}^q(U)=H^{q-1}(\Sigma)$ for
all $q$. On the other hand $H^q(U_0)=I\!H^q(\overline M)$ for $q\leq
n/2$, the long exact sequence gives then that
\begin{equation*}
  \dim H^q(M_\eps)\leq\dim I\!H^q(\overline M)+\dim H^{q-1}(\Sigma)
\end{equation*}
and the equality is obtained if and only if $r$ in surjective and $j$ is injective.

\emph{So, for $p<(n+1)/2$ there are small eigenvalues as
  soon as $j$ is not injective or $r$ is not surjective.  In
  particular for $p=0$ the three spaces $I\!H^0(\overline
  M),\,H^0(U_0)$ and $H^0(M_\eps)$ are isomorphic to $\R$ and there is
  no small eigenvalue.}

For $p>(n+1)/2$ we use that $I\!H^q(\overline M)=H_{\mathrm c}^q(U_0)$
for $q\geq 1+n/2$ so we look at the long exact sequence
\begin{equation*}
  \dots \to H_{\mathrm c}^q(U_0)
  \stackrel{j}{\to} H^q(M_\eps)
  \stackrel{r}{\to}H^q(U)
  \to H_{\mathrm c}^{q+1}(U_0)
  \to \dots
\end{equation*}
and use the identity $H^q(U)=H^q(\Sigma)$.

For $p=(n+1)/2$ we have to look at the more complicate diagram
\begin{alignat*}{2}
  \dots \to H^{\frac{n-1}{2}}(\Sigma)\stackrel{\delta}{\to} & 
   H_{\mathrm c}^{\frac{n+1}{2}}(U_0) \stackrel{j}{\to} 
        H^{\frac{n+1}{2}}(M_\eps) \stackrel{r}{\to} & 
   H^{\frac{n+1}{2}}(\Sigma)&\to H_{\mathrm c}^{\frac{n+3}{2}}(U_0) 
      \to\dots\\ 
   \Big\downarrow\! \iota \hspace{.5cm} 
          \circlearrowleft &\hspace{.5cm}\shortparallel & &\hspace{1cm}
   \shortparallel\\
   \dots \to H^{\frac{n-1}{2}}(\Sigma_-\cup\Sigma_+)
             \stackrel{\overline\delta}{\to} & 
   H_{\mathrm c}^{\frac{n+1}{2}}(U_0) \twoheadrightarrow 
          I\!H^{\frac{n+1}{2}}(\overline M)\to &  
   0\hspace{.5cm}&\to H_{\mathrm c}^{\frac{n+3}{2}}(U_0) \to \dots
\end{alignat*}
Here $\iota(\omega)=(\omega,\omega)\in
H^{\frac{n-1}{2}}(\Sigma_-\cup\Sigma_+)=
\bigl(H^{\frac{n-1}{2}}(\Sigma)\bigr)^2$.  The long exact sequence gives
then
\begin{align*}
  \dim H^{\frac{n+1}{2}}(M_\eps) & 
  \leq \dim H^{\frac{n+1}{2}}(\Sigma) +
       \dim H_{\mathrm c}^{\frac{n+1}{2}}(U_0)-\dim \text{Rg} (\delta)\\&
  \leq \dim H^{\frac{n+1}{2}}(\Sigma) +
       \dim I\!H^{\frac{n+1}{2}}(\overline M) +
     \dim \text{Rg} (\overline\delta)-\dim \text{Rg} (\delta).
\end{align*}
But $\dim \text{Rg} (\overline\delta)-\dim \text{Rg} (\delta)\leq \dim
H^{\frac{n-1}{2}}(\Sigma) $ and the equality
\begin{equation*}
  \dim H^{\frac{n+1}{2}}(M_\eps)
  = \dim H^{\frac{n+1}{2}}(\Sigma)
  + \dim H^{\frac{n-1}{2}}(\Sigma)
  + \dim I\!H^{\frac{n+1}{2}}(\overline M)
\end{equation*}
holds if and only if $r$ is surjective and $\dim \text{Rg}
(\overline\delta)=\dim {Rg} (\delta)+\dim H^{\frac{n-1}{2}}(\Sigma)$,
this last relation means that $\ker
\overline\delta\subset\iota(H^{\frac{n-1}{2}}(\Sigma))$.


\end{document}